\documentclass[11pt, leqno]{article}
\usepackage{amsmath,amssymb,amsthm, epsfig}
\usepackage{hyperref}
\usepackage{amsmath}
\usepackage[pdftex]{color}
\usepackage{mathtools}
\usepackage{color}
\usepackage{ulem}
\usepackage{mathrsfs}
\usepackage{wasysym}
\usepackage{stackrel}
\usepackage{pgfplots}
\pgfplotsset{compat=newest}

\title{\Large{\bf Regularity to Thin Obstacle Problem in Orlicz spaces}}
\author{\it by \smallskip \\ Junior da Silva Bessa \footnote{\noindent Universidade Estadual de Campinas - UNICAMP. Departamento  de Matemática. Campinas - SP, Brazil. \noindent \texttt{E-mail address: \url{jbessa@unicamp.br}}},\,\,
Paulo Henryque da Costa Silva
\footnote{\noindent Universidade Federal do Cear\'{a}. Departamento de Matemática. Fortaleza - CE, Brazil. \noindent \texttt{E-mail address: \url{paulohenryque@alu.ufc.br}}}\,\, $\&$ \,\, Alan Pio Sousa \footnote{\noindent Universidade Federal do Cear\'{a}. Departamento de Matemática. Fortaleza - CE, Brazil. \noindent \texttt{E-mail address:  \url{alanpio@alu.ufc.br}}}
}

\newlength{\hchng}
\newlength{\vchng}
\def \dist {\mathrm{dist}}

\def \loc {\mathrm{loc}}

\newcommand{\defeq}{\mathrel{\mathop:}=}

\newtheorem{theorem}{Theorem}[section]
\newtheorem{lemma}[theorem]{Lemma}
\newtheorem{proposition}[theorem]{Proposition}

\theoremstyle{definition}

\newtheorem{definition}[theorem]{Definition}

\theoremstyle{remark}
\newtheorem{remark}[theorem]{Remark}

\numberwithin{equation}{section}

\newcommand{\intav}[1]{\mathchoice {\mathop{\vrule width 6pt height 3 pt depth  -2.5pt
\kern -8pt \intop}\nolimits_{\kern -6pt#1}} {\mathop{\vrule width
5pt height 3  pt depth -2.6pt \kern -6pt \intop}\nolimits_{#1}}
{\mathop{\vrule width 5pt height 3 pt depth -2.6pt \kern -6pt
\intop}\nolimits_{#1}} {\mathop{\vrule width 5pt height 3 pt depth
-2.6pt \kern -6pt \intop}\nolimits_{#1}}}

\begin{document}
\maketitle

\begin{abstract}
\noindent In this work, we establish regularity results for minimizers of the energy functional associated with the thin obstacle problem in Orlicz spaces. More precisely, we prove the Lipschitz continuity and the H\"older continuity of the gradient of minimizers. The analysis is based on techniques from De Giorgi's classical regularity theory. As a byproduct of our results, we also provide a characterization of the structure of the nodal sets of the minimizers.

\medskip
\noindent \textbf{Keywords}: Thin obstacle problem; Orlicz spaces; gradient estimates; Lipschitz regularity.
\vspace{0.2cm}
	
\noindent \textbf{AMS Subject Classification: 35B38, 35B65, 35R35, 46E30.}
\end{abstract}

\section{Introduction}

Variational problems featuring nonstandard growth conditions have emerged as a central theme in nonlinear analysis over the past two decades (see, e.g., \cite{rao-ren, harjulehto-hasto, diening-ruzicka}), particularly within the framework of Orlicz and Musielak–Orlicz spaces. These generalized function spaces arise naturally in the modeling of non-linear elasticity and electrorheological fluids \cite{ruzicka, diening-ruzicka}, as well as in image processing applications \cite{harjulehto-hasto}, where the classical quadratic growth associated with Dirichlet energy fails to capture intrinsic physical behavior.

In this work, we investigate the regularity of minimizers to a class of energy functionals of the form
\begin{equation}\label{eq1}
\mathcal{J}(u)=\int_{\mathrm{B}_{1}^{+}}G(|\nabla u(x)|)\,dx,
\end{equation}
where \( G:[0,+\infty)\to[0,+\infty) \) is a \( \mathrm{N} \)-function, over the admissible class
\[ 
\mathcal{G} = \left\{ v \in W^{1,G}(\mathrm{B}_{1}^{+}) : v = \varphi \ \text{on} \ (\partial \mathrm{B}_{1})^{+}, \quad v \geq 0 \ \text{on} \ \mathrm{T}_{1} \right\},
\]
with \( \mathrm{B}_{1}^{+} = \mathrm{B}_{1} \cap \mathbb{R}^{n}_{+} \), \( (\partial \mathrm{B}_{1})^{+} = \partial \mathrm{B}_{1} \cap \mathbb{R}^{n}_{+} \), and \( \mathrm{T}_{1} = \mathrm{B}_{1} \cap \{x_n = 0\} \), under suitable conditions on the function \(\varphi\).

The Euler--Lagrange system associated with the variational problem takes the form
\begin{equation}\label{euler}
\left\{
\begin{array}{rcll}
\Delta_{g} u &=& 0 & \text{in } \mathrm{B}^{+}_{1}, \\
u &=& \varphi & \text{on } (\partial \mathrm{B}_{1})^{+},\\
u &\geq& 0 & \text{on } \mathrm{T}_{1}, \\
\displaystyle \frac{g(|\nabla u|)}{|\nabla u|} u_{x_n} &=& 0 & \text{on } \mathrm{T}_{1} \cap \{ u > 0 \},\\
\displaystyle -\frac{g(|\nabla u|)}{|\nabla u|} \, u_{x_n} &\geq& 0 & \text{on } \mathrm{T}_{1}.
\end{array}
\right.
\end{equation}

This problem is known as the \textit{thin obstacle problem} (also referred to as the \textit{Signorini problem} or the \textit{boundary obstacle problem}), originally studied by the Italian mathematical physicist and civil engineer Antonio Signorini in the context of linear elasticity. It consists of finding the elastic equilibrium of a non-homogeneous, anisotropic elastic body resting on a rigid, frictionless surface and subjected only to its body forces (cf. \cite{Sig33,Sig59}). In other words, this problem can be seen as a variant of the classical obstacle problem, where the obstacle is imposed only on a set of ``lower dimension" (or ``thin", which explains the name of this problem). We recommend the book by Petrosyan et al. \cite{Schag10} for an introduction and further applications of the thin obstacle problem.

In the setting of regularity results for solutions to the Signorini problem, Caffarelli \cite{Caf79} studied the minimization problem \eqref{eq1} with \(G(t)=t^2\) in dimension \(n \ge 2\), obtaining $C^{1,\alpha}$ estimates for the minimizer, where \(\alpha \in (0,1/2]\) (see also \cite{Ura85}). Three decades later, Caffarelli and Athanasopoulos established the optimal, Caffarelli and Athanasopoulos established the optimal \(\displaystyle C^{1,\frac{1}{2}}\) regularity for the Laplace equation via monotonicity formulas in \cite{AthCaf06}. This result builds on the foundational work of Richardson, who first proved the optimal regularity in the two-dimensional case \((n=2)\) in \cite{Ric78}. Along these lines of Hölder gradient estimates in the thin obstacle problem, Andersson and Mikayelyan in \cite{AM} studied the regularity of minimizers of the functional \eqref{eq1} when \(G(t)=t^p\) for \(1<p<\infty\), employing a variant of De Giorgi's regularity theory.  Byun \textit{et al.} in \cite{BLOP} extended these estimates to the \(p(x)\)-Laplacian with variable exponent. 
Moreover, Fern\'andez--Real obtained \(C^{1,\alpha}\) estimates in the non-variational context for fully nonlinear elliptic operators \cite{Xavier}, and together with Ros--Otton studied the nonlocal scenario in \cite{RO18}. Subsequent progress on the thin obstacle problem can be found in \cite{DK25,RJ21,RS20,FS24,HT25,JPG24}.

Motivated by this, our work aims to establish regularity results for minimizers of the functional \eqref{eq1}. Specifically, under suitable hypotheses, we prove that the minimizers belong to the class \(C^{1,\gamma}\) for some \(\gamma \in (0,1)\). To this end, we first establish a Lipschitz regularity result for minimizers and develop a De Giorgi-type regularity theory for variational models with nonstandard growth. The lack of homogeneity in the associated Euler-Lagrange operator leads to substantial conceptual and technical differences from the classical \(p\)-homogeneous setting and requires a distinct analytical approach. In particular, our method extends the De Giorgi framework beyond homogeneous models. Provides a unified treatment that includes, as a special case, the energies \(G(t)=t^p\) considered in the previously cited works.

\subsection{Assumptions and Main Results}

We begin by collecting the basic notations used throughout the paper:

\begin{itemize}
  \item For each 
    $x=(x_1,\dots,x_n)\in\mathbb{R}^n$, we write 
    $x'=(x_1,\dots,x_{n-1})\in\mathbb{R}^{n-1}$.

  \item$\mathrm{B}_r(x)$ denotes the open ball of radius $r>0$ centered at $x \in \mathbb{R}^n$.
In particular, we have $\mathrm{B}_r = \mathrm{B}_r(0)$ and
$\mathrm{B}_r^{+} = \mathrm{B}_r \cap \mathbb{R}^n_{+}$.

  \item $\mathrm T_r=\{(x',0)\in\mathbb{R}^{n-1}\,:\,|x'|<r\}$ 
    is the flat portion of the boundary of $\mathrm B_r^+$.
    More generally, 
    $\mathrm T_r(x_0)=\mathrm T_r + x_0'$ for any $x_0'\in\mathbb{R}^{n-1}$.

  \item $(\partial \mathrm B_r)^+=\partial \mathrm B_r\cap\mathbb{R}^n_+$.

  \item Given a function $u$, its gradient is 
    $\nabla u=(u_{x_1},\dots,u_{x_n})$, and the tangential gradient is 
    $\nabla' u=(u_{x_1},\dots,u_{x_{n-1}})$.
\end{itemize}

We now state the structural conditions under which our main results are obtained:

\begin{itemize}
  \item[{\rm(\textbf{H1})}]\textbf{(Operator structure)} 
    Let $G\colon[0,\infty)\to[0,\infty)$ be an N-function satisfying Lieberman's conditions:
    \begin{itemize}
      \item[(i)] $G'=g$, where $g\in C^0([0,\infty))\cap C^1((0,\infty))$.
      \item[(ii)] There exist constants $0<\delta_0\le g_0$ such that
      \[
        \delta_0 \le \frac{t g'(t)}{g(t)} \le g_0 
        \qquad\forall\,t>0.
      \]
    \end{itemize}

  \item[{\rm(\textbf{H2})}]\textbf{(Data regularity)} 
    The boundary data $\varphi$ satisfies
    \[
      \varphi\in W^{1,G}(\mathrm B_1^+)\cap C^0(\overline{\mathrm B_1^+}),
      \quad\text{and}\quad
      \varphi\ge0 \quad\text{on}\quad \overline{\mathrm B_1^+}\cap\{x_n=0\}.
    \]
\end{itemize}

Regarding the scope of the functions $g$ that satisfy our assumptions, we observe that this class is quite broad. Consider $g(t)= t^a\log(bt+c)$ with $a, b, c>0$ which satisfies {\rm(\textbf{H1})} with $\delta_0=a$ and $g_0=a+1.$ Another interesting example is given by functions of the form $g(t)= at^p+ bt^q$ with $a, b, p, q>0,$ for which $\delta_0=\min\{p,q\}$ and $g_0=\max\{p,q\}.$

Moreover, any linear combination with positive coefficients of functions satisfying \rm(\textbf{H1}) also satisfies \rm(\textbf{H1}). 
In addition, if $g_1$ and $g_2$ satisfy condition \rm(\textbf{H1}) with constants $\delta_0^i$ and $g_0^i$, $i=1,2$, then the function $g = g_1 \cdot g_2$ satisfies \rm(\textbf{H1}) with $\delta_0 = \delta_0^1 + \delta_0^2$ and $g_0 = g_0^1 + g_0^2$, and the function $g(t) = g_1(g_2(t))$ satisfies \rm(\textbf{H1}) with $\delta_0 = \delta_0^1 \cdot \delta_0^2$ and $g_0 = g_0^1 \cdot g_0^2$.

Our main result is summarized in the following theorem.

\begin{theorem}\label{T1}
Assume the structural conditions {\rm(\textbf{H1})} and {\rm(\textbf{H2})} hold. Let \(u \in W^{1,G}(\mathrm B_1^+)\)
be a minimizer of the functional \eqref{eq1} over the admissible class \(\mathcal G\). Then there exists \(\gamma\in(0,1)\), depending only on \(n\), \(\delta_0\), and \(g_0\), such that \(u \in C^{1,\gamma}(\overline{\mathrm B_{1/2}^+})\).
\end{theorem}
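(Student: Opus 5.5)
The proof follows the Andersson--Mikayelyan scheme for $G(t)=t^p$, adapted to the nonhomogeneous Orlicz structure in {\rm(\textbf{H1})}. It has three stages: a Lipschitz bound, Hölder continuity of the tangential derivatives $\nabla' u$ via De Giorgi, and control of $u_{x_n}$ near $\mathrm{T}_{1}$ via the complementarity conditions in \eqref{euler}, followed by a Campanato iteration.

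\textbf{Step 1 (Lipschitz estimate).} We will show $\|\nabla u\|_{L^\infty(\mathrm B_{3/4}^+)}\le C$. Interior estimates come from Lieberman's regularity for $\Delta_g u=0$, which gives $C^{1,\beta}$ in $\mathrm B_1^+$ away from $\mathrm{T}_1$. Near $\mathrm{T}_1$ we use tangential difference quotients $u(x+he_i)$, $i<n$. These are admissible, since the constraint $v\ge0$ on $\mathrm{T}_1$ is translation invariant in $x'$. Comparing $\mathcal J(u)$ with $\mathcal J(\min/\max$ of $u$ and its translate$)$ gives Caccioppoli inequalities for $(u_{x_i}-k)_\pm$ with weight $g(|\nabla u|)/|\nabla u|$. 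The bound on $u_{x_n}$ then comes from the equation. Away from the contact set this is a Neumann problem. On $\{u=0\}\cap\mathrm T_1$ we have $-u_{x_n}\ge0$, and $u\ge0$ there, so $u_{x_n}$ is controlled by barriers: compare with $\Delta_g$-harmonic functions, using the comparison principle for $\Delta_g$, which holds under {\rm(\textbf{H1})}.

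\textbf{Step 2 (De Giorgi for $\nabla' u$).} Once $|\nabla u|\le M$, the operator $a_{ij}=\partial_{\xi_j}\big(g(|\xi|)\xi_i/|\xi|\big)$ is bounded above by $C\,g(|\nabla u|)/|\nabla u|$. It is also bounded below by $\delta_0\,g(|\nabla u|)/|\nabla u|$, but this is only a degenerate lower bound. So we split into two alternatives in each ball $\mathrm B_r^+$:
\begin{itemize}
\item[(a)] $|\nabla u|\ge \mu/4$ on most of the ball, where $\mu=\sup_{\mathrm B_r^+}|\nabla u|$. Then the equation for $v=u_{x_i}$ is uniformly elliptic with ellipticity ratio depending only on $\delta_0,g_0$. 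The homogeneity of $p$-Laplacian arguments is replaced by the comparison $\min(s^{\delta_0},s^{g_0})g(t)\le g(st)\le\max(s^{\delta_0},s^{g_0})g(t)$.
\item[(b)] $|\nabla u|$ is small on a fixed fraction of the ball. Then De Giorgi's measure-to-pointwise lemma gives $\sup_{\mathrm B_{r/2}^+}|\nabla u|\le(1-\eta)\mu$.
\end{itemize}
The Caccioppoli inequalities from Step 1, with the test functions $(u_{x_i}-k)_\pm\zeta^2$, which are admissible tangentially, feed the De Giorgi iteration. Iterating the two alternatives yields an oscillation decay $\operatorname{osc}_{\mathrm B_{r}^+}\nabla' u\le Cr^{\gamma}$.

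\textbf{Step 3 (Normal derivative and conclusion).} On $\mathrm{T}_1$ we have $u_{x_n}\,u=0$, and $u_{x_n}\le0$ (up to the positive factor). We show that $u_{x_n}^-$ is a subsolution-type quantity and that $u_{x_n}^+$ vanishes on the contact set. At contact points we have $u=0$ and $\nabla' u=0$, so a blow-up and compactness argument gives decay of $\operatorname{osc}u_{x_n}$. At non-contact points the problem is Neumann, and $u$ extends by even reflection to a $\Delta_g$-harmonic function, for which Lieberman's $C^{1,\beta}$ applies. Combining the two cases by the standard dichotomy on the distance to the free boundary gives $\nabla u\in C^{0,\gamma}(\overline{\mathrm B_{1/2}^+})$.

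\textbf{Main obstacle.} The main obstacle is Step 2 in the degenerate regime. For non-homogeneous $g$ the rescaled functions $u_r(x)=u(rx)/(r\mu)$ solve equations with $g_r(t)=g(\mu t)/g(\mu)$. These still satisfy {\rm(\textbf{H1})} with the same $\delta_0,g_0$, and this scale invariance of the class is what I would exploit to make every constant uniform.
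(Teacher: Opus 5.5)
\paragraph{Gap in Step 2.} Your Step~2 does not work. The energy inequalities for $(u_{x_i}-k)_\pm$ that it relies on hold only for one sign of $k$.

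Take your own min/max comparison, with $u_h=u(\cdot+he_i)$. The competitor $\max(u,u_h-kh)$ for $u$ is admissible for every $k$. The partner competitor $\min(u_h,u+kh)$ for $u_h$ is nonnegative on $\mathrm{T}_1$ only if $k\ge 0$, because at a contact point $u=0$. The negative part behaves symmetrically.

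Put differently, the differentiated equation for $v=u_{x_i}$ holds only where $\tilde u$ is $g$-harmonic. Since $v=0$ on $\{u=0\}\cap\mathrm{T}_1$, you get Caccioppoli inequalities only for $(v-k)^+$ with $k>0$ and for $(v-k)^-$ with $k<0$. This is exactly how the paper sets things up: its test functions are supported in $\{\tilde v\ge k\}$. There $k\le|\nabla u|\le \mathrm{C}_1\|u\|_{L^\infty}$, which also removes the degeneracy of $g(|\nabla u|)/|\nabla u|$, so no degenerate/non-degenerate dichotomy is needed.

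One-sided inequalities cannot drive a two-sided oscillation iteration. Alternative (a) needs $(u_{x_i}-k)^-$ at $k\approx\mu/2>0$. Alternative (b) needs a De~Giorgi inequality for $u_{x_n}$, and there is none, since the even reflection of $u_{x_n}$ jumps across the contact set. Note that for $G(t)=t^2$ alternative (a) always holds, yet Hölder continuity of $\nabla' u$ across the free boundary is the whole content of Caffarelli's theorem. The real obstruction is the thin constraint, not degeneracy, so your ``main obstacle'' paragraph points at the wrong difficulty.

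\paragraph{What the paper does instead.} It never proves oscillation decay of $\nabla'u$. It proves only
\[
\sup_{\mathrm{B}^+_\rho(x_0)}|\nabla u|\le \mathrm{C}\rho^\beta
\]
at free boundary points $x_0\in\partial\{u=0\}\cap\{x_n=0\}$. All other points are handled by even reflection (non-contact part) or odd reflection (interior of the contact set), plus interior $C^{1,\alpha}$ estimates and the distance-to-free-boundary dichotomy. The decay rests on two compactness statements (Lemma~\ref{Lch}) that your proposal lacks.

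(i) Normalize $\sup_{\mathrm{B}^+_{1/2}}|\nabla u|=1$ at a free boundary point. Then $\sup|\nabla' u|\ge \mathrm{C}_5$. Otherwise a limit is $\mathrm{D}x_n$, and the sign condition forces $\mathrm{D}<0$. That is incompatible with $0$ being a free boundary point of $u^k$. This is the mechanism that controls $u_{x_n}$ by the tangential derivatives. Your ``blow-up gives decay of $\operatorname{osc}u_{x_n}$'' does not identify it, and at interior contact points $u_{x_n}$ need not decay at all.

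(ii) If $\mathrm{M}_i=\sup u_{x_i}\ge \mathrm{C}_5/\sqrt{n-1}$, then $u_{x_i}$ stays quantitatively away from $\mathrm{M}_i$ in $L^2(\mathrm{B}^+_{1/2})$. Otherwise the limit has $\partial_i u^0\equiv \mathrm{M}_i>0$, while $u^0\ge 0$ on $\mathrm{T}$ and $u^0(0)=0$. This is impossible, since $x'=0$ minimizes $u^0(x',0)$.

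Statement (ii) supplies the measure hypothesis for the one-sided De~Giorgi lemma. That lemma gives
\[
\sup_{\mathrm{B}^+_{\rho/4}}(\pm u_{x_i})\le(1-\lambda)\sup_{\mathrm{B}^+_{\rho/2}}(\pm u_{x_i})
\]
for a good index and sign. Statement (i), combined with an iteration lemma over the $2n-2$ quantities $\sup(\pm u_{x_i})$ (Lemma~\ref{aux}), converts this into decay of $|\nabla u|$. Without (i), (ii) and this iteration, your Steps~2--3 do not yield $C^{1,\gamma}$.

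\paragraph{Step 1.} This also differs from the paper. The paper reflects evenly and treats $\tilde u$ as a solution of a classical obstacle problem whose obstacle is piecewise $C^{1,\alpha}$ and vanishes on $\mathrm{T}_1$, then uses comparison and Harnack. Your barrier bound for $u_{x_n}$ is too sketchy to check, but that is secondary to the gap above.
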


We emphasize that, although our manuscript was partially inspired by the works \cite{AM,BLOP}, the presence of a nonhomogeneous operator associated with the functional, as well as the generality of the underlying function spaces, required a substantially different and nontrivial adaptation of the methods, which are not directly covered in the aforementioned references.

Due to the structure of the functional \eqref{eq1}, our results extend those in \cite{AM} and \cite{Caf79}, and, in certain contexts, draw a parallel with \cite{BLOP}, \cite{Ric78}, \cite{RO18}, and \cite{Xavier} within the line of research concerning regularity for solutions to thin obstacle problems.

\subsection{An application: Structure of the nodal sets}

As an application of our result, we analyze the structure of the nodal sets associated with minimizers of the functional \eqref{eq1}.  Specifically, for a solution $u$ to the thin obstacle problem \eqref{euler}, we define the nodal set of order $k \ge 1$ by
$$
\mathfrak{n}_{k}(u) = \left\{ x \in \overline{\mathrm{B}^{+}_{1/2}} \,\middle|\, D^\alpha u(x) = 0 \text{ for all } |\alpha| < k, \text{ and exist}  \ |\beta| = k \text{ such that } D^\beta u(x) \neq 0 \right\},
$$
where \(D^\alpha u(x)\) denotes the partial derivative of \(u\) at \(x\) associated to the multi-index \(\alpha\).

Nodal sets play a central role in describing the singular set in free boundary problems.  In the case $G(t)=t^2$, their structure has been extensively studied using monotonicity formulas for the Laplacian (see e.g.\ \cite[Chapter 9]{Schag10}).  More generally, Han proved in \cite{Han} that for solutions of linear elliptic equations of the form
\[
\mathcal L[u]=\sum_{|\nu|\le m}a_\nu(x)D^\nu u=f(x)\quad\text{in }\mathrm B_1,
\]
the nodal sets are \(C^{1,\beta}\)-manifolds for some \(\beta\in(0,1)\). In this direction, recently, Lian \cite{Lian} investigated the structure of nodal sets for operators in both divergence and non-divergence form within the parabolic framework.

From this perspective, using the estimates obtained from Theorem~\ref{T1}, we obtain a characterization of the set $\mathfrak{n}_{1}(u)$. The proof is analogous to \cite[Theorem 5.1]{Han}, and for this reason we omit it here.

\begin{theorem}[{\bf Structure of Nodal set}]\label{T2}
Under the same assumptions of Theorem \ref{T1} we have that
\[
\mathfrak{n}_{1}(u)=\bigcup_{j=0}^{n}\mathfrak{M}_{j},
\]
where each \(\mathfrak{M}_{j}\) is on a finite union of \(j\)-dimensional \(C^{1,\gamma}\)- manifolds, for some \(\gamma\in(0,1)\).
\end{theorem}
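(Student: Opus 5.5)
We would follow Han's strategy \cite[Theorem 5.1]{Han}, using the $C^{1,\gamma}(\overline{\mathrm B_{1/2}^+})$ regularity of Theorem~\ref{T1} as the only analytic input. By definition, $\mathfrak n_1(u)$ is the set of points $x$ where $u(x)=0$ and $\nabla u(x)\neq 0$. The plan is to stratify $\mathfrak n_1(u)$ by the rank of the local zero-set structure, i.e.\ by how many independent $C^{1,\gamma}$ defining equations are available near each point. Then, at each point, we apply the implicit function theorem in the $C^{1,\gamma}$ category. The $C^{1,\gamma}$ version is standard: if $F\in C^{1,\gamma}$ and $\partial_{x_i}F(x_0)\neq 0$, the implicit function is $C^{1,\gamma}$, because its derivative is a Hölder quotient of derivatives of $F$.

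\textbf{Interior points.} First take $x_0\in\mathfrak n_1(u)$ with $x_{0,n}>0$. Then $\nabla u(x_0)\neq0$, and by continuity of $\nabla u$ the set $\{u=0\}$ is, in a small ball around $x_0$, the graph of a $C^{1,\gamma}$ function of $n-1$ variables. These points therefore belong to $\mathfrak M_{n-1}$. Note also that $\Delta_g u=0$ is uniformly elliptic near such points (since $g(|\nabla u|)/|\nabla u|$ is bounded above and below there by (\textbf{H1})), but we do not need this.

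\textbf{Points on $\mathrm T_{1/2}$.} Now take $x_0\in\mathfrak n_1(u)\cap\{x_n=0\}$, where $u\ge0$ on the thin space. Since $u$ restricted to $\mathrm T$ attains a minimum $0$ at $x_0'$, we get $\nabla' u(x_0)=0$, so $u_{x_n}(x_0)\neq0$. The complementarity conditions in \eqref{euler} then force $u_{x_n}(x_0)<0$. Extending $u$ to a $C^{1,\gamma}$ function across $\{x_n=0\}$ (for instance by Whitney extension of the one-sided $C^{1,\gamma}$ data), the implicit function theorem gives $\{u=0\}$ near $x_0$ as a $C^{1,\gamma}$ graph $x_n=h(x')$.

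Intersecting this graph with $\overline{\mathbb R^n_+}$ and with $\mathrm T$, we classify points according to the dimension of the piece of the nodal set they lie on. The pieces are:
\begin{itemize}
\item the $(n-1)$-dimensional part lying in $\{x_n>0\}$;
\item the part $\{h=0\}$ lying in $\mathrm T$, which is the contact set.
\end{itemize}
The contact set is decomposed further, by the rank of $\nabla' h$ and its iterates, into lower-dimensional $C^{1,\gamma}$ manifolds $\mathfrak M_j$, $0\le j\le n-1$. Here $\mathfrak M_n$ is empty, or consists of the part of $\mathrm T$ where $u\equiv0$, taken in a relative sense. This yields the decomposition $\mathfrak n_1(u)=\bigcup_{j=0}^n\mathfrak M_j$. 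Compactness of $\overline{\mathrm B_{1/2}^+}$ reduces the resulting local charts to finitely many, which gives the "finite union" assertion.

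\textbf{Main obstacle.} The hardest step is the boundary step: justifying the $C^{1,\gamma}$ extension across $\mathrm T$ and obtaining finiteness of the stratification of the contact set. For the latter, $h$ is merely $C^{1,\gamma}$, so we cannot iterate Taylor expansions as Han does with higher derivatives. We would try to handle this by restricting to the order-one stratum, which is all $\mathfrak n_1$ requires, so that the implicit function theorem suffices. Finiteness would then come from a covering argument rather than from analyticity.
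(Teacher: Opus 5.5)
Your core argument is correct, and it is what Han's Theorem 5.1 reduces to when $k=1$; the paper itself gives no proof and only cites Han. At interior points you apply the implicit function theorem to $u$. At $x_0\in\mathrm{T}$ you correctly get $\nabla'u(x_0)=0$ and $u_{x_n}(x_0)\neq 0$. After extending $u$ to a $C^{1,\gamma}$ function $U$ across $\{x_n=0\}$, you obtain a $C^{1,\gamma}$ graph $\{x_n=h(x')\}$ containing $\{u=0\}$ near $x_0$. Two simplifications are available. The extension needs no Whitney machinery: $U(x',x_n):=2u(x',0)-u(x',-x_n)$ for $x_n<0$ has matching one-sided gradients on $\{x_n=0\}$ and is $C^{1,\gamma}$. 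And the sign $u_{x_n}(x_0)<0$ is not needed; $u_{x_n}(x_0)\neq 0$ suffices, and the sign only tells you $h\ge 0$. At this point the proof is already finished.

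The steps you add afterwards are where things go wrong. The stratification of the contact set ``by the rank of $\nabla'h$ and its iterates'' cannot be carried out. Since $h\ge 0$ vanishes on the contact set, $\nabla'h\equiv 0$ there, and a $C^{1,\gamma}$ function has no further derivatives to iterate. Moreover, the contact set can have nonempty interior in $\mathrm{T}$ and a free boundary that need not be a manifold. No decomposition is needed anyway, because the contact points of $\mathfrak{n}_1(u)$ already lie on the $(n-1)$-dimensional graph of $h$.

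Likewise, finiteness does not follow from compactness of $\overline{\mathrm{B}^+_{1/2}}$. The set $\mathfrak{n}_1(u)$ is not closed: it accumulates at free boundary points where $\nabla u=0$, and the implicit-function neighborhoods shrink as $|\nabla u|\to 0$. In Han's argument, finiteness comes instead from using finitely many defining functions $D^\alpha u$, $|\alpha|=k-1$, each of whose regular zero sets is a single embedded manifold. For $k=1$ the only such function is $u$ itself. The clean conclusion is
\[
\mathfrak{n}_1(u)\subset\{U=0\}\cap\{\nabla U\neq 0\},
\]
which is a single embedded $C^{1,\gamma}$ hypersurface of the open set $\{\nabla U\neq 0\}$. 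Hence $\mathfrak{M}_{n-1}=\mathfrak{n}_1(u)$ and $\mathfrak{M}_j=\emptyset$ for $j\neq n-1$. In Han's terms, the leading polynomial $\nabla u(x_0)\cdot(x-x_0)$ always has an $(n-1)$-dimensional invariant subspace.

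Finally, placing ``the part of $\mathrm{T}$ where $u\equiv 0$'' in $\mathfrak{M}_n$ is a dimension slip. That set lies in the hyperplane $\mathrm{T}$ and belongs to $\mathfrak{M}_{n-1}$.
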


The remainder of the paper is organized as follows: Section \ref{Section2} is devoted to some definitions and preliminary results concerning the g-Laplacian operator spaces. In Section \ref{Section3}, we explore some properties of the minimizers, establishing the Lipschitz Regularity for the thin obstacle problem. Next, in Section \ref{Section4} we develop some regularity results in the context of De Giorgi's theory for the \(g\)-Laplacian. Finally, Section \ref{Section5} contains the proof of Theorem \ref{T1}.

\section{Preliminaries}\label{Section2}

We begin the section by recalling the definition and some elementary properties of the main class of functions considered in this work.

\begin{definition}\label{weightedorliczspaces}
The \textit{Orlicz space} $L^{G}(E)$ for be an $\mathrm{N}$-function $G$ and Lebesgue measurable set $E\subset \mathbb{R}^{n}$ is the set of all measurable functions $h$ in $E$ such that
\begin{eqnarray*}
\rho_{G}(h):=\int_{E}G(|h(x)|)dx<+\infty.
\end{eqnarray*}
In \(L^{G}(E)\) we can define the following Luxemburg norm
\begin{eqnarray*}
\|h\|_{L^{G}(E)}=:\inf\left\{t>0:\rho_{G}\left(\frac{h}{t}\right)\leq 1\right\}.
\end{eqnarray*}
Furthermore, the \textit{Orlicz-Sobolev space} $W^{k,G}(E)$ (for an integer $k\geq0$) is the set of all measurable functions $h$ in $E$ such that all distributional derivatives $D^{\alpha}h$, for multiindex $\alpha$ with length $|\alpha|=0,1,\cdots,k$ also belong to $L^{G}(E)$ with norm is given by
\begin{eqnarray*}
\|h\|_{W^{k,G}(E)}\defeq \sum_{j=1 }^{k}\|D^{j}h\|_{L^{G}(E)}.
\end{eqnarray*} 
\end{definition}
In our setting, the Lieberman's conditions in {\bf(H1)} implies that the \(L^{G}(E)\) and \(W^{k,G}(E)\) are the Banach reflexive spaces (see \cite[Theorem 2.1]{MW}).

Now, we list some basic properties on the Orlicz spaces.

\begin{lemma}[{\bf\cite[Lemma 2.3]{MW}}]\label{inclusion}
Let \(u\in L^{G}(\Omega)\) and assume that {\rm ({\bf H1})} holds. Then, there exists a positive constant \(\mathrm{C}=\mathrm{C}(\delta_{0},g_{0})\) such that 
\[
||u||_{L^G(\Omega)}\leq \mathrm{C}\max\left\{\rho_{G}(u)^{\frac{1}{1+\delta_{0}}},\rho_{G}(u)^{\frac{1}{1+g_{0}}}\right\}.
\]
\end{lemma}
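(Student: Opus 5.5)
The plan is to compare $G(st)$ with $G(t)$ for $s\le 1$ and $s\ge 1$ using (H1), and then evaluate the modular at a well-chosen scaling $t$. From $\delta_0\le tg'(t)/g(t)\le g_0$ we first obtain $1+\delta_0\le tg(t)/G(t)\le 1+g_0$. To see this, integrate $(tg(t))'=g(t)+tg'(t)$, which lies between $(1+\delta_0)g(t)$ and $(1+g_0)g(t)$, from $0$ to $t$. This uses $tg(t)\to 0$ as $t\to0$, which follows from $g$ continuous with $g(0)=0$ for an N-function.

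Dividing by $G$ and integrating $\log G$ between $t$ and $st$ then gives the standard two-sided bounds
\[
\min\{s^{1+\delta_0},s^{1+g_0}\}\,G(t)\le G(st)\le \max\{s^{1+\delta_0},s^{1+g_0}\}\,G(t),\qquad s,t\ge0.
\]
Applied pointwise with $t=|u(x)|$ and integrated, this yields
\[
\rho_G(u/\lambda)\le \max\{\lambda^{-1-\delta_0},\lambda^{-1-g_0}\}\,\rho_G(u)\qquad\text{for every }\lambda>0.
\]

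Now set $\rho=\rho_G(u)$; if $\rho=0$ then $u=0$ a.e. and there is nothing to prove. We split into two cases. If $\rho\ge1$, choose $\lambda=\rho^{1/(1+\delta_0)}\ge1$. Then $\max\{\lambda^{-1-\delta_0},\lambda^{-1-g_0}\}=\lambda^{-1-\delta_0}=\rho^{-1}$, so $\rho_G(u/\lambda)\le1$ and hence $\|u\|_{L^G}\le\rho^{1/(1+\delta_0)}$. If $\rho<1$, choose $\lambda=\rho^{1/(1+g_0)}<1$. Then the max is $\lambda^{-1-g_0}=\rho^{-1}$, giving $\|u\|_{L^G}\le\rho^{1/(1+g_0)}$.

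In both cases $\|u\|_{L^G}\le\max\{\rho^{1/(1+\delta_0)},\rho^{1/(1+g_0)}\}$, so the statement holds with $\mathrm C=1$. The only delicate point is the integrated growth comparison: the lower bound on $tg'/g$ is needed to pass to $tg/G$, and one must check the boundary term at $0$.
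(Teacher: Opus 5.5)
Your proof is correct and is essentially the standard argument behind the cited Lemma 2.3 of Mart\'inez--Wolanski; the paper itself does not reprove this lemma. That argument also derives the two-sided power growth $\min\{s^{1+\delta_0},s^{1+g_0}\}G(t)\le G(st)\le \max\{s^{1+\delta_0},s^{1+g_0}\}G(t)$ from (\textbf{H1}), then tests the Luxemburg norm at $\lambda=\rho_G(u)^{1/(1+\delta_0)}$ or $\lambda=\rho_G(u)^{1/(1+g_0)}$, and your careful bookkeeping even gives the constant $\mathrm{C}=1$.
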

The next result is a Poincaré-type inequality in the context of Orlicz-Sobolev spaces
\begin{lemma}[{\bf\cite[Lemma 2.4]{MW}}]\label{poincareinequality}
Let \(u\in W^{1,G}(\Omega)\) such that \(u=0\) on \(\partial \Omega\) and assume that {\rm({\bf H1})} is valid. Then, there exists a positive constant \(\mathrm{C}\) depending only on \(\operatorname{diam}(\Omega)\) such that 
\[
\rho_{G}(u)\leq \rho_{G}(\mathrm{C}|\nabla u|)
\]
\end{lemma}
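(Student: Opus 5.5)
The Poincaré-type inequality in Lemma~\ref{poincareinequality} follows from the classical one-dimensional argument for functions vanishing on the boundary, combined with Jensen's inequality for the convex function $G$. No growth condition beyond convexity and $G(0)=0$ is really needed; (\textbf{H1}) only ensures that $W^{1,G}$ behaves well under approximation. By density of $C_c^\infty(\Omega)$ in $W^{1,G}_0(\Omega)$, which holds under the $\Delta_2$-condition implied by $g_0<\infty$ in (\textbf{H1}), and by Fatou's lemma, I first reduce to $u\in C_c^\infty(\Omega)$. I extend $u$ by zero outside $\Omega$.

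Set $d=\operatorname{diam}(\Omega)$ and choose coordinates so that $\Omega\subset\{a<x_1<a+d\}$. For $x\in\Omega$ I write
\[
u(x)=\int_a^{x_1}\partial_1 u(s,x_2,\dots,x_n)\,ds ,
\]
so that
\[
|u(x)|\le \frac{1}{d}\int_a^{a+d} d\,|\nabla u(s,\bar x)|\,ds,
\]
where $\bar x=(x_2,\dots,x_n)$. The right-hand side is an average over an interval of length $d$ of the function $d|\nabla u|$. Since $G$ is increasing and convex, Jensen's inequality gives
\[
G(|u(x)|)\le \frac1d\int_a^{a+d} G\bigl(d\,|\nabla u(s,\bar x)|\bigr)\,ds .
\]
I then integrate in $x_1$ over $(a,a+d)$, which cancels the factor $1/d$, and afterwards in $\bar x$. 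By Fubini this yields
\[
\rho_G(u)\le \int_\Omega G(d\,|\nabla u|)\,dx=\rho_G(\mathrm{C}|\nabla u|)
\]
with $\mathrm{C}=d=\operatorname{diam}(\Omega)$.

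The only delicate step is the approximation for a general $u\in W^{1,G}$ with zero trace. I need $u_k\to u$ in $W^{1,G}$, with $u_k$ smooth and compactly supported, together with convergence of the modulars. The right-hand modular converges because $\Delta_2$ together with norm convergence gives modular convergence. On the left-hand side I pass to an a.e.\ convergent subsequence and apply Fatou's lemma, which is enough for the inequality. For this I use the standard density of smooth compactly supported functions in $W^{1,G}_0$ for $\Delta_2$ N-functions, which holds for reflexive $L^G$ by \cite[Theorem 2.1]{MW}. I read ``$u=0$ on $\partial\Omega$'' as membership in $W^{1,G}_0(\Omega)$.
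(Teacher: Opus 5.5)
Your proof is correct. The paper gives no argument for this lemma and simply imports it from \cite[Lemma 2.4]{MW}. Your approach is the classical Gossez-type argument behind such modular Poincar\'e inequalities: integrate $\partial_1 u$ along lines from the edge of a slab of width $d=\operatorname{diam}(\Omega)$ containing $\Omega$, apply Jensen's inequality to the increasing convex $G$ with the probability measure $ds/d$, then use Fubini. It even gives the explicit constant $\mathrm{C}=\operatorname{diam}(\Omega)$, and it shows that (\textbf{H1}) enters only through the $\Delta_2$-condition used in the approximation.

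One small correction: density of $C_c^\infty(\Omega)$ in $W^{1,G}_0(\Omega)$ is either the definition of that space (as a norm closure) or a standard consequence of $\Delta_2$. It does not come from the reflexivity statement in \cite[Theorem 2.1]{MW}.

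You could also avoid the Orlicz-space approximation altogether. The zero extension of $u$ lies in $W^{1,1}(\mathbb{R}^n)$, so it is absolutely continuous on almost every line parallel to the $x_1$-axis and vanishes for $x_1\le a$. This gives your pointwise bound for a.e.\ $x$ directly, so the modular-convergence and Fatou steps become unnecessary.
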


\begin{remark}\label{trace}
The condition $v\geq 0$ on \(\mathrm{B}_{1}\cap \{x_{n}=0\}\) in the class $\mathcal{G}$ should be understood in the sense of the trace. Indeed, by Theorem 2.2 in \cite{MW} we have the continuous embedding $W^{1,G}(\mathrm{B}_{1}^+)\hookrightarrow W^{1,1+\delta_{0}}(\mathrm{B}_{1}^+).$ Moreover, by the Trace Compactness Theorem (Theorem 6.2 in \cite{Necas}), here exists a constant $r\geq1$ depending on $n$ and $\delta_0,$  such that the trace operator $T:W^{1,1+\delta_{0}}(\mathrm{B}_{1}^+)\to L^{r}(\partial \mathrm{B}_{1}^+)$ is compact. Therefore, the condition, $v\geq 0$ on $\mathrm{T}_{1}$ means that $T(v)(x)\geq 0$ for almost every point in $\mathrm{T}_{1}.$ 
\end{remark}

\subsection{g-Laplacian}

Associated with our functional \(\mathcal J\), we consider the classical \(g\)-Laplacian operator defined by
\begin{eqnarray}\label{glaplacian}
\Delta_{g}u=:\operatorname{div}\left(\frac{g(|\nabla u|)}{|\nabla u|}\nabla u\right).
\end{eqnarray} 
In this subsection, we recall its main properties and introduce the notion of weak solutions.
\begin{definition}
A function \(u\in W^{1,G}_{loc}(\Omega)\) is said a weak subsolution (respectively, supersolution)  of the equation
\[
\Delta_{g}u=0\,\,\, \text{in}\,\,\, \Omega,
\]
where \(\Omega\subset \mathbb{R}^{n}\) is an open set, if for any function \(0\leq\phi\in C^{\infty}_{0}(\Omega)\), we have
\[
\int_{\Omega}\frac{g(|\nabla u|)}{|\nabla u|}\nabla u\cdot\nabla\phi\  dx\leq 0 \, \left(\text{resp.} \int_{\Omega}\frac{g(|\nabla u|)}{|\nabla u|}\nabla u\cdot\nabla\phi\  dx\geq 0\right).
\]

Moreover, we say that \(u\in W^{1,G}_{\loc}(\Omega)\) is a weak solution of \(\Delta_{g}u=0\) (or \(u\) is a \(g\)-harmonic function) if \(u\) is weak sub and super solution.
\end{definition}
\begin{remark}
Under Lieberman's conditions (see {\bf (H1)}), the function \(g\) satisfies a two-sided growth control:
\[
\min\{t^{\delta_0},t^{g_0}\}\,g(s)\;\le\;g(ts)\;\le\;\max\{t^{\delta_0},t^{g_0}\}\,g(s),
\]
for all \(t,s>0\).
\end{remark}

\begin{remark}[Scaling and normalization]\label{scaling} The following results can be found in \cite[Remark 2.1]{JSousa}. 
Let \(u\) be a weak solution of \(\Delta_g u = 0\) in \(\mathrm B_\rho\). Define the rescaled function
\[
u_\rho(x) = \frac{u(\rho x)}{\rho}.
\]
Then \(u_\rho\) is weakly \(g\)-harmonic in \(\mathrm B_1\). Similarly, for a constant \(K>0\), set
\[
u^*(x)=\frac{u(x)}{K},\qquad
g^*(t)=\frac{g(Kt)}{g(K)}.
\]
One verifies that \(u^*\) satisfies \(\Delta_{g^*}u^*=0\) in \(\mathrm B_1\), where the N-function
\(G^*\), defined by \((G^*)' = g^*\), still obeys {\bf(H1)} with the same constants \(\delta_0\) and \(g_0\).
\end{remark}

\section{Some facts about thin obstacle problem in Orlicz spaces}\label{Section3}

In this section, we present a review of the minimizers of the functional \(\mathcal{J}\) in the admissible class \(\mathcal{G}\). More precisely, we establish the existence of minimizers in this class and develop some of their properties, in particular the Lipschitz regularity of minimizers.

We begin by stating a fundamental property of minimizers of the functional \(\mathcal{J}\), namely, that they are \(g\)-harmonic functions. The proof of this result is inspired by the ideas in \cite{MW} concerning minimizers in Orlicz spaces and is included here for the sake of completeness.

\begin{proposition}\label{prop3.2}
Let \(u\in W^{1,G}(\mathrm{B}^{+}_{1})\) be a minimizer of \(\mathcal{J}\) in \(\mathcal{G}\). Then \(u\) is a \(g\)-harmonic in \(\mathrm{B}^{+}_{1}\).
\end{proposition}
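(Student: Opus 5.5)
The plan is the standard first-variation argument, using test functions supported away from the thin set $\mathrm{T}_1$ and from $(\partial\mathrm{B}_1)^+$.

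\textbf{Admissibility of the perturbations.} Fix $\phi\in C^\infty_0(\mathrm{B}^+_1)$, not necessarily signed, and $t\in\mathbb{R}$. Since $\operatorname{supp}\phi$ is compact in the open half ball, $\phi$ vanishes near $(\partial\mathrm{B}_1)^+$ and near $\mathrm{T}_1$. Hence $u+t\phi$ has the same boundary values $\varphi$ and the same trace on $\mathrm{T}_1$ (in the sense of Remark \ref{trace}). Moreover $u+t\phi\in W^{1,G}(\mathrm{B}^+_1)$, because $W^{1,G}$ is a vector space under (\textbf{H1}). So $u+t\phi\in\mathcal{G}$ for every $t$, and $i(t):=\mathcal{J}(u+t\phi)$ has a minimum at $t=0$.

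\textbf{Differentiating under the integral.} We want $i'(0)$ to equal
\[
\int_{\mathrm{B}^+_1}\frac{g(|\nabla u|)}{|\nabla u|}\nabla u\cdot\nabla\phi\,dx .
\]
Pointwise, $\frac{d}{dt}G(|\nabla u+t\nabla\phi|)=\frac{g(|\nabla u+t\nabla\phi|)}{|\nabla u+t\nabla\phi|}(\nabla u+t\nabla\phi)\cdot\nabla\phi$, which is valid also where the gradient vanishes because $g(0)=0$ and $G\in C^1$. For $|t|\le1$ this is bounded by $g(|\nabla u|+|\nabla\phi|)\,|\nabla\phi|$. Since $\nabla\phi$ is bounded, we need $g(|\nabla u|+\|\nabla\phi\|_\infty)\in L^1(\mathrm{B}^+_1)$. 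This follows from (\textbf{H1}):
\begin{itemize}
\item the growth control in the Remark gives $g(a+b)\le C(g(a)+g(b))$;
\item integrating $tg'(t)\ge\delta_0 g(t)$ gives $tg(t)\le (1+g_0)G(t)$, and together with monotonicity $g(t)\le C(G(t)+1)$ on a bounded domain (split into $t\le1$ and $t>1$);
\item therefore $g(|\nabla u|)\in L^1$, since $\rho_G(\nabla u)<\infty$.
\end{itemize}
By the mean value theorem and dominated convergence, $i$ is differentiable and $i'(0)$ is the integral above.

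\textbf{Conclusion.} Minimality at an interior point gives $i'(0)=0$ for every $\phi\in C^\infty_0(\mathrm{B}^+_1)$. Restricting to $\phi\ge0$, $u$ is simultaneously a weak sub- and supersolution, so it is $g$-harmonic in $\mathrm{B}^+_1$. Since $u\in W^{1,G}(\mathrm{B}^+_1)\subset W^{1,G}_{loc}$, the definition applies.

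\textbf{Main obstacle.} The only delicate step is the integrable domination in the second step, which needs the $\Delta_2$-type consequences of Lieberman's condition. Everything else is routine.
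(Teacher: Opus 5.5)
Your argument is correct and is essentially the paper's first-variation proof: the paper uses nonnegative $\eta$ and two one-sided competitors, $u+t\eta$ and $u-\varepsilon\eta$, handling the second via convexity. You instead observe that any signed $\phi\in C^\infty_0(\mathrm{B}^+_1)$ leaves the data on $\mathrm{T}_1\cup(\partial\mathrm{B}_1)^+$ untouched, so the two-sided derivative vanishes, and you supply the integrable domination that the paper leaves implicit. One small slip: $tg(t)\le(1+g_0)G(t)$ comes from integrating the upper bound $tg'(t)\le g_0 g(t)$ (so $(tg)'\le(1+g_0)g$), not the lower bound $tg'(t)\ge\delta_0 g(t)$; the inequality itself is true and the rest of your argument is unaffected.
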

\begin{proof}
Consider \(0\leq\eta\in C^{\infty}_{0}(\mathrm{B}^{+}_{1})\). For all \(t\in[0,1]\) define \(u_{t}=u+t\eta\). We note that:
\begin{itemize}
\item[-] \(u_{t}\in W^{1,G}(\mathrm{B}_{1}^{+})\);
\item[-] In \((\partial \mathrm{B}_{1})^{+}\), \(u_{t}=u+t\eta=g+t\cdot 0=g\);
\item[-] In \(\mathrm{T}_{1}\), \(u_{t}=u+t\eta\geq u\geq 0\).
\end{itemize}
Thus, \(u_{t}\in \mathcal{G}\), for all \(t\in[0,1]\) and consequently, since \(u\) minimizes \(\mathcal{J}\) with respect to the class \(\mathcal{G}\), we have that
\begin{eqnarray}\label{cond1dolema3.2}
\frac{d}{dt}\Bigg|_{t=0}\mathcal{J}(u_{t})\geq 0.    
\end{eqnarray}
However,
\begin{eqnarray*}
\frac{d}{dt}\mathcal{J}(u_{t})&=&\int_{\mathrm{B}^{+}_{1}}\frac{d}{dt}G(|\nabla u+t\nabla \eta|)dx\\
&=& \int_{\mathrm{B}^{+}_{1}}\frac{d}{dt}g(|\nabla u+t\nabla \eta|)\frac{(\nabla u+t\nabla \eta)\cdot \nabla \eta}{|\nabla u+t\nabla \eta|}dx.
\end{eqnarray*}
Using this expression in \eqref{cond1dolema3.2} it follows that \(u\) is a sub \(g\)-harmonic in \(\mathrm{B}^{+}_{1}\). 

On the other hand, given \(\varepsilon > 0\), we have that \( u_{\varepsilon} = u - \varepsilon \eta \) is a competitor for minimizer of \(\mathcal{J}\) in \(\mathcal{G}\). In this case, by convexity of the N-function \(G\) and the minimality of \(u\), it holds that
\begin{eqnarray}\label{estdeuepsilon}
0&\leq&\varepsilon^{-1}(\mathcal{J}(u_{\varepsilon})-\mathcal{J}(u))\nonumber\\
&=&\varepsilon^{-1}\int_{\mathrm{B}^{+}_{1}}(G(|\nabla u-\varepsilon\nabla \eta|)-G(|\nabla u|))dx\nonumber\\
&\leq&\int_{\mathrm{B}^{+}_{1}}\left(-g(|\nabla u-\varepsilon\nabla \eta|)\frac{(\nabla u-\varepsilon\nabla \eta)\cdot \nabla \eta}{|\nabla u-\varepsilon\nabla\eta|}\right)dx.
\end{eqnarray}
Taking \(\varepsilon \to 0\) in \eqref{estdeuepsilon}, we obtain that
\begin{eqnarray*}
0\leq\int_{\mathrm{B}^{+}_{1}}\left(-g(|\nabla u|)\frac{\nabla u\cdot \nabla \eta}{|\nabla u|}\right)dx\Longrightarrow\int_{\mathrm{B}^{+}_{1}}\frac{g(|\nabla u|)}{|\nabla u|}\nabla u\cdot \nabla \eta dx\leq 0.
\end{eqnarray*}
Thus, \(u\) is a sub \(g\)-harmonic function in \(\mathrm{B}^{+}_{1}\).
\end{proof}

\subsection{Lipschitz regularity of minimizers}

Now, we proceed to develop a compactness tool for the thin obstacle problem. To this end, we will prove that minimizers of $\mathcal{J}$ are Lipschitz continuous. The main idea to achieve such regularity is to extend the original problem to a classical obstacle problem (cf. \cite{AM, Xavier}). Specifically, let $u$ be a minimizer of $\mathcal{J}$ over the class $\mathcal{G}$. We may consider the even extensions $\tilde{u}$ and $\tilde{\varphi}$ of $u$ and $\varphi$, respectively, over $\mathrm{T}_{1}$:

\begin{eqnarray}\label{par}
\tilde{u}(x)=:\left\{
\begin{array}{rclcl}
u(x',x_{n}),&\text{if}& x_{n}\geq 0\\
u(x',-x_{n}),&\text{if}& x_{n}\leq 0
\end{array}
\right. \,\,\text{and}\,\, \tilde{\varphi}(x)=:\left\{
\begin{array}{rclcl}
\varphi(x',x_{n}),& \text{if}& x_{n}\geq 0\\
\varphi(x',-x_{n}),& \text{if}& x_{n}\leq 0.
\end{array}
\right.
\end{eqnarray}

By the Gluing Sobolev Functions (see \cite[Proposition 8.1]{BM21}) we have that \(\tilde{u}\in W^{1,G}(\mathrm{B}_{1})\) and $\tilde{\varphi}\in W^{1,G}(\mathrm{B}_{1})\cap C^0(\overline{\mathrm{B}_{1}})$. Moreover, we note that \(\tilde{u}\) is a minimizer of the functional
\begin{eqnarray}\label{funcionaldoobstaculo}
\tilde{\mathcal{J}}(v)=\int_{\mathrm{B}_{1}}G(|\nabla v(x)|)dx
\end{eqnarray}
over the admissible set
\[
\tilde{\mathcal{G}}=\{w\in W^{1,G}(\mathrm{B}_{1}); w=\tilde{\varphi}\,\,\,\text{on}\,\,\, \partial\mathrm{B}_{1}\,\,\, \text{and}\,\,\, w\geq 0\,\,\,\text{in}\,\,\, \mathrm{T}_{1}\}.
\]
Really, let \( v \in \tilde{\mathcal{G}}\) be a competitor for the functional \(\tilde{\mathcal{J}}\). Defining \( \tilde{v}(x',x_n) := v(x',-x_n) \), we see that both \( v\vert_{\mathrm{B}_1^+} \) and \( \tilde{v}\vert_{\mathrm{B}_1^+} \) belong to the set \( \mathcal{G} \). So, by the minimality of \(u\), it holds  
\begin{eqnarray*}
\tilde{\mathcal{J}}(v)=\int_{\mathrm{B}_1} G(|\nabla v|)dx &=& \int_{\mathrm{B}_1^+} G(|\nabla v|)dx + \int_{\mathrm{B}_1^+} G(|\nabla \tilde{v}|)dx\\
&\geq& 2\int_{\mathrm{B}^{+}_{1}}G(|\nabla u|)dx=\tilde{\mathcal{J}}(\tilde{u}),
\end{eqnarray*}
since in \(\mathrm{B^{-}_{1}}\), \(|\nabla \tilde{u}|\) is equal to \(|\nabla u|\) in \(\mathrm{B}^{+}_{1}\). Thus, we conclude that \( \tilde{u} \in \tilde{\mathcal{G}} \) is a minimizer for the functional \(\tilde{\mathcal{J}}\).

With this approach, starting from the obstacle problem, we can prove the existence of a minimizer of the functional $\mathcal{J}$ in the class $\mathcal{G}$.

 \begin{proposition}
Assume that \({\rm({\bf H1})}-{\rm({\bf H2})}\) holds. Then, there exists a minimizer \(u\in \mathcal{G}\) of the functional \(\mathcal{J}\).
 \end{proposition}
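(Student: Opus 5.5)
We prove existence by the direct method of the calculus of variations in the reflexive space \(W^{1,G}(\mathrm{B}_1^+)\). The argument has four steps: nonemptiness of \(\mathcal{G}\), coercivity, weak closedness of \(\mathcal{G}\), and weak lower semicontinuity of \(\mathcal{J}\).

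\emph{Nonemptiness.} By {\rm(\textbf{H2})}, \(\varphi\) itself belongs to \(\mathcal{G}\), since \(\varphi\ge 0\) on \(\mathrm{T}_1\). Hence
\[
m:=\inf_{\mathcal{G}}\mathcal{J}\le \mathcal{J}(\varphi)<\infty ,
\]
and we may take a minimizing sequence \((u_k)\subset\mathcal{G}\) with \(\mathcal{J}(u_k)\to m\) and \(\mathcal{J}(u_k)\le \mathcal{J}(\varphi)+1\).

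\emph{Coercivity.} Write \(u_k=\varphi+w_k\), where \(w_k\) vanishes on \((\partial\mathrm{B}_1)^+\). We need an \(L^G\) bound on \(u_k\) itself, not only on \(\nabla u_k\).
\begin{itemize}
\item Lemma \ref{inclusion} bounds \(\|\nabla u_k\|_{L^G}\) in terms of \(\rho_G(|\nabla u_k|)=\mathcal{J}(u_k)\), which is uniformly bounded.
\item By the doubling property (from {\rm(\textbf{H1})}, \(G(2t)\le 2^{1+g_0}G(t)\)) and convexity,
\[
\rho_G(|\nabla w_k|)\le C\bigl(\mathcal{J}(u_k)+\mathcal{J}(\varphi)\bigr).
\]
\item The Poincaré inequality (Lemma \ref{poincareinequality}) applies to \(w_k\). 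It requires zero boundary values on the whole of \(\partial\mathrm{B}_1^+\), whereas \(w_k\) vanishes only on the curved part. I would either cite the standard version for functions vanishing on a boundary portion of positive measure, or run the argument on the even extensions \(\tilde w_k\), which belong to \(W^{1,G}_0(\mathrm{B}_1)\) and satisfy \(\rho_G(|\nabla\tilde w_k|)=2\rho_G(|\nabla w_k|)\).
\item Together with the growth bound \(G(Ct)\le \max\{C^{1+\delta_0},C^{1+g_0}\}G(t)\) and Lemma \ref{inclusion} again, this gives \(\sup_k\|u_k\|_{W^{1,G}(\mathrm{B}_1^+)}<\infty\).
\end{itemize}

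\emph{Compactness and closedness of \(\mathcal{G}\).} Since \(W^{1,G}(\mathrm{B}_1^+)\) is reflexive, a subsequence converges weakly, \(u_k\rightharpoonup u\). The set \(\mathcal{G}\) is convex, and we check that it is strongly closed, hence weakly closed by Mazur's lemma.
\begin{itemize}
\item The condition \(u=\varphi\) on \((\partial\mathrm{B}_1)^+\) is preserved because it means \(u-\varphi\) lies in the closed subspace of functions whose trace vanishes there.
\item The condition \(u\ge 0\) on \(\mathrm{T}_1\) is preserved because, by Remark \ref{trace}, the trace operator is continuous (indeed compact) from \(W^{1,G}\hookrightarrow W^{1,1+\delta_0}\) into \(L^r\). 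Strong trace convergence yields an a.e.\ convergent subsequence, so the nonnegativity passes to the limit.
\end{itemize}
Thus \(u\in\mathcal{G}\).

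\emph{Lower semicontinuity.} The functional \(\mathcal{J}(v)=\int G(|\nabla v|)\) is convex, since \(G\) is convex and nondecreasing and \(\xi\mapsto|\xi|\) is convex. It is finite on \(W^{1,G}\), and strongly continuous there by the \(\Delta_2\) condition and dominated convergence. A convex, strongly lower semicontinuous functional is weakly lower semicontinuous, so
\[
\mathcal{J}(u)\le\liminf_k\mathcal{J}(u_k)=m ,
\]
and \(u\) is a minimizer.

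The main obstacle is the coercivity step, namely controlling \(\|u_k\|_{L^G}\) when only the curved part of the boundary carries a Dirichlet condition. Alternatively, one can avoid the issue by arguing with the extended problem \(\tilde{\mathcal{J}}\) on \(\tilde{\mathcal{G}}\), where the full boundary condition on \(\partial\mathrm{B}_1\) makes Lemma \ref{poincareinequality} apply directly. One then minimizes \(\tilde{\mathcal{J}}\) and restricts the minimizer to \(\mathrm{B}_1^+\). Uniqueness of the minimizer, from strict convexity of \(G\), makes it even, so its restriction minimizes \(\mathcal{J}\) by the computation preceding this proposition.
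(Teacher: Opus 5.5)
Your proposal is correct and follows essentially the paper's route. You take a minimizing sequence and apply Poincaré (Lemma \ref{poincareinequality}) to the even extensions of $u_k-\varphi$ to get a uniform $W^{1,G}$ bound, then use weak compactness from reflexivity, preserve the constraints through the trace operator of Remark \ref{trace}, and get lower semicontinuity from the convexity of $G$. The only differences are cosmetic: you bound $\rho_G(|\nabla w_k|)$ explicitly via doubling, and you obtain weak closedness of $\mathcal{G}$ via Mazur rather than the compact trace.
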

 
\begin{proof}
First, observe that the set \(\mathcal{G}\) is nonempty. Indeed, due to the properties of \(\varphi\) given in ({\bf H2}), we have that \(\varphi \in \mathcal{G}\) and \(0\leq\mathcal{J}(\varphi)<+\infty\) (since \(\varphi\in W^{1,G}(\mathrm{B}^{+}_{1})\)).
Let \((u^{k})_{k\in\mathbb{N}}\subset \mathcal{G}\) be a minimizing sequence for the functional \(\mathcal{J}\). Considering the even extension of each function, we obtain a sequence $(\tilde{u}^k)_{k\in \mathbb{N}}$ in the class $\mathcal{\tilde{G}}.$ Since the sequence $(u^k)_{k\in \mathbb{N}}$ is a minimizer, it follows that the sequence $\left(\tilde{\mathcal{J}}(\tilde{u}^k)\right)_{k\in \mathbb{N}}$ is bounded, and by Lemma \ref{poincareinequality}, we have
$$\int_{\mathrm{B}_{1}}G(|\tilde{u}^k-\tilde{\varphi}|)\leq \int_{\mathrm{B}_{1}}G(\mathrm{C}|\nabla(\tilde{u}^k-\tilde{\varphi})|). $$
 Thus, there exists a constant $\mathrm{C}>0$ such that for all $k\in \mathbb{N}$
 $$\int_{\mathrm{B}_{1}}(G(|\tilde{u}^k|) + G(|\nabla \tilde{u}^k|))\,dx \leq \mathrm{C}. $$
 Therefore, since each $\tilde{u}_k$ is an even extension of $u_k,$ by Lemma \ref{inclusion}, we see that for all $k\in \mathbb{N}$ we have $||u^k||_{W^{1,G}(\mathrm{B}_{1}^+)}\leq \mathrm{C}.$ Consequently, there exists a function $u_{\infty}\in W^{1,G}(\mathrm{B}_{1}^+)$ such that, up to the extraction of a subsequence $u^k \rightharpoonup u_\infty$ in $W^{1,G}(\mathrm{B}_{1}^+).$ By the Remark \ref{trace}, we have that $u^k \rightharpoonup u_\infty$ in $W^{1,1+\delta_{0}}(\mathrm{B}_{1}^+).$  Moreover, by the compact embedding $W^{1,1+\delta_{0}}(\mathrm{B}_{1})\hookrightarrow L^{1+\delta_{0}}(\mathrm{B}_{1})$ we have $u^k\to u_{\infty}$ a.e. in $\mathrm{B}_{1}^+,$ and given that—as discussed in Remark \ref{trace}—the trace operator $T:W^{1,1+\delta_{0}}(\mathrm{B}_{1}^+)\to L^{r}(\partial \mathrm{B}_{1}^+)$ is compact, it follows that $u_\infty=\varphi$ on $(\partial \mathrm{B}_{1})^+$ and $u_\infty\geq 0$ on $\mathrm{T}_{1}.$ Thus, we conclude that $u_\infty\in \mathcal{G}.$ 
 Now observe that the convexity of $G$ implies that
  $$\int_{\mathrm{B}_{1}^+}G(|\nabla u^k|)\,dx \geq  \int_{\mathrm{B}_{1}^+}G(|\nabla u_\infty|)\,dx + \int_{\mathrm{B}_{1}^+}\frac{g(|\nabla u_\infty|)}{|\nabla u_\infty|}\nabla u_\infty\cdot(\nabla u^k- \nabla u_\infty)\,dx, $$
 and since $u^k \rightharpoonup u_\infty$ in $W^{1,G}(\mathrm{B}_{1}^+),$ it follows that
 $$\int_{\mathrm{B}_{1}^+}G(|\nabla u_\infty|)\,dx\leq \liminf_{k\to \infty} \int_{\mathrm{B}_{1}^+}G(|\nabla u^k|)\,dx.$$
  Hence, since $(u^k)_{k\in \mathbb{N}}$ is a minimizer sequence in $\mathcal{G},$ it follows that $u_\infty\in \mathcal{G}$ is a minimizer of the same functional $\mathcal{J}.$

\end{proof}

From the minimizer \( \tilde{u} \), we can construct a function \( \psi \) that serves as an obstacle for \( \tilde{u} \).
\begin{lemma}\label{obst}
There exists a function \( \psi \in C^0(\mathrm{B}_1) \cap W^{1,G}(\mathrm{B}_1) \) that vanishes on \( \mathrm{T}_{1} \) for which \( \tilde{u} \) is the solution to the classical obstacle problem in \( \mathrm{B}_1 \) with obstacle \( \psi \).
\end{lemma}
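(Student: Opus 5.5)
Following the strategy of \cite{AM,Xavier}, I would take $\psi$ to be the $g$-harmonic replacement of the zero function in the two half balls, glued along $\mathrm{T}_1$. Let $\varphi^-:=\min\{\tilde\varphi,0\}$, which is continuous and lies in $W^{1,G}(\mathrm{B}_1)$, and let $h^+$ be the minimizer of $\int_{\mathrm{B}_1^+}G(|\nabla w|)\,dx$ among all $w$ with $w=\varphi^-$ on $(\partial\mathrm{B}_1)^+$ and $w=0$ on $\mathrm{T}_1$. Existence follows exactly as in the existence proposition above (direct method, Lemma \ref{poincareinequality}, Lemma \ref{inclusion}, compactness of the trace from Remark \ref{trace}), and uniqueness from the strict convexity of $G$. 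Let $\psi$ be the even extension of $h^+$ to $\mathrm{B}_1$. By the gluing result \cite[Proposition 8.1]{BM21}, $\psi\in W^{1,G}(\mathrm{B}_1)$, and $\psi=0$ on $\mathrm{T}_1$ by construction. Since the boundary data are $\le 0$, the comparison principle for $g$-harmonic functions gives $\psi\le 0$. Continuity up to $\mathrm{T}_1$ and up to $\partial\mathrm{B}_1$ would come from boundary De Giorgi/Hölder estimates for $\Delta_g$ (as in Lieberman) with continuous data on a Lipschitz domain. So $\psi\in C^0(\mathrm{B}_1)\cap W^{1,G}(\mathrm{B}_1)$.

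Next I would check that $\tilde u\ge\psi$. The function $\tilde u$ is $g$-harmonic in $\mathrm{B}_1^\pm$ by Proposition \ref{prop3.2} and symmetry. On $\mathrm{T}_1$ we have $\tilde u\ge 0=\psi$, and on $(\partial\mathrm{B}_1)^+$ we have $\tilde u=\tilde\varphi\ge\varphi^-=\psi$. Testing with $(\psi-\tilde u)^+$ in each half ball then gives $\tilde u\ge\psi$ by the comparison principle. Hence $\tilde u$ lies in
\[
\mathcal K_\psi=\{w\in W^{1,G}(\mathrm{B}_1): w=\tilde\varphi \text{ on } \partial\mathrm{B}_1,\ w\ge\psi \text{ in } \mathrm{B}_1\}.
\]

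The main step is to show that $\tilde u$ minimizes $\tilde{\mathcal J}$ over $\mathcal K_\psi$. Since $\mathcal K_\psi\subset\tilde{\mathcal G}$ (because $\psi=0$ on $\mathrm{T}_1$), and $\tilde u\in\mathcal K_\psi$ minimizes over the larger class $\tilde{\mathcal G}$, it also minimizes over $\mathcal K_\psi$. The inclusion needs the trace inequality: $w\ge\psi$ a.e. implies $\mathrm{tr}\,w\ge\mathrm{tr}\,\psi=0$ on $\mathrm{T}_1$, which holds because the trace of a nonnegative $W^{1,1+\delta_0}$ function is nonnegative and $w-\psi\ge 0$. This shows that $\tilde u$ solves the classical obstacle problem with obstacle $\psi$.

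I expect the delicate points to be technical rather than structural. The first is the continuity of $\psi$ up to $\mathrm{T}_1$, which needs a boundary regularity result for the $g$-Laplacian with zero flat boundary data; I would cite Lieberman's boundary estimates. The second is the comparison principle in $W^{1,G}$ with traces, which follows from the monotonicity of $\xi\mapsto g(|\xi|)\xi/|\xi|$. If continuity of $\psi$ on the closed ball is problematic, I would only claim interior continuity, $\psi\in C^0(\mathrm{B}_1)$, which is all the statement requires.
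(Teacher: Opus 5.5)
Your proposal is correct and follows essentially the paper's proof: glue $g$-harmonic functions on $\mathrm{B}_1^{\pm}$ that vanish on $\mathrm{T}_1$, obtain $\tilde u\ge\psi$ by comparison, and conclude from $\mathcal K_\psi\subset\tilde{\mathcal G}$, exactly as the paper does. The one difference is in your favour: the paper prescribes the constant $-\|u\|_{L^\infty(\mathrm{B}_1^+)}$ on $(\partial\mathrm{B}_1)^{\pm}$, which jumps to $0$ across the rim $\partial\mathrm{B}_1\cap\{x_n=0\}$ and so is not a $W^{1,G}$ trace when, e.g., $\delta_0\ge 1$; your datum $\min\{\tilde\varphi,0\}$ lies in $W^{1,G}\cap C^0$, vanishes on $\mathrm{T}_1$ by (\textbf{H2}), and still gives $-\|u\|_{L^\infty(\mathrm{B}_1^+)}\le\psi\le 0$ together with the same flat-boundary estimates that the Lipschitz argument needs later.
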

\begin{proof}
Let \( \psi_1 \) and \( \psi_2 \) be the solutions in \( \mathrm{B}_1^+ \) and \( \mathrm{B}_1^- \), respectively, of the following Dirichlet problems:
\begin{eqnarray*}
\left \{
\begin{array}{rclcl}
\Delta_{g}\psi_{1} & = & 0 & \text{in} & \mathrm{B}^{+}_{1}\\
\psi_1 & = & -\|u\|_{L^{\infty}(\mathrm{B}^{+}_{1})} & \text{on} & (\partial \mathrm{B}_1)^+\\
\psi_1 & = & 0 & \text{on} & \overline{\mathrm{T}_1}\\
\end{array}
\right.
\,\,\,\,\text{and}\,\,\,\,
\left \{
\begin{array}{rclcl}
\Delta_{g}\psi_{2} & = & 0 & \text{in} & \mathrm{B}^{-}_{1}\\
\psi_2 & = & -\|u\|_{L^{\infty}(\mathrm{B}^{+}_{1})} & \text{on} & (\partial \mathrm{B}_1)^{-}\\
\psi_2 & = & 0 & \text{on} & \overline{\mathrm{T}_1}.\\
\end{array}
\right.
\end{eqnarray*}
The boundary regularity theorem proved in \cite[Theorem 1.1]{BS25} ensures that  
\( \psi_i \in C^{1,\alpha}(\overline{\mathrm{B}_{r}^{\pm}}) \) for \( i=1,2 \) and \( r \in (0,1) \), with the estimate  
\[
\|\psi_i\|_{C^{1,\alpha}(\overline{\mathrm{B}_r^{\pm}})} \leq \mathrm{C} \cdot \|u\|_{L^{\infty}(\mathrm{B}_1^+)},
\]
where \( \mathrm{C} = \mathrm{C}(n,\delta_{0},g_{0},g(1),r) > 0 \).  

Now, we can invoke the Comparison Principle \cite[Lemma 2.8]{MW}, we have \( u \geq \psi_i \) in \( \mathrm{B}_1^{\pm} \).  
Defining \( \psi \) as the function  
\[
\psi(x',x_n) =
\begin{cases}
\psi_1(x',x_n) \quad \text{in} \quad \overline{\mathrm{B}_1^+}; \\
\psi_2(x',x_n) \quad \text{in} \quad \overline{\mathrm{B}_1^-}.
\end{cases}
\]
We see that the extension \( \tilde{u} \) satisfies \( \tilde{u} \geq \psi \) in \( \mathrm{B}_1 \). Moreover, by \cite[Proposition 8.1]{BM21}, we have \( \psi \in W^{1,G}(\mathrm{B}_1) \cap C^0(\mathrm{B}_1) \).  

Finally, let us prove that \( \tilde{u} \) is a solution to the classical obstacle problem associated with the \( g \)-Laplacian operator with obstacle \( \psi \).  To this end, we consider the variational problem of minimizing the functional \(\tilde{\mathcal{J}}\) over the set  
\[
\mathcal{G}_{\psi} := \{v \in W^{1,G}(\mathrm{B}_1); v = \tilde{f} \quad \text{on} \quad \partial \mathrm{B}_1, \,\,\, v \geq \psi \quad \text{in} \quad \mathrm{B}_1\}.
\]
Since for any \( v \in \mathcal{G}_{\psi} \) we have \( v \geq \psi = 0 \) on \( \mathrm{T}_1 \), it follows that \( v \in \tilde{\mathcal{G}} \), and by minimality, we obtain  
\[
\tilde{\mathcal{J}}(\tilde{u}) = \int_{\mathrm{B}_1} G(|\nabla\tilde{u}|)dx \leq \int_{\mathrm{B}_1} G(|\nabla v|)dx = \tilde{\mathcal{J}}(v).
\]
Therefore, \( \tilde{u} \) is a minimizer for the classical obstacle problem.  
\end{proof}
\begin{remark}
Combining this result with Hölder regularity for the minimizer of the classical obstacle problem (cf. \cite[Theorem 5.8]{Kar21}), we conclude that the minimizer \( \tilde{u}\) of \(\tilde{\mathcal{J}}\) over set \(\tilde{\mathcal{G}}\) is locally Hölder continuous in \( \mathrm{B}_1\).    
\end{remark}

In the variational problem \eqref{funcionaldoobstaculo} we have the following result.
\begin{proposition}
Let \(\tilde{u}\) be a minimizer of the functional \(\tilde{\mathcal{J}}\) over the set \(\tilde{\mathcal{G}}\). Then, \(\tilde{u}\) is solution to
\begin{eqnarray*}
\left \{
\begin{array}{rclcl}
\Delta_{g}\tilde{u} & = & 0 & \text{in} & \mathrm{B}_{1}\setminus \{\tilde{u}=0\}\cap\{x_{n}=0\}\\
\Delta_{g}\tilde{u} & \leq & 0 & \text{in} & \mathrm{B}_{1}\\
\tilde{u} & \geq & 0 & \text{on} & \mathrm{T}_{1}\\
\tilde{u} & = & \tilde{\varphi} & \text{on} & \partial\mathrm{B}_{1}.\\
\end{array}
\right.
 \end{eqnarray*}      
\end{proposition}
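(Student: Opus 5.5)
We prove the four lines of the system one at a time, using the variational inequality for $\tilde u$ on $\tilde{\mathcal G}$. The boundary condition $\tilde u=\tilde\varphi$ on $\partial\mathrm B_1$ and the sign condition $\tilde u\ge 0$ on $\mathrm T_1$ hold because $\tilde u\in\tilde{\mathcal G}$, as shown above, with traces understood as in Remark \ref{trace}. The remaining two claims are obtained by perturbing $\tilde u$ by $\pm t\eta$ and differentiating $\tilde{\mathcal J}$ at $t=0$, exactly as in Proposition \ref{prop3.2}.

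\textbf{Supersolution in $\mathrm B_1$.} Take $0\le\eta\in C_0^\infty(\mathrm B_1)$ and set $v_t=\tilde u+t\eta$ for $t\in(0,1]$. Then $v_t=\tilde\varphi$ on $\partial\mathrm B_1$ and $v_t\ge\tilde u\ge0$ on $\mathrm T_1$, so $v_t\in\tilde{\mathcal G}$. Minimality gives $0\le t^{-1}(\tilde{\mathcal J}(v_t)-\tilde{\mathcal J}(\tilde u))$. Convexity of $G$ makes this difference quotient monotone in $t$. Moreover $g(|\nabla \tilde u+t\nabla\eta|)|\nabla\eta|$ is dominated by an $L^1$ function, since $g(s)s\le (1+g_0)G(s)$ under \textbf{(H1)} and $G$ satisfies the $\Delta_2$ condition (Young's inequality for the complementary function handles the cross term). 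Letting $t\to0^+$ therefore yields
\[
\int_{\mathrm B_1}\frac{g(|\nabla\tilde u|)}{|\nabla\tilde u|}\nabla\tilde u\cdot\nabla\eta\,dx\ge0 .
\]
This says $\tilde u$ is a weak supersolution, i.e.\ $\Delta_g\tilde u\le0$ in $\mathrm B_1$.

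\textbf{Equation off the contact set.} Let $\Omega=\mathrm B_1\setminus(\{\tilde u=0\}\cap\{x_n=0\})$. Since $\tilde u$ is continuous (locally Hölder by the preceding remark) and the contact set $\Lambda=\{\tilde u=0\}\cap \mathrm T_1$ is relatively closed in $\mathrm B_1$, the set $\Omega$ is open. It remains to show $\tilde u$ is also a subsolution there. Take $0\le\eta\in C_0^\infty(\Omega)$ with support $K$, a compact set disjoint from $\Lambda$. Continuity gives $\tilde u\ge c_K>0$ on $K\cap\mathrm T_1$ for some constant $c_K$. Hence for $0<t<c_K/\|\eta\|_\infty$ the function $\tilde u-t\eta$ stays $\ge0$ on $\mathrm T_1$ and equals $\tilde\varphi$ on $\partial \mathrm B_1$, so it lies in $\tilde{\mathcal G}$. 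The same difference-quotient argument with $-t$ gives the reverse inequality, so $\int \frac{g(|\nabla\tilde u|)}{|\nabla\tilde u|}\nabla\tilde u\cdot\nabla\eta=0$ for every nonnegative $\eta$. Writing an arbitrary test function as the difference of nonnegative ones via a partition argument, or simply repeating with $\pm$, we conclude $\Delta_g\tilde u=0$ weakly in $\Omega$.

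The main point needing care is this last step. It relies on continuity of $\tilde u$ up to $\mathrm T_1$ to guarantee a positive margin on the support of $\eta$; without it the competitor $\tilde u-t\eta$ might violate the thin constraint. The other delicate point is justifying the passage $t\to0$ under the integral in Orlicz growth, which I would handle by dominated convergence using the growth bounds from \textbf{(H1)} and the Remark on two-sided control of $g$.
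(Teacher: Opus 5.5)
Your proof is correct and follows essentially the same route as the paper. The supersolution property comes from the one-sided perturbations $\tilde u+t\eta$ with $\eta\ge0$, and the equation off the contact set comes from two-sided perturbations. These stay admissible because continuity gives $\tilde u$ a positive margin on the thin set near the support of the test function.

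The only difference is organizational. The paper argues locally, in balls around points of $\{\tilde u>0\}\cap\mathrm T_1$ and separately in $\mathrm B_1\setminus\{x_n=0\}$, and leaves the gluing of these local statements implicit. You instead use the positive minimum of $\tilde u$ on the compact set $\operatorname{supp}\eta\cap\mathrm T_1$, which treats all of $\mathrm B_1\setminus(\{\tilde u=0\}\cap\{x_n=0\})$ in one step and avoids that gluing. You also make the differentiation under the integral explicit.
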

\begin{proof}
As in Proposition \ref{prop3.2}, we can verify that $\tilde{u}$ is a supersolution in \(\mathrm{B}_{1}\) and is \(g\)-harmonic in $\mathrm{B}_1^+$ ensuring that $\Delta_g\tilde{u}=0$ in the set $\mathrm{B}_1\setminus\{x_n=0\}$. It remains to verify that $\tilde{u}$ is $g$-harmonic in $\mathrm{B}_{1}\cap\{\tilde{u}>0\}$. For this, we note that for all  $w\in \tilde{\mathcal{G}}$, we have  
$$\int_{\mathrm{B}_1}\frac{g(|\nabla\tilde{u}|)}{|\nabla\tilde{u}|}\nabla\tilde{u}\cdot\nabla(\tilde{u}-w)dx \geq 0.
$$  
 
Now, since the set $\{u>0\}\cap \mathrm{B}_1$ is open, given a point $x_0\in\{u>0\}\cap\{x_n=0\}$, there exists $\mathrm{B}_r(x_0)\subset\subset \{u>0\}\cap \mathrm{B}_1$. Let $\xi\in \mathrm{C}_0^{\infty}(\mathrm{B}_r(x_0))$ and define $w=\tilde{u}+t\xi$. Note that $w\in \tilde{\mathcal{G}}$, since in $\mathrm{B}_r(x_0)^{c}\cap\{x_n=0\}$ we have $w=\tilde{u}\geq 0$. Thus, for sufficiently small $|t|$, we also have $w=\tilde{u}+t\xi >0$ in $\mathrm{B}_r(x_0)\cap\{x_n=0\}$. Therefore, $w\geq 0$ in $\{x_n=0\}$, and therefore $w\in \tilde{\mathcal{G}}$. In this form, by the previous observation, we obtain
$$
t\int_{\mathrm{B}_r(x_0)}\frac{g(|\nabla\tilde{u}|)}{|\nabla\tilde{u}|}\nabla\tilde{u}\cdot\nabla\xi dx\,\geq 0.
$$  
Since the above inequality holds for both positive and negative values of $t$ of sufficiently small magnitude, we conclude that
$$
\int_{\mathrm{B}_r(x_0)}\frac{g(|\nabla\tilde{u}|)}{|\nabla\tilde{u}|}\nabla\tilde{u}\cdot\nabla\xi\,dx=0. 
$$  
That is, $\Delta_g\tilde{u}=0$ in $\mathrm{B}_r(x_0).$  
\end{proof}
With these preliminary observations, we can prove the main result of this section on the solutions for the thin obstacle problem.

\begin{proposition}\label{Lipschitzregularity}
Let $u\in W^{1,G}(\mathrm{B}^{+}_{1})$ be a minimizer of the functional \ref{eq1}. Then, $u$ is Lipschitz continuous in \(\mathrm{B}^{+}_{3/4}\). Specifically, we have the following estimate  
 $$\|u\|_{C^{0,1}(\mathrm{B}_{3/4}^+)}\leq \mathrm{\mathrm{C}_1}\|u\|_{L^{\infty}(\mathrm{B}_1^+)},
 $$  
 where $\mathrm{\mathrm{C}_1}$ is a positive constant depending only on \(n\), \(g_0\), \(\delta_0\) and \(g(1)\).  
\end{proposition}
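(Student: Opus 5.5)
The strategy is to reduce the Lipschitz estimate to known regularity for the classical obstacle problem for the $g$-Laplacian, using the even extension $\tilde u$ and the obstacle $\psi$ of Lemma \ref{obst}. The key point is that $\psi$ is not just continuous but $C^{1,\alpha}$ up to $\mathrm{T}_1$ from each side. At $\mathrm{T}_1$ it has a \emph{concave} corner: on both sides $\psi\le 0$ and $\psi=0$ on $\mathrm{T}_1$. So $\psi$ is Lipschitz in $\mathrm{B}_{7/8}$, with
\[
\|\nabla\psi\|_{L^\infty(\mathrm{B}_{7/8})}\le \mathrm{C}\|u\|_{L^{\infty}(\mathrm{B}_1^+)}
\]
by the estimate from \cite[Theorem 1.1]{BS25}. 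I would not try to prove $C^{1,\alpha}$ of $\tilde u$ here; only a gradient bound is needed.

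\textbf{Step 1 (normalization).} Set $K=\|u\|_{L^\infty(\mathrm{B}_1^+)}$; if $K=0$ there is nothing to prove. Replace $u$ by $u/K$ and $g$ by $g^*(t)=g(Kt)/g(K)$ as in Remark \ref{scaling}. The constants $\delta_0,g_0$ are unchanged, and since all statements (obstacle problem, $\psi$, thin constraint) are invariant under this scaling, we may assume $\|u\|_{L^\infty}=1$. One caveat: the constant from \cite{BS25} depends on $g(1)$, and after normalization $g^*(1)=1$. I would therefore run the argument for $g^*$, so that the final constant depends only on $n,\delta_0,g_0$, and $g(1)$ enters at most harmlessly.

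\textbf{Step 2 (gradient bound by comparison).} Fix $x_0\in \mathrm{B}_{3/4}$ and $r=d(x_0)$, the distance to the coincidence set $\Lambda=\{\tilde u=\psi\}$, which is contained in $\mathrm{T}_1\cap\{\tilde u=0\}$ because $\psi<0$ off $\mathrm{T}_1$.

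If $r\ge 1/16$, then $\tilde u$ is $g$-harmonic in $\mathrm{B}_{1/16}(x_0)$ by the preceding proposition. The interior Lieberman gradient estimate (in scaled form, Remark \ref{scaling}) gives $|\nabla \tilde u(x_0)|\le \mathrm{C}$.

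If $r<1/16$, pick $y_0\in\Lambda$ with $|x_0-y_0|=r$, so $\tilde u(y_0)=\psi(y_0)=0$. Then I would show the growth bound
\[
\sup_{\mathrm{B}_{2r}(y_0)}|\tilde u| \le \mathrm{C} r .
\]
Given this, since $\tilde u$ is $g$-harmonic in $\mathrm{B}_r(x_0)$, the rescaled function $v(x)=\tilde u(x_0+rx)/r$ is $g$-harmonic and bounded by $\mathrm{C}$ in $\mathrm{B}_1$, and the interior estimate yields $|\nabla\tilde u(x_0)|\le \mathrm{C}$.

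\textbf{Step 3 (linear growth at free boundary points), the main obstacle.} The lower bound is immediate: $\tilde u\ge\psi\ge -\mathrm{C}|x-y_0|$, since $\psi$ is Lipschitz and vanishes at $y_0$.

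For the upper bound, $\tilde u$ is a $g$-supersolution with $\tilde u(y_0)=0$. Apply the weak Harnack inequality for supersolutions to $w=\tilde u+\mathrm{C}\rho\ge0$ on $\mathrm{B}_{2\rho}(y_0)$, which gives
\[
\Big(\intav{\mathrm{B}_\rho}w^{q}\Big)^{1/q}\le \mathrm{C}\inf_{\mathrm{B}_{\rho/2}} w\le \mathrm{C}\rho .
\]
Then I need an $L^\infty$ bound for $\tilde u^+$. Here I would use that $\tilde u$ is actually $g$-harmonic in $\{\tilde u>\psi\}$ while $\psi\le0$. Concretely, $\tilde u^+=\max(\tilde u,0)$ is a $g$-subsolution in $\mathrm{B}_1$: near $\Lambda$ we have $\tilde u\le 0$ only where $\psi$ forces it, and off $\Lambda$ $\tilde u$ is $g$-harmonic, so the max of two subsolutions is a subsolution. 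Thus the local maximum principle (De Giorgi/Moser for $g$-Laplacian) bounds $\sup_{\mathrm{B}_{\rho/2}}\tilde u^+$ by an $L^q$ average of $\tilde u^+\le w$, hence by $\mathrm{C}\rho$. Checking the subsolution property of $\tilde u^+$ rigorously (testing with $\phi\,\min(\tilde u^+/\varepsilon,1)$) is the delicate point.

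The same bounds near $\partial\mathrm{B}_{3/4}$ and the $L^\infty$ bound give the $C^{0,1}$ norm, with the constant depending only on $n,\delta_0,g_0,g(1)$.
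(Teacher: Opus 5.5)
Your proof is correct in substance, but at the key step, linear growth of $\tilde u$ near a contact point, you use a different mechanism from the paper.

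\textbf{The paper's route.} It follows the classical Caffarelli scheme. The lower bound comes, as in your proposal, from $\tilde u\ge\psi$ with $\psi$ Lipschitz. The upper bound uses the $g$-harmonic replacement $\tilde h$ of $h=\tilde u-\tilde u(x_0)+\mathrm{C}\mathrm{K}r$ in $\mathrm{B}_r(x_0)$. Comparison on the non-contact part of the ball, where $h$ is $g$-harmonic and $h\le 2\mathrm{C}\mathrm{K}r$ on the contact set, gives $h\le \tilde h+2\mathrm{C}\mathrm{K}r$. The Harnack inequality for $\tilde h\ge 0$, together with $\tilde h(x_0)\le h(x_0)=\mathrm{C}\mathrm{K}r$, closes the estimate. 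Lipschitz continuity then follows from a two-case argument for pairs of points.

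\textbf{Your route.} You apply the weak Harnack inequality to the nonnegative supersolution $\tilde u+\mathrm{C}\rho$, whose value at $y_0$ is $\mathrm{C}\rho$. You combine this with the fact that $\tilde u^+$ is a $g$-subsolution in all of $\mathrm{B}_1$ and with the $L^q$--$L^\infty$ local maximum principle. You finish with a pointwise gradient bound at the scale $\dist(x_0,\Lambda)$.

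\textbf{Comparison.} Your route needs no auxiliary Dirichlet problem and no comparison on the possibly irregular open set $\mathrm{B}_r(x_0)\setminus\{\tilde u=\psi\}$. It exploits the thin-obstacle structure (zero obstacle on $\mathrm{T}_1$, so $\tilde u$ is $g$-harmonic wherever it is positive) rather than working with a general Lipschitz obstacle. The price is two De Giorgi--Moser tools for the $g$-Laplacian in scale-invariant form. This is legitimate because of the normalization of Remark \ref{scaling} you set up in Step 1, which also removes the dependence on $g(1)$, a slightly sharper conclusion than the paper's.

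\textbf{On the subsolution property.} That $\tilde u^+$ is a subsolution is standard rather than delicate. Since $\tilde u$ is continuous and $g$-harmonic on the open set $\{\tilde u>0\}$, test with $\phi\min\{(\tilde u-\delta)^+/\varepsilon,1\}$ and let $\delta,\varepsilon\to 0$. Your heuristic ``max of two subsolutions'' does not apply, because $\tilde u$ is only a supersolution in $\mathrm{B}_1$.

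\textbf{Two small repairs.}
\begin{enumerate}
\item The inclusion $\{\tilde u=\psi\}\subset\mathrm{T}_1\cap\{\tilde u=0\}$ does not follow from $\psi<0$ off $\mathrm{T}_1$; it would require a strong comparison principle. You never need it: take $\Lambda:=\{\tilde u=0\}\cap\mathrm{T}_1$, off which $\tilde u$ is $g$-harmonic by the preceding proposition.
\item The threshold $1/16$ must be lowered, so that $\mathrm{B}_{8r}(y_0)$ stays inside the ball on which $\psi$ is Lipschitz. This is where you apply weak Harnack with $\rho=4r$.
\end{enumerate}
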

\begin{proof}
We consider \(\tilde{u}\), the even extension of \(u\), which is a minimizer of the variational problem \ref{funcionaldoobstaculo}. Given \(\tau\in (0,1)\), let \(\psi\in W^{1,G}(\mathrm{B}_1)\cap C^0(\mathrm{B}_1)\) be the obstacle obtained in Lemma \ref{obst} by gluing the functions \(\psi_i\in C^{1,\alpha}(\overline{\mathrm{B}_{\tau}^{\pm}})\), which satisfy, due to the boundary regularity for \(g\)-harmonic functions \cite[Theorem 1.1]{BS25} with such estimate  
\[
\|\psi_i\|_{C^{1,\alpha}(\mathrm{B}_\tau^{\pm})} \leq \mathrm{C}\|u\|_{L^{\infty}(\mathrm{B}_1^+)},
\]
where \(\mathrm{C}=\mathrm{C}(n,\delta_0,g_0,g(1),\tau)>0\).  

To obtain a constant depending only on the dimension and the PDE parameters, we choose \(\tau=\sqrt{7/8}\).  
Using \cite[Proposition 8.1]{BM21} once again, we see that \(\psi\) is Lipschitz in \(\mathrm{B}_{\tau}\) with the estimate  
\[
\|\psi\|_{C^{0,1}(\mathrm{B}_{\tau})}\leq \mathrm{C}\|u\|_{L^{\infty}(\mathrm{B}_1^+)}=:\mathrm{C}\cdot \mathrm{K}.
\]  
Thus, for any \(x_0\in \mathrm{B}_{\tau}\) and \(0<r<\left(1-\tau\right)\tau\), we have  
\begin{equation} \label{eq 3}
\sup_{\mathrm{B}_r(x_0)}|\psi(x)-\psi(x_0)| \leq \mathrm{C} \cdot \mathrm{K}r,
\end{equation}  
since in this case, \(\overline{\mathrm{B}_r(x_0)}\subset \mathrm{B}_{\tau}\).

Now, we obtain an estimate analogous to (\ref{eq 3}) for \(\tilde{u}\) when \(x_0\in \{\tilde{u}=\psi\}\). Indeed, in this case, using that \(\tilde{u}\geq \psi\), we obtain  
\begin{equation}\label{eq 4}
\inf_{\mathrm{B}_r(x_0)}\left(\tilde{u}(x)-\tilde{u}(x_0)\right)\geq -\mathrm{C} \cdot \mathrm{K}r.
\end{equation}
Define the function \(h(x)= \tilde{u}(x)-\tilde{u}(x_0) + \mathrm{CK}r\). From (\ref{eq 4}), we have \(h\geq 0\) in \(\mathrm{B}_r(x_0)\). On the other hand, from (\ref{eq 3}), we obtain
\[
h(x)\leq 2\mathrm{CK}r\quad\text{in}\quad \mathrm{B}_r(x_0)\cap\{\tilde{u}=\psi\}.
\]
Moreover, \(h\) is obviously a supersolution. Now, let \(\tilde{h}\) be the solution to the following Dirichlet problem:  
\begin{equation*}
\left\{
\begin{array}{rclcl}
\Delta_{g}\tilde{h}&=& 0& \mbox{in} &   \mathrm{B}_{r}(x_{0}); \\
\tilde{h} &=& h &\mbox{on}& \partial\mathrm{B}_{r}(x_{0}).\\
\end{array}
\right.
\end{equation*}
By the Comparison Principle, we get \(\tilde{h}\leq h\) in \(\mathrm{B}_r(x_0)\). Furthermore, since \(h\geq 0\) in \(\mathrm{B}_r(x_0)\), it follows from \cite[Theorem 2.2]{BM21} that \(\tilde{h}\geq 0\) in \(\overline{\mathrm{B}_r(x_0)}\). Thus, we obtain  
\begin{equation}\label{eq 5}
h<\tilde{h} + 2\mathrm{CK}r\quad\text{on}\quad \partial \mathrm{B}_r(x_0)\quad\text{and}\quad h\leq\tilde{h} + 2\mathrm{CK}r\quad\text{in}\quad \mathrm{B}_r(x_0)\cap\{\tilde{u}=\psi\}.
\end{equation}
Therefore,
\begin{equation}\label{eq 6}
h\leq \tilde{h} + 2\mathrm{CK}r\quad\text{in}\quad \mathrm{B}_r(x_0).
\end{equation}
On the other hand, we have \(0\leq \tilde{h}(x_0)\leq h(x_0)=\mathrm{CK}r\), and using Harnack's inequality (cf. \cite[Lemma 3.4]{B18}), we find that, for a new constant \(\mathrm{C}_1=\mathrm{C}_1(n,\delta,g_0)>0\),  
\[
\tilde{h}\leq \mathrm{C}_1\cdot \mathrm{CK}r\quad\text{in}\quad \overline{\mathrm{B}_{r/2}(x_0)}.
\]
Thus, combining this estimate with (\ref{eq 6}), we obtain  
\[
\tilde{u}(x)-\tilde{u}(x_0)\leq \mathrm{C}_1\cdot \mathrm{CK}r\quad\text{in}\quad \overline{\mathrm{B}_{r/2}(x_0)}.
\]
Therefore, combining this last inequality with (\ref{eq 4}), we conclude that, for a new constant \(\mathrm{C}=\mathrm{C}(n,\delta_0,g_{0},g(1))>0\), we have  
\begin{equation}\label{lip}
\sup_{\mathrm{B}_{r/2}(x_0)}|\tilde{u}(x)-\tilde{u}(x_0)| \leq \mathrm{CK}\frac{r}{2}.
\end{equation}
Now, let \(x\in \mathrm{B}_{\tau}\) and \(x_0\in \{\tilde{u}=\psi\}\cap \mathrm{B}_{\tau}\). We then consider two cases:
\begin{itemize}
\item [(i)] \(|x-x_0|<\frac{1}{2}\left(1-\tau\right)\tau\)\\
In this case, we can take \(r=2|x-x_0|\) in (\ref{lip}) and obtain  
\[
|\tilde{u}(x)-\tilde{u}(x_0)| \leq \mathrm{CK}|x-x_0|.
\]  
\item[(ii)] \(|x-x_0|\geq \frac{1}{2}\left(1-\tau\right)\tau\)\\
We have the obvious inequality  
\[
|\tilde{u}(x)-\tilde{u}(x_0)| \leq \frac{4}{\left(1-\tau\right)\tau}\cdot\|\tilde{u}\|_{L^{\infty}(\mathrm{B}_1)}\cdot|x-x_0|.
\] 
\end{itemize} 

 We may assume the existence of at least one point
 \(x_0\in \{\tilde{u}=0\}\cap\{x_n=0\}\cap \mathrm{B}_{5/16}\). Otherwise, we would have  
\[
\Delta_g\tilde{u}=0\quad\text{in}\quad \mathrm{B}_{5/16},
\]
and by the interior \(C^{1,\alpha}\) regularity estimate for \(g\)-harmonic functions in \(\mathrm{B}_{1/4}\subset \mathrm{B}_{5/16}\) (see Theorem 2.3 in \cite{BM21}), the result follows.

Given \(x, y \in \mathrm{B}_{1/4}\), let \(r = |x - y|\) and define  
\[
\rho := \min\left\lbrace \text{dist}(x, \{\tilde{u}=\psi\}), \text{dist}(y, \{\tilde{u}=\psi\})\right\rbrace.
\]  
Let \(x^*, y^* \in \{\tilde{u}=\psi\}\) such that \(\text{dist}(x, \{\tilde{u}=\psi\}) = |x - x^*|\) and \(\text{dist}(y, \{\tilde{u}=\psi\}) = |y - y^*|\), and assume, without loss of generality, that \(\rho = |x - x^*|\). Using the triangle inequality, we get  
\[
|y - x^*| \leq |y - x| + |x - x^*| \leq r + \rho,
\]  
so  
\[
|y - y^*| \leq |y - x^*| \leq r + \rho,
\]  
and therefore  
\[
|x^* - y^*| \leq 2(r + \rho).
\]  

Note that since there exists at least one point \(x_0 \in \{\tilde{u} = 0\} \cap \{x_n = 0\} \cap \mathrm{B}_{5/16}\) and \(\{\tilde{u} = 0\} \cap \{x_n = 0\} \subset \{\tilde{u}=\psi\}\), it follows that \(x^*, y^* \in \mathrm{B}_{7/8}\). 

Thus, we can treat the following cases:
\begin{itemize}
\item If \(\rho \leq 4r\), then  
\[
\begin{aligned}
|\tilde{u}(x) - \tilde{u}(y)| &\leq |\tilde{u}(x) - \tilde{u}(x^*)| + |\tilde{u}(y) - \tilde{u}(y^*)| + |\psi(x^*) - \psi(y^*)| \\
&\leq \mathrm{CK}\rho + \mathrm{CK}(r + \rho) + 2\mathrm{CK}(r + \rho).
\end{aligned}
\]
\item If \(\rho > 4r\), then in this case we have \(y \in \mathrm{B}_{\rho/4}(x)\). Moreover, note that  
\[
\mathrm{B}_{\rho/2}(x) \subset \mathrm{B}_1 \setminus \{\tilde{u}=\psi\}\subset \mathrm{B}_1 \setminus \{\tilde{u} = 0\} \cap \{x_n = 0\},
\]  
and since \(\tilde{u}\) is \(g\)-harmonic in this set, in particular, we have an interior estimate for the gradient (see for example in \cite[Lemma 2.7]{MW})  
\[
\sup_{\mathrm{B}_{\rho/4}} |\nabla \tilde{u}| \leq \frac{\mathrm{C}}{\rho} \cdot \operatornamewithlimits{osc}_{\mathrm{B}_{\rho/2}} \tilde{u}.
\]  

Furthermore, since for all \(z \in \mathrm{B}_{\rho/2}(x)\), we have  
\[
\tilde{u}(x^*) - |\tilde{u}(z) - \tilde{u}(x^*)| \leq \tilde{u}(z) \leq |\tilde{u}(z) - \tilde{u}(x^*)| + \tilde{u}(x^*),
\]  
it follows that \(\displaystyle\sup_{\mathrm{B}_{\rho/2}} \tilde{u} \leq \mathrm{CK}\rho + \psi(x^*)\) and \(\displaystyle\inf_{\mathrm{B}_{\rho/2}} \tilde{u} \geq -\mathrm{CK}\rho + \psi(x^*)\).  

Thus,  
\[
\operatornamewithlimits{osc}_{\mathrm{B}_{\rho/2}} \tilde{u} \leq 2\mathrm{CK}\rho.
\]
\end{itemize}
Thus, in any of these cases, we obtain for any \(x, y \in \mathrm{B}_{1/4}\)  
\[
|\tilde{u}(x) - \tilde{u}(y)| \leq \mathrm{CK}|x - y|.
\]

Finally, observe that the argument presented can be applied at any ball \(\mathrm{B}_r(z) \subset \mathrm{B}_1\) centered at a point \(z \in \mathrm{B}_{3/4} \cap \{x_n = 0\}\). By using a covering argument together with the Lipschitz estimate in the interior of \(\mathrm{B}_1^+\), for a new constant \(\mathrm{C_1} = \mathrm{C_1}(n, \delta_{0}, g_0, g(1)) > 0\) (possibly larger than the previous one), the following estimate holds  
\[
\|u\|_{C^{0,1}(\mathrm{B}_{3/4}^+)} \leq \mathrm{C_1}\|u\|_{L^\infty(\mathrm{B}_1^+)}
\]
as desired.
\end{proof}  
\section{Review of De Giorgi’s Theory on Regularity}\label{Section4}

In this part, we develop some De Giorgi-type regularity lemmas for a minimizer of the functional \eqref{eq1}. We begin with the following two technical lemmas. The first one is the following result, whose proof can be found in \cite[Lemma 6.1]{Giusti}.

\begin{lemma}\label{pre1}
Let $z(t)$ be a bounded non-negative function in the interval $[\rho, R].$ Assume that for $\rho\leq t\leq s\leq R$ we have
$$
z(t)\leq \left( \mathfrak{A}(t-s)^{-\alpha}+ \mathfrak{B}(t-s)^{-\beta} +\mathfrak{C} \right)+ \eta\cdot z(s),
$$
with $\mathfrak{A}, \mathfrak{B}, \mathfrak{C}\geq 0,\,\,\alpha>\beta>0$ and $\eta\in (0,1).$ Then,
$$
z(\rho)\leq \mathrm{C}(\alpha,\eta)\left( \mathfrak{A}(R-\rho)^{-\alpha}+ \mathfrak{B}(R-\rho)^{-\beta} +\mathfrak{C} \right), 
$$ where $\mathrm{C}(\alpha,\eta)= (1-\lambda)^{-\alpha}(1-\eta\lambda^{-\alpha})^{-1}$ and $\lambda\in (0,1)$ is chosen so that $\lambda^{-\alpha}\eta<1.$
\end{lemma}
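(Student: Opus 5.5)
We read the hypothesis with the natural sign convention: it holds for $\rho\le t<s\le R$, with $(s-t)$ in place of $(t-s)$, so that the negative powers make sense. The plan is the standard geometric iteration. We choose radii accumulating at $R$ in geometric steps, apply the hypothesis between consecutive radii, and iterate. The factor $\eta$ picked up at each step must beat the growth of $(s-t)^{-\alpha}$, which is exactly what the choice of $\lambda$ guarantees.

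Fix $\lambda\in(0,1)$ with $\eta\lambda^{-\alpha}<1$. Define
\[
t_0=\rho,\qquad t_{i+1}=t_i+(1-\lambda)\lambda^{i}(R-\rho).
\]
Then $t_i$ increases to $R$ and $t_{i+1}-t_i=(1-\lambda)\lambda^i(R-\rho)$. Applying the hypothesis with $t=t_i$ and $s=t_{i+1}$ gives
\[
z(t_i)\le \mathfrak{A}(1-\lambda)^{-\alpha}\lambda^{-i\alpha}(R-\rho)^{-\alpha}+\mathfrak{B}(1-\lambda)^{-\beta}\lambda^{-i\beta}(R-\rho)^{-\beta}+\mathfrak{C}+\eta\, z(t_{i+1}).
\]
Since $0<\beta<\alpha$ and $\lambda\in(0,1)$, we have $\lambda^{-i\beta}\le\lambda^{-i\alpha}$ and $(1-\lambda)^{-\beta}\le(1-\lambda)^{-\alpha}$. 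Also $\mathfrak{C}\le (1-\lambda)^{-\alpha}\lambda^{-i\alpha}\mathfrak{C}$. So each bracket is at most
\[
(1-\lambda)^{-\alpha}\lambda^{-i\alpha}\Big(\mathfrak{A}(R-\rho)^{-\alpha}+\mathfrak{B}(R-\rho)^{-\beta}+\mathfrak{C}\Big).
\]

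Iterating from $i=0$ to $k-1$ yields
\[
z(\rho)\le \eta^k z(t_k)+(1-\lambda)^{-\alpha}\Big(\mathfrak{A}(R-\rho)^{-\alpha}+\mathfrak{B}(R-\rho)^{-\beta}+\mathfrak{C}\Big)\sum_{i=0}^{k-1}(\eta\lambda^{-\alpha})^i .
\]
Because $z$ is bounded on $[\rho,R]$ and $\eta<1$, the term $\eta^k z(t_k)$ tends to $0$. The geometric series converges to at most $(1-\eta\lambda^{-\alpha})^{-1}$. Letting $k\to\infty$ gives the claim with $\mathrm{C}(\alpha,\eta)=(1-\lambda)^{-\alpha}(1-\eta\lambda^{-\alpha})^{-1}$.

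The only delicate point is the bookkeeping in the second step. The $\mathfrak{B}$ and $\mathfrak{C}$ terms must be absorbed into the same geometric factor $\lambda^{-i\alpha}$, so that a single convergent series controls all three terms. This uses $\alpha>\beta$, and boundedness of $z$ is what kills the remainder term.
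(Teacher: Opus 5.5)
Your proof is correct, and it is the same iteration used in Giusti's Lemma 6.1, which the paper cites in place of a proof: the radii $t_{i+1}=t_i+(1-\lambda)\lambda^{i}(R-\rho)$, the absorption of the $\mathfrak{B}$ and $\mathfrak{C}$ terms into the factor $(1-\lambda)^{-\alpha}\lambda^{-i\alpha}$ using $\alpha>\beta$, and the boundedness of $z$ to make $\eta^k z(t_k)\to 0$. Reading the hypothesis with $(s-t)$ and $t<s$ is the correct repair of the typo in the statement as written.
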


The following lemma is a slight variation of Lemma 7.1 from \cite{Giusti}, adapted to suit our specific case.

\begin{lemma}\label{pre2}
Let $\alpha>0$ and let $\{\psi_i\}_{i=1}^{\infty}$ be a sequence of real positive numbers such that 
$$
\psi_{i+1}\leq \mathrm{C}\mathrm{B}^i\psi_i^{1+\alpha},
$$
with $\mathrm{C}>0$ and $\mathrm{B}>1.$ If $\psi_1\leq \mathrm{C}^{-\frac{1}{\alpha}}\mathrm{B}^{-\frac{1+\alpha}{\alpha^2}},$ we have 
\begin{equation}\label{de1}
\psi_i\leq \mathrm{B}^{-\frac{i-1}{\alpha}}\psi_1.
\end{equation}
In particular, $\lim\limits_{i\to +\infty}\psi_i=0.$
\end{lemma}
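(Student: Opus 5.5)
We prove Lemma~\ref{pre2} by induction on $i$, showing that the bound \eqref{de1} holds for every $i\ge 1$. The case $i=1$ is trivial, since then the factor $\mathrm{B}^{-\frac{i-1}{\alpha}}$ equals $1$.

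For the inductive step, assume $\psi_i\le \mathrm{B}^{-\frac{i-1}{\alpha}}\psi_1$. Using the recursive inequality and the induction hypothesis,
\[
\psi_{i+1}\le \mathrm{C}\mathrm{B}^{i}\psi_i^{1+\alpha}\le \mathrm{C}\mathrm{B}^{i}\,\mathrm{B}^{-\frac{(i-1)(1+\alpha)}{\alpha}}\psi_1^{1+\alpha}
=\mathrm{B}^{-\frac{i}{\alpha}}\psi_1\cdot\Big(\mathrm{C}\,\psi_1^{\alpha}\,\mathrm{B}^{\,i-\frac{(i-1)(1+\alpha)}{\alpha}+\frac{i}{\alpha}}\Big).
\]
It therefore suffices to show that the bracket is at most $1$.

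The exponent of $\mathrm{B}$ inside the bracket is
\[
i-\frac{(i-1)(1+\alpha)}{\alpha}+\frac{i}{\alpha}
=\frac{i\alpha-(i-1)-(i-1)\alpha+i}{\alpha}=\frac{1+\alpha}{\alpha}.
\]
It is independent of $i$, and this cancellation is the only point to check. The smallness assumption $\psi_1\le \mathrm{C}^{-1/\alpha}\mathrm{B}^{-\frac{1+\alpha}{\alpha^2}}$ gives $\psi_1^{\alpha}\le \mathrm{C}^{-1}\mathrm{B}^{-\frac{1+\alpha}{\alpha}}$. Hence $\mathrm{C}\psi_1^{\alpha}\mathrm{B}^{\frac{1+\alpha}{\alpha}}\le 1$, and so $\psi_{i+1}\le \mathrm{B}^{-\frac{i}{\alpha}}\psi_1$, which closes the induction.

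Since $\mathrm{B}>1$ and $\alpha>0$, we have $\mathrm{B}^{-\frac{i-1}{\alpha}}\to 0$ as $i\to\infty$. Combined with $\psi_i>0$, this gives $\lim_{i\to\infty}\psi_i=0$. There is no real obstacle; the only step requiring care is the exponent arithmetic above.
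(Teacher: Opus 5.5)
Your proposal is correct and follows the same route as the paper: induction on $i$, inserting the hypothesis into the recursion and factoring out $\bigl(\mathrm{C}\mathrm{B}^{\frac{1+\alpha}{\alpha}}\psi_1^{\alpha}\bigr)\mathrm{B}^{-\frac{i}{\alpha}}\psi_1$. The smallness assumption makes that factor at most $1$, which closes the induction. Your exponent computation is accurate, and you spell it out more carefully than the paper does.
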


\begin{proof}
We proceed by induction. The case $i=1$ is obviously true. Assume now that it holds for $i.$ We have 
$$
\psi_{i+1}\leq \mathrm{C}\mathrm{B}^i\left(\mathrm{B}^{-\frac{i-1}{\alpha}}\psi_1 \right)^{1+\alpha} \leq \left(\mathrm{C}\mathrm{B}^{\frac{1+\alpha}{\alpha}}\psi_1^\alpha\right)\mathrm{B}^{-\frac{i}{\alpha}}\psi_1, $$
and \eqref{de1} follows for $i+1.$
\end{proof}

\subsection{Regularity lemmas}

Let $u$ be a minimizer of \eqref{eq1} in the admissible class $\mathcal{G}$. Since we know that $u\in C^{0,1}(\mathrm{T}_{3/4}),$ for each $m \in \{1, 2, \dots, n-1\}$, define the function $v = u_{x_m}.$ Then we have $v=0$ in $\mathrm{T}_{3/4}\cap\{u=0\}.$ Now  we need once again to turn our attention to the even extension of the minimizer function $u$ defined in \eqref{par}. For  $x_n>0$ we have
$$\tilde{u}_{x_m}(x',-x_n)= u_{x_m}(x',x_n).$$
Therefore, if we define $\tilde{v}:=\tilde{u}_{x_m}$ in $\mathrm{B}_1$ we have 
\begin{equation}\label{Eq7}
    \tilde{v}=v\quad\text{in}\quad \mathrm{B}_1^+\quad\text{and}\quad\tilde{v}(x',x_n)=v(x',-x_n)\quad\text{in}\quad \mathrm{B}_1^-.
\end{equation}
Since $\tilde{u}$ is $g-$harmonic in $\mathrm{B}_1\setminus\{u=0\}\cap\{x_n=0\},$ it follows that for any $k>0,$ we have $\tilde{u}\in C^{1,\alpha}_{loc}(\{\tilde{v}\geq k\}\cap \mathrm{B}_1).$  Moreover, in the set $\{\tilde{v}\geq k\}\cap \mathrm{B}_{3/4},$ we have
 \begin{equation}\label{quoc}
     \frac{g(k)}{C_1||u||_{L^\infty(\mathrm{B}_1^+)}}\leq \frac{g(|\nabla\tilde{u}|)}{|\nabla\tilde{u}|}\leq \frac{g\left(C_1||u||_{L^\infty(\mathrm{B}_1^+)}\right)}{k}. 
 \end{equation}
Therefore, Schauder theory implies that $\tilde{u}\in C^{\infty}_{loc}(\{\tilde{v}\geq k\}\cap \mathrm{B}_{3/4}).$ Thus, we may differentiate the equation
 $$ \operatorname{div}\left(\frac{g(|\nabla\tilde{u}|)}{|\nabla\tilde{u}|}\nabla\tilde{u}\right)=0\quad\text{in}\quad \{\tilde{v}\geq k\}\cap \mathrm{B}_{3/4}, $$
 we would get
 $$\left(a^{ij}\frac{g(|\nabla\tilde{u}|)}{|\nabla\tilde{u}|}\tilde{u}_{x_jx_m} \right)_{x_i}=0\quad\text{in}\quad \{\tilde{v}\geq k\}\cap \mathrm{B}_{3/4},$$
where the coefficients are given by
\[
a^{ij}(\nabla u) =
\begin{cases}
\begin{alignedat}{2}
    &u_{x_i} u_{x_j} \left( \frac{g'(|\nabla u|)}{g(|\nabla u|) |\nabla u|} - \frac{1}{|\nabla u|^2} \right) + \delta_{ij} 
    &&\quad \text{if} \quad |\nabla u| \neq 0; \\
    &\delta_{ij} 
    &&\quad \text{if} \quad |\nabla u| = 0.
\end{alignedat}
\end{cases}
\]
Direct calculations show that these coefficients satisfy for all $\xi\in \mathbb{R}^n\setminus\{0\}$
$$\min\{1,\delta_0\}|\xi|^2\leq a^{ij}\xi_i\xi_j \leq \max\{1,g_0\}|\xi|^2. $$
In what follows, for the sake of simplicity, we will use the following notations:
 $$\lambda=\min\{1,\delta_0\},\quad \Lambda=\max\{1,g_0\},\quad\text{and}\quad \mathrm{A}(k)=\{x\in \mathrm{B}_1^+;\,\,v(x)\geq k\}.$$
 This motivates us to consider test functions $\phi\in C_0^{\infty}(\{\tilde{v}\geq k\}\cap \mathrm{B}_{3/4}),$ and integration by parts yields
 $$0=\int_{\mathrm{B}_{3/4}}\frac{g(|\nabla\tilde{u}|)}{|\nabla\tilde{u}|}\nabla\tilde{u}\cdot\nabla\phi_{x_m} = -\int_{\mathrm{B}_{3/4}}\frac{g(|\nabla\tilde{u}|)}{|\nabla\tilde{u}|}a^{ij}\tilde{u}_{x_jx_m}\phi_{x_i}. $$
 A standard approximation argument shows that this equation remains valid for any test function $\phi\in W^{1,G}(\mathrm{B}_t)$ with support contained in $\{\tilde{v}\geq k\}\cap \mathrm{B}_{3/4}$ and therefore we get
\begin{equation}\label{div}
\int_{\mathrm{B}_{3/4}}\frac{g(|\nabla \tilde{u}|)}{|\nabla \tilde{u}|}a^{ij}\tilde{v}_{x_j}\phi_{x_i}=0.
\end{equation}
This is the key property satisfied by the minimizers, from which all the results in this section follow.

We begin by studying the gradient $\nabla v$ in the sets $\mathrm{A}(k)$. In particular, we establish in these sets a decay estimate involving $(v - k)^{+}$, which is the content of the next result. More precisely, it is an estimate that controls the energy functional in the $L^2$ norm. In order to make the dependence of the constants involved explicit, we will provide proofs for some of them.

\begin{lemma}\label{degeo1}
Let $v=u_{x_m}$ for $m=1,2,...,n-1$ where $u$ is a minimizer of the functional \eqref{eq1} on the admissible class \(\mathcal{G}\). Then for any $k>0$ and $0<s< t<\frac{3}{4}$ we have
$$\int_{\mathrm{A}(k)\cap \mathrm{B}_s^+}|\nabla v|^2 \leq \frac{\mathrm{C}_2}{(t-s)^2}\frac{2\tilde{k}\mathrm{C}_3 + \mathrm{C}_3^2}{\tilde{k}^2}\int_{\mathrm{A}(k)\cap \mathrm{B}_t^+}((v-k)^+)^2,$$
where $\tilde{k}=\min\{k^{1+\delta_0}, k^{1+g_0}\}$
$$
\mathrm{C}_3=\frac{\Lambda n^2}{\lambda}\max\{(\mathrm{C}_1||u||_{L^\infty(\mathrm{B}^{+}_{1})})^{1+g_0}, (\mathrm{C}_1||u||_{L^\infty(\mathrm{B}^{+}_{1})})^{1+\delta_0}\},
$$
and for $\lambda\in (0,1)$ such that $\frac{\mathrm{C}_3}{\mathrm{C}_3+\tilde{k}}\lambda^{-2}<1,$ we have 
$$\mathrm{C}_2= (1-\lambda)^{-1}\left(1- \frac{\mathrm{C}_3}{\mathrm{C}_3+\tilde{k}}\lambda^{-2}\right)^{-1}.$$
\end{lemma}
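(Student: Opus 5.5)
This is a Caccioppoli inequality for $v$ on the superlevel sets $\mathrm{A}(k)$. The plan is to test the differentiated equation \eqref{div} with a cutoff times $(\tilde v-k)^+$, absorb the bad term with Young's inequality, and then remove the dependence on the larger radius with the iteration Lemma~\ref{pre1}. The constant $\mathrm{C}_2$ in the statement, which has the form $(1-\lambda)^{-1}(1-\eta\lambda^{-2})^{-1}$ with $\eta=\mathrm{C}_3/(\mathrm{C}_3+\tilde k)$ and exponent $\alpha=2$, tells us how this iteration must be set up.

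\textbf{Energy inequality.} Fix $s\le r<\rho\le t$ and a cutoff $\zeta\in C_0^\infty(\mathrm{B}_\rho)$ with $0\le\zeta\le1$, $\zeta\equiv1$ on $\mathrm{B}_r$ and $|\nabla\zeta|\le 2/(\rho-r)$. Take $\phi=\zeta^2(\tilde v-k)^+$, which is supported in $\{\tilde v\ge k\}\cap\mathrm{B}_{3/4}$, and insert it into \eqref{div}. Write $\mu(x)=g(|\nabla\tilde u|)/|\nabla\tilde u|$. This gives
\[
\int \mu\, a^{ij}\tilde v_{x_j}\tilde v_{x_i}\zeta^2=-2\int \mu\, a^{ij}\tilde v_{x_j}\zeta_{x_i}\zeta(\tilde v-k)^+ .
\]
On $\{\tilde v\ge k\}$ we have $k\le|\nabla\tilde u|\le \mathrm{C}_1\|u\|_{L^\infty}$ by Proposition~\ref{Lipschitzregularity}. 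Using the growth control of $g$ from the Remark after (H1), I would bound $\mu$ two-sidedly, in the spirit of \eqref{quoc}. The lower bound should be $\mu\gtrsim\tilde k=\min\{k^{1+\delta_0},k^{1+g_0}\}$, up to normalization, since $g(t)t\approx G(t)$ scales like $t^{1+\delta_0}$ or $t^{1+g_0}$. The upper bound should be $\mu\lesssim \max\{(\mathrm{C}_1\|u\|)^{1+g_0},(\mathrm{C}_1\|u\|)^{1+\delta_0}\}$. The ellipticity of $a^{ij}$ (constants $\lambda,\Lambda$) together with the crude bound $|a^{ij}\xi_j\eta_i|\le n^2\Lambda|\xi||\eta|$ then yields
\[
\tilde k\lambda\int_{\mathrm{B}_r}|\nabla (\tilde v-k)^+|^2\le \Lambda n^2 M\int \zeta|\nabla(\tilde v-k)^+||\nabla\zeta|(\tilde v-k)^+ ,
\]
where $M$ is the upper bound for $\mu$; note that $\Lambda n^2 M/\lambda$ is exactly $\mathrm{C}_3$.

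\textbf{Hole filling.} Rather than absorbing completely, I would apply Young's inequality to the right-hand side and bound $\zeta|\nabla(\tilde v-k)^+|$ on $\mathrm{B}_\rho$ by the full integral over $\mathrm{B}_\rho$. This gives, roughly,
\[
\tilde k\, z(r)\le \mathrm{C}_3\bigl(z(\rho)-z(r)\bigr)+\frac{c(2\tilde k\mathrm{C}_3+\mathrm{C}_3^2)}{\tilde k}\,\frac{1}{(\rho-r)^2}\int_{\mathrm{B}_t}((\tilde v-k)^+)^2 ,
\]
where $z(r)=\int_{\mathrm{A}(k)\cap \mathrm{B}_r}|\nabla\tilde v|^2$. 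Adding $\mathrm{C}_3 z(r)$ to both sides and dividing by $\mathrm{C}_3+\tilde k$ gives
\[
z(r)\le \frac{\mathrm{C}_3}{\mathrm{C}_3+\tilde k}\,z(\rho)+\mathfrak A(\rho-r)^{-2},
\]
with $\mathfrak A$ of order $\frac{2\tilde k\mathrm{C}_3+\mathrm{C}_3^2}{\tilde k^2}\int_{\mathrm{B}_t}((\tilde v-k)^+)^2$. Lemma~\ref{pre1}, applied with $\eta=\mathrm{C}_3/(\mathrm{C}_3+\tilde k)<1$, $\alpha=2$ and $\mathfrak B=\mathfrak C=0$, then gives the claimed bound with the stated $\mathrm{C}_2$.

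\textbf{Passing to the half ball and main difficulty.} Since $\tilde v$ is even and the integrals over $\mathrm{B}^\pm$ coincide by \eqref{Eq7}, I would restrict the resulting inequality to $\mathrm{B}^+$; the factor $2$ cancels. The hard step is justifying $\phi$ as a test function. It is only in $W^{1,2}$ near $\partial\{\tilde v\ge k\}$, and its support touches the level set $\{\tilde v=k\}$ rather than lying compactly inside $\{\tilde v\ge k\}$. I would handle this by testing with $\phi_\varepsilon=\zeta^2(\tilde v-k-\varepsilon)^+$, whose support is compactly contained in the open set $\{\tilde v>k\}$ where $\tilde u$ is smooth, and then letting $\varepsilon\to0$ by monotone convergence. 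A secondary difficulty is tracking the scaling constants from $g(1)$ in the bounds on $\mu$; these I would absorb into the normalization of Remark~\ref{scaling}.
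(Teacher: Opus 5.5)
Your proposal is correct and is essentially the paper's argument. You test \eqref{div} with a cutoff times $(\tilde v-k)^+$, use the ellipticity of $a^{ij}$ together with \eqref{quoc} and the growth control of $g$ to trade the weight $\mu=g(|\nabla\tilde u|)/|\nabla\tilde u|$ for the factor $\mathrm{C}_3/\tilde k$, apply Young's inequality, and close with Lemma~\ref{pre1} for $\alpha=2$ and $\eta=\mathrm{C}_3/(\mathrm{C}_3+\tilde k)$. Your hole filling on the annulus is just a cleaner way to produce this contraction factor than the paper's tuned choice of $\epsilon$.

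One imprecision concerns the separate bounds $\mu\gtrsim\tilde k$ and $\mu\lesssim\max\{(\mathrm{C}_1\|u\|_{L^\infty(\mathrm{B}_1^+)})^{1+g_0},(\mathrm{C}_1\|u\|_{L^\infty(\mathrm{B}_1^+)})^{1+\delta_0}\}$. These are not right as stated: for $g(t)=t^{1/2}$, $\mu=|\nabla\tilde u|^{-1/2}$ can be as large as $k^{-1/2}$ on $\{\tilde v\ge k\}$, so the upper bound fails as $k\to0$. What you need, and what \eqref{quoc} actually gives, is only the ratio bound $\sup\mu/\inf\mu\le\max\{\cdots\}/\tilde k$. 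In that ratio the common $k$-dependent factor $g(1)/(k\,\mathrm{C}_1\|u\|_{L^\infty(\mathrm{B}_1^+)})$ cancels, so no appeal to Remark~\ref{scaling} is needed.
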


\begin{proof}

We start by taking a function $\eta \in \mathrm{C}_0^{\infty}(\mathrm{B}_t)$ such that $0 \leq \eta \leq 1$, $\eta \equiv 1$ in $\mathrm{B}_s$, and $\|\nabla \eta\|_{L^\infty(\mathrm{B}_t)} \leq \frac{2}{t-s}$. We define the test function $\phi=\eta(\tilde{v}-k)^+$. As $\operatorname{supp}(\phi)\subset \mathrm{B}_t\cap \{\tilde{v}\geq k\}$ we conclude from \ref{div} that
$$
0=\int_{\mathrm{B}_t}\frac{g(|\nabla\tilde{u}|)}{|\nabla\tilde{u}|}a^{ij}\tilde{v}_{x_j}\eta_{x_i}(\tilde{v}-k)^+ + \int_{\mathrm{B}_t}\frac{g(|\nabla\tilde{u}|)}{|\nabla\tilde{u}|}a^{ij}\tilde{v}_{x_j}(\tilde{v}-k)^+_{x_i}\eta,
$$
which implies
\begin{equation}\label{Eq2}
\int_{\mathrm{B}_t\cap\{\tilde{v}\geq k\}}\frac{g(|\nabla\tilde{u}|)}{|\nabla\tilde{u}|}a^{ij}\tilde{v}_{x_j}\tilde{v}_{x_i}\eta = -\int_{\mathrm{B}_t}\frac{g(|\nabla\tilde{u}|)}{|\nabla\tilde{u}|}a^{ij}\tilde{v}_{x_j}\eta_{x_i}(\tilde{v}-k)^+.
\end{equation}
For the left-hand side of \eqref{Eq2} we have
$$\int_{\mathrm{B}_t\cap \{\tilde{v}\geq k\}}\frac{g(|\nabla\tilde{u}|)}{|\nabla\tilde{u}|}a^{ij}\tilde{v}_{x_j}\tilde{v}_{x_i}\eta \geq \lambda\int_{\mathrm{B}_s\cap\{\tilde{v}\geq k\}}\frac{g(|\nabla\tilde{u}|)}{|\nabla\tilde{u}|}|\nabla \tilde{v}|^2. $$
Meanwhile, on the right-hand side of \eqref{Eq2} we estimate
\begin{align*}
\left\vert \int_{\mathrm{B}_t}\frac{g(|\nabla\tilde{u}|)}{|\nabla\tilde{u}|}a^{ij}\tilde{v}_{x_j}\eta_{x_i}(\tilde{v}-k)^+ \right\vert 
    &\leq \Lambda n^2 \int_{\mathrm{B}_t}\frac{g(|\nabla\tilde{u}|)}{|\nabla\tilde{u}|}|\nabla \tilde{v}||\nabla\eta|(\tilde{v}-k)^+.
\end{align*}
Therefore, combining the two estimates, we obtain:
\begin{equation}\label{Eq3}
 \lambda\int_{\mathrm{B}_s\cap \{\tilde{v}\geq k\}}\frac{g(|\nabla\tilde{u}|)}{|\nabla\tilde{u}|}|\nabla \tilde{v}|^2 \leq \Lambda n^2 \int_{\mathrm{B}_t}\frac{g(|\nabla\tilde{u}|)}{|\nabla\tilde{u}|}|\nabla \tilde{v}||\nabla\eta|(\tilde{v}-k)^+. 
\end{equation}
 Now, we use that estimate \eqref{quoc} holds in $\mathrm{B}_{3/4}\cap \{\tilde{v}\geq k\},$ then, from \eqref{Eq3}, we get that
\begin{eqnarray}\label{estofnablav}
\int_{\mathrm{B}_s\cap \{\tilde{v}\geq k\}}|\nabla\tilde{v}|^2\leq \frac{\mathrm{C}_3}{\tilde{k}}\int_{\mathrm{B}_t\cap \{\tilde{v}\geq k\}}|\nabla \tilde{v}|\cdot|\nabla\eta|(\tilde{v}-k)^+,
\end{eqnarray} 
where 
$$
\tilde{k}=\min\{k^{1+\delta_0}, k^{1+g_0}\}\,\, \text{and}\,\, \mathrm{C}_3=\frac{\Lambda n^2}{\lambda}\max\{(\mathrm{C}_1||u||_{L^\infty(\mathrm{B}^{+}_{1})})^{1+g_0}, (\mathrm{C}_1||u||_{L^\infty(\mathrm{B}^{+}_{1})})^{1+\delta_0}\}.$$
Hereafter, given \(\epsilon>0\) to be chosen a \textit{posteriori}, we can apply the Cauchy inequality with epsilon for obtain in \eqref{estofnablav}that  
$$\int_{\mathrm{B}_s\cap \{\tilde{v}\geq k\}}|\nabla \tilde{v}|^2\leq  \frac{\mathrm{C}_3}{\tilde{k}}\epsilon\int_{\mathrm{B}_t\cap \{\tilde{v}\geq k\}}|\nabla \tilde{v}|^2 + \frac{\mathrm{C}_3}{\tilde{k}4\epsilon}\int_{\mathrm{B}_t\cap \{\tilde{v}\geq k\}}|\nabla\eta|^2((\tilde{v}-k)^+)^2. $$
Now we add in bouth sides
$$\frac{\mathrm{C}_3}{\tilde{k}}\epsilon\int_{\mathrm{B}_s\cap \{\tilde{v}\geq k\}}|\nabla \tilde{v}|^2,$$
and multiply by $$\frac{\tilde{k}}{\tilde{k}+\epsilon \mathrm{C}_3},$$
to obtain the following estimate
\begin{equation}\label{Eq4}
    \int_{\mathrm{B}_s\cap \{\tilde{v}\geq k\}}|\nabla \tilde{v}|^2\leq \frac{2\mathrm{C}_3}{\tilde{k}+ \epsilon \mathrm{C}_3}\epsilon\int_{\mathrm{B}_t\cap \{\tilde{v}\geq k\}}|\nabla \tilde{v}|^2 + \frac{\mathrm{C}_3}{(\tilde{k}+\epsilon \mathrm{C}_3)4\epsilon}\int_{\mathrm{B}_t\cap \{\tilde{v}\geq k\}}|\nabla\eta|^2((\tilde{v}-k)^+)^2.
\end{equation}
We choose $\epsilon>0$ such that 
$$
\frac{2\mathrm{C}_3}{\tilde{k}+ \epsilon \mathrm{C}_3}\epsilon= \frac{\mathrm{C}_3}{\tilde{k}+\mathrm{C}_3},
$$
specifically,
$$
\epsilon= \frac{\mathrm{C}_3\tilde{k}}{2\mathrm{C}_3\tilde{k}+ \mathrm{C}_3^2}.
$$
Therefore, inequality \eqref{Eq4} with this choice of $\epsilon,$ the fact that $||\nabla\eta||_{L^\infty(\mathrm{B}_{t})}\leq \frac{2}{t-s}$ and some obvious estimates yields that
$$\int_{\mathrm{B}_s\cap \{\tilde{v}\geq k\}}|\nabla \tilde{v}|^2\leq \frac{\mathrm{C}_3}{\tilde{k}+\mathrm{C}_3}\int_{\mathrm{B}_t\cap \{\tilde{v}\geq k\}}|\nabla \tilde{v}|^2 + \frac{2\tilde{k}\mathrm{C}_3 + \mathrm{C}_3^2}{\tilde{k}^2}\frac{1}{(t-s)^2}\int_{\mathrm{B}_t\cap \{\tilde{v}\geq k\}}((\tilde{v}-k)^+)^2. $$

Now we apply Lemma \ref{pre1} with $\alpha=2,\,\,\,\eta= \frac{\mathrm{C}_3}{\tilde{k}+\mathrm{C}_3},$
$$z(t)=\int_{\mathrm{B}_t\cap \{\tilde{v}\geq k\}}|\nabla \tilde{v}|^2,\quad \mathfrak{A}=\frac{2\tilde{k}\mathrm{C}_3+ \mathrm{C}_3^2}{\tilde{k}^2}\int_{\mathrm{B}_t\cap \{\tilde{v}\geq k\}}((\tilde{v}-k)^+)^2,\quad \mathfrak{B}=\mathfrak{C}=0, $$
and this completes the proof of the lemma using \eqref{Eq7}.
\end{proof}

As a consequence of the previous result, we obtain a control over the region where $v$ is large.
\begin{lemma} \label{degeo2}
Let $v=u_{x_m}$ for $m=1,2,...,n-1$ where $u$ is a minimizer of the functional \eqref{eq1} on the admissible class \(\mathcal{G}\). If $v\leq 1,$ and $|\mathrm{A}(k_0)\cap \mathrm{B}_r^+|\leq \varrho|\mathrm{B}_r^+|$ for some $\varrho\in (0,1)$ then for every large constant $C>0$ there exists a constant $k\in (0,1)$ such that
$$
|\mathrm{A}(k)\cap \mathrm{B}_r^+| < \frac{|\mathrm{B}_r^+|}{C}. 
$$
\end{lemma}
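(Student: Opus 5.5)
This is the classical De Giorgi "measure shrinking" step, and the plan is to prove it in the form of \cite[Lemma 7.x]{Giusti}. Fix the levels $k_j = 1 - 2^{-j}(1-k_0)$ for $j\ge 0$; they increase from $k_0$ toward $1$. The aim is to show that the differences $|\mathrm{A}(k_j)\cap \mathrm{B}_r^+| - |\mathrm{A}(k_{j+1})\cap \mathrm{B}_r^+|$ cannot all stay large. Throughout we work with the even extension $\tilde v$ on $\mathrm{B}_r$, as in \eqref{Eq7}, so that the measures in $\mathrm{B}_r$ are twice those in $\mathrm{B}_r^+$. All $k_j$ lie in $[k_0,1)$, so $\tilde k_j=\min\{k_j^{1+\delta_0},k_j^{1+g_0}\}\ge k_0^{1+g_0}$. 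Hence the constant in Lemma \ref{degeo1}, namely $\mathrm{C}_2(2\tilde k\mathrm{C}_3+\mathrm{C}_3^2)/\tilde k^2$, is bounded uniformly in $j$ by a constant $M$ depending only on $k_0$, $\mathrm{C}_1$, $\|u\|_{L^\infty}$, $n$, $\delta_0$ and $g_0$.

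First step: apply Lemma \ref{degeo1} at level $k_j$ with radii $s=r$ and $t=2r$ (assume $2r<3/4$), using $0\le (v-k_j)^+\le 1-k_j = 2^{-j}(1-k_0)$ because $v\le1$. This gives
\[
\int_{\mathrm{A}(k_j)\cap \mathrm{B}_r^+}|\nabla v|^2\le \frac{M}{r^2}\,4^{-j}(1-k_0)^2\,|\mathrm{B}_{2r}^+| \le \mathrm{C}\,4^{-j}r^{n-2}.
\]
Second step: apply the De Giorgi isoperimetric inequality \cite[Lemma 3.5]{Giusti} on $\mathrm{B}_r$ to $\tilde v$ between the levels $k_j<k_{j+1}$. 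This is where the hypothesis $|\mathrm{A}(k_0)\cap \mathrm{B}_r^+|\le\varrho|\mathrm{B}_r^+|$ enters. Since the sets $\mathrm{A}(k_j)$ decrease, the complement $\{\tilde v<k_j\}\cap \mathrm{B}_r$ has measure at least $(1-\varrho)|\mathrm{B}_r|$. So
\[
(k_{j+1}-k_j)\,|\mathrm{A}(k_{j+1})\cap \mathrm{B}_r|^{\frac{n-1}{n}}\le \frac{\mathrm{C}(n)\,r^n}{(1-\varrho)|\mathrm{B}_r|}\int_{(\mathrm{A}(k_j)\setminus \mathrm{A}(k_{j+1}))\cap \mathrm{B}_r}|\nabla \tilde v|.
\]
Apply Cauchy--Schwarz to the right-hand side and insert the first-step bound. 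The factors $2^{-j}$ cancel, since $k_{j+1}-k_j = 2^{-j-1}(1-k_0)$. Squaring then yields
\[
|\mathrm{A}(k_{j+1})\cap \mathrm{B}_r^+|^{\frac{2(n-1)}{n}}\le \mathrm{C}_4\, r^{n-2}\,\big|(\mathrm{A}(k_j)\setminus \mathrm{A}(k_{j+1}))\cap \mathrm{B}_r^+\big|,
\]
with $\mathrm{C}_4$ independent of $j$.

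Third step: sum over $j=0,\dots,N-1$. The sets on the right are disjoint subsets of $\mathrm{B}_r^+$, and $|\mathrm{A}(k_N)|\le|\mathrm{A}(k_{j+1})|$ for $j+1\le N$. Therefore
\[
N\,|\mathrm{A}(k_N)\cap \mathrm{B}_r^+|^{\frac{2(n-1)}{n}}\le \mathrm{C}_4 r^{n-2}|\mathrm{B}_r^+|,
\]
that is, $|\mathrm{A}(k_N)\cap\mathrm{B}_r^+|\le (\mathrm{C}_5/N)^{\frac{n}{2(n-1)}}|\mathrm{B}_r^+|$, where the powers of $r$ match. Given $C$, choose $N$ so large that $(\mathrm{C}_5/N)^{n/(2(n-1))}<1/C$, and set $k=k_N\in(0,1)$.

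The main obstacle is that the constant in Lemma \ref{degeo1} degenerates as $k\to0$ through $\tilde k$. This is handled by keeping every level at least $k_0>0$, which requires $k_0>0$ (implicit in the statement). A secondary concern is the geometric setup: we need a $\mathrm{B}_{2r}$ inside $\mathrm{B}_{3/4}$, or else we shrink the radii to $s=r/2$, $t=r$ and run the isoperimetric inequality on $\mathrm{B}_{r/2}$, accepting harmless dimensional constants. We also need the even extension so that the De Giorgi isoperimetric lemma is applied on a full ball rather than a half ball.
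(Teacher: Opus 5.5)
Your proposal is correct and is essentially the argument the paper relies on: the paper omits the proof and defers to Lemma 3.4 of Andersson--Mikayelyan, which is this same De Giorgi measure-shrinking scheme. That scheme uses the isoperimetric inequality between the levels $k_j=1-2^{-j}(1-k_0)$, Cauchy--Schwarz, the Caccioppoli estimate of Lemma~\ref{degeo1} with its constant kept uniform because every level stays above $k_0$, and summation over $j$. One correction concerns your fallback for large $r$, which matters because the paper applies the lemma at $r=1/2$, where $2r<3/4$ fails: shrinking to $\mathrm{B}_{r/2}$ does not work, since both the measure hypothesis and the conclusion live on $\mathrm{B}_r$, but it is also unnecessary. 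Keep the isoperimetric inequality on $\mathrm{B}_r$ and apply Lemma~\ref{degeo1} with $s=r$ and any $t\in(r,3/4)$; the only cost is that the constant depends on $3/4-r$, which is harmless for the fixed radius $r=1/2$ used in the proof of Theorem~\ref{T1}.
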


\begin{proof}
    Having established Lemma \ref{degeo1}, the proof of this result follows precisely the same steps as Lemma 3.4 in \cite{AM}, and is therefore omitted.
\end{proof}

 The final result is an estimate controlling the supremum of the partial derivatives $v=u_{x_m}.$ 
\begin{lemma}\label{degeo3}
    Let $v=u_{x_m}$ for $m=1,2,...,n-1$ where $u$ is a minimizer of the functional \eqref{eq1} on the admissible class \(\mathcal{G}\). Then for $k_0>0$ and $0<\rho<\frac{3}{4}$ 
    $$\sup_{\mathrm{B}_{\frac{\rho}{2}}^+}v \leq \mathrm{C}_4\left(\frac{1}{\rho^n}\int_{\mathrm{B}_\rho^+}((v-k_0)^+)^2 \right)^{\frac{1}{2}}\left(\frac{|A(k_0)\cap \mathrm{B}_\rho^+|}{\rho^n}\right)^{\frac{\alpha}{2}} + k_0, $$
    where $\alpha(\alpha +1)=\frac{2}{n}$ and $\mathrm{C}_4>0$ depends only on  $n, \mathrm{C}_2$ and $\mathrm{C}_3.$
\end{lemma}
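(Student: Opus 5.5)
This is the classical De Giorgi $L^\infty$ iteration, run on the even extension $\tilde v$ given by \eqref{Eq7}, with Lemma \ref{degeo1} as the Caccioppoli inequality and Lemma \ref{pre2} to close the iteration. Since $|\mathrm A(k)\cap\mathrm B_\rho^+|$ is half of $|\{\tilde v\ge k\}\cap \mathrm B_\rho|$ and the integrals of $((v-k)^+)^2$ also double under the reflection, it suffices to prove the estimate for $\tilde v$ on full balls. We may also assume $k_0$ is bounded below, or absorb the $k$-dependence of the constants: they involve $\tilde k$, and for $k\ge k_0$ we have $\tilde k\ge \tilde k_0$. So the factor $\frac{2\tilde k \mathrm C_3+\mathrm C_3^2}{\tilde k^2}$ and $\mathrm C_2$ are bounded uniformly for all levels $k\ge k_0$, by quantities depending on $\mathrm C_2,\mathrm C_3$ at level $k_0$. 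This monotonicity is the point to check carefully, because it is why the constant $\mathrm C_4$ depends only on $n,\mathrm C_2,\mathrm C_3$.

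\textbf{Step 1: one-step inequality.} For levels $h>k\ge k_0$ and radii $\rho/2\le s<t\le\rho$, take a cutoff $\zeta$ equal to $1$ on $\mathrm B_{s}$, supported in $\mathrm B_{(s+t)/2}$, with $|\nabla\zeta|\le 4/(t-s)$. Apply the Sobolev inequality to $\zeta(\tilde v-h)^+\in W^{1,2}_0$ (with exponent $2^*$, or any exponent $2(1+\alpha)$ admissible by Sobolev when $n=2$), then Hölder, to get
\[
\int_{\mathrm B_s}((\tilde v-h)^+)^2\le \mathrm C\,|\{\tilde v\ge h\}\cap\mathrm B_{t}|^{\frac 2n}\int_{\mathrm B_{(s+t)/2}}\big(|\nabla(\tilde v-h)^+|^2+|\nabla\zeta|^2((\tilde v-h)^+)^2\big).
\]
Lemma \ref{degeo1}, used between radii $(s+t)/2$ and $t$, bounds the gradient term by $\mathrm C(t-s)^{-2}\int_{\mathrm B_t}((\tilde v-k)^+)^2$. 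Chebyshev gives $|\{\tilde v\ge h\}\cap\mathrm B_t|\le (h-k)^{-2}\int_{\mathrm B_t}((\tilde v-k)^+)^2$. Writing $a(h,s)=|\{\tilde v\ge h\}\cap\mathrm B_s|$ and $u(h,s)=\int_{\mathrm B_s}((\tilde v-h)^+)^2$, this yields
\[
u(h,s)\le \frac{\mathrm C}{(t-s)^2}\,u(k,t)\,a(k,t)^{\frac2n},\qquad a(h,s)\le \frac{u(k,t)}{(h-k)^2}.
\]

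\textbf{Step 2: iteration.} Set $\varphi(h,s)=u(h,s)^{\frac12}a(h,s)^{\frac\alpha2}$. The relation $\alpha(\alpha+1)=\frac2n$ is exactly the exponent balance that, combining the two inequalities of Step 1, gives
\[
\varphi(h,s)\le \frac{\mathrm C}{(t-s)(h-k)^{\alpha}}\,\varphi(k,t)^{1+\alpha}.
\]
Choose $k_i=k_0+d(1-2^{-i})$ and $r_i=\frac\rho2+\frac\rho{2^{i+1}}$, and set $\psi_i=\varphi(k_i,r_i)$. This gives $\psi_{i+1}\le \mathrm C\,\rho^{-1}d^{-\alpha}\,2^{(1+\alpha)i}\psi_i^{1+\alpha}$. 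Choose
\[
d=\mathrm C_4\,\rho^{-n/2}\,u(k_0,\rho)^{1/2}\left(a(k_0,\rho)/\rho^n\right)^{\alpha/2},
\]
after checking via $\alpha(1+\alpha)n/2=1$ that the powers of $\rho$ match. With this choice the smallness hypothesis of Lemma \ref{pre2} holds with $\mathrm B=2^{1+\alpha}$, so $\psi_i\to0$. Hence $a(k_0+d,\rho/2)=0$, i.e.\ $\sup_{\mathrm B_{\rho/2}}\tilde v\le k_0+d$, which restricts to the claimed bound on $\mathrm B^+_{\rho/2}$.

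\textbf{Main obstacle.} The delicate part is the exponent bookkeeping in Step 2, in particular making the $\rho$-scaling come out exactly as $\rho^{-n}$ inside both factors. The fallback, if Sobolev with $2^*$ causes trouble for $n=2$, is to use any finite exponent and redefine $\alpha$ accordingly.
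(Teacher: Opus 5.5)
Your argument is correct, and at its core it is the same De Giorgi iteration the paper runs: Sobolev--H\"older on $\eta(\tilde v-k)^+$, Lemma~\ref{degeo1} as the Caccioppoli inequality, Chebyshev, the exponent balance $\alpha(\alpha+1)=2/n$, and Lemma~\ref{pre2}. Your $\varphi$ is the square root of the paper's $\psi_i$, which is why you get $\mathrm{B}=2^{1+\alpha}$ instead of $4^{1+\alpha}$. The genuine difference is how you handle the nonhomogeneity. The paper shifts to $k_0=0$ and uses Remark~\ref{scaling} to take $\rho=3/4$ and normalize the right-hand side to $1$. It then iterates only over levels in $[d,2d]$ with $d\ge 1$, so that $\tilde d\ge 1$ fixes $\mathrm{C}_2$ and the Caccioppoli factor independently of the level and of $k_0$. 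You never normalize. Instead you use that the one-step inequality is homogeneous of degree one in $v$, and choose $d$ directly proportional to the right-hand side; your check $\rho^{-1/\alpha}=\rho^{-n(1+\alpha)/2}$ is exactly what makes this consistent. You then get level-uniformity from the monotonicity of the Lemma~\ref{degeo1} constants in $k\ge k_0$. This buys transparency: in this nonhomogeneous setting, normalizing $u\mapsto u/K$ replaces $g$ by $g^*$ and rescales $\|u\|_{L^\infty}$, hence $\mathrm{C}_3$, a dependence the paper's reduction does not track. The price is that your $\mathrm{C}_4$ depends on $k_0$ through $\tilde k_0$ and degenerates as $k_0\to 0$; it ``depends only on $\mathrm{C}_2,\mathrm{C}_3$'' only if these are read at level $k_0$. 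This is harmless in the proof of Theorem~\ref{T1}, where $k_0=\mathrm{M}_i k$ with $\mathrm{M}_i\ge \mathrm{C}_5/\sqrt{n-1}$ and $k$ is at least the starting level of Lemma~\ref{degeo2}. It is still worth stating, because there $\mathrm{C}_0$ is fixed after $\mathrm{C}_4$ while $k$ depends on $\mathrm{C}_0$. Two small points. First, the case $d=0$ is trivial, and $\psi_i\to 0$ gives $\int_{\mathrm{B}_{\rho/2}}((\tilde v-k_0-d)^+)^2\cdot|\{\tilde v\ge k_0+d\}\cap \mathrm{B}_{\rho/2}|^{\alpha}=0$, which yields $\tilde v\le k_0+d$ a.e.\ in either case. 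Second, for $n=2$ you need not change $\alpha$: the bound $\|f\|_{L^2}\le C\|\nabla f\|_{L^1}\le C|\operatorname{supp} f|^{1/2}\|\nabla f\|_{L^2}$ gives the same factor $|\cdot|^{2/n}$ (the paper's exponent $2n/(n-2)$ silently excludes this case).
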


\begin{proof}
  First, observe that the function $\tilde{v}-k_0$ satisfies equation \eqref{div} for all test functions supported in the set  $\{\tilde{v} \geq k_0\}\cap B_{3/4}.$
  Therefore, by considering $v-k_0$ instead of $v$, we may assume without loss of generality that $k_0 = 0$.
  Furthermore, up to scaling and normalization arguments as discussed in Remark \ref{scaling}, we may, without loss of generality, assume that $\rho=3/4,$  and
  \begin{equation}\label{Eq9}
      \left(\int_{\mathrm{B}_{3/4}^+}(v^+)^2 \right)^{\frac{1}{2}}|\{v\geq 0\}\cap \mathrm{B}_{3/4}^+|^{\frac{\alpha}{2}} = 1.
  \end{equation}
Now, let $\frac{1}{2}<\sigma<\tau\leq \frac{3}{4}$ and $\eta\in \mathrm{C}_0^{\infty}(\mathrm{B}_{\frac{\sigma+\tau}{2}})$ such that
  $$0\leq \eta\leq 1,\quad \eta\equiv 1\,\,\,\,\text{in}\,\,\,\mathrm{B}_\sigma,\quad\text{and}\quad ||\nabla\eta||_{L^{\infty}} \leq \frac{4}{\tau-\sigma}.  $$
  Considering $\tilde{v}$ the extension of $v,$ we define $w=\eta(\tilde{v}-k)^+$ for $k>0,$ then we have that $\operatorname{supp}(w)\subset \mathrm{A}(k)\cap \mathrm{B}_{\frac{\sigma+\tau}{2}}$ and
  \begin{align*}
      \int_{\mathrm{A}(k)\cap \mathrm{B}_\sigma}((\tilde{v}-k)^+)^2 &\leq \int_{\mathrm{A}(k)\cap \mathrm{B}_{\frac{\sigma+\tau}{2}}}w^2 \leq \left(\int_{\mathrm{A}(k)\cap \mathrm{B}_{\frac{\sigma+\tau}{2}}}w^{\frac{2n}{n-2}} \right)^{\frac{n-2}{n}}|\mathrm{A}(k)\cap \mathrm{B}_{\frac{\sigma+\tau}{2}}|^{\frac{2}{n}}\\
      &\leq \mathrm{C}(n)\left(\int_{\mathrm{A}(k)\cap \mathrm{B}_{\frac{\sigma+\tau}{2}}}|\nabla w|^2\right)|\mathrm{A}(k)\cap \mathrm{B}_{\frac{\sigma+\tau}{2}}|^{\frac{2}{n}}.
  \end{align*}
  Since for $0<h\leq k$ we have $\mathrm{A}(k)\subset \mathrm{A}(h),$ then
  \begin{equation}\label{Eq5}
     \int_{\mathrm{A}(k)\cap \mathrm{B}_\sigma}((\tilde{v}-k)^+)^2 \leq \mathrm{C}(n)\left(\int_{\mathrm{A}(h)\cap \mathrm{B}_{\frac{\sigma+\tau}{2}}}|\nabla w|^2\right)|\mathrm{A}(k)\cap \mathrm{B}_{\frac{\sigma+\tau}{2}}|^{\frac{2}{n}}. 
  \end{equation}
  For the first term in the right-hand side, we have
  \begin{equation*}
      \int_{\mathrm{A}(h)\cap \mathrm{B}_{\frac{\sigma+\tau}{2}}}|\nabla w|^2 \leq \frac{8}{(\tau-\sigma)^2}\int_{\mathrm{A}(h)\cap \mathrm{B}_{\frac{\sigma+\tau}{2}}}((\tilde{v}-h)^+)^2 + 2\int_{\mathrm{A}(h)\cap \mathrm{B}_{\frac{\sigma+\tau}{2}}}|\nabla\tilde{v}|^2.
  \end{equation*}
  Since $\tilde{v}$ is the extension of $v$ and satisfies \eqref{Eq7} we get
  \begin{equation}\label{Eq8}
      \int_{\mathrm{A}(h)\cap \mathrm{B}_{\frac{\sigma+\tau}{2}}}|\nabla w|^2 \leq \frac{16}{(\tau-\sigma)^2}\int_{\mathrm{A}(h)\cap \mathrm{B}_{\frac{\sigma+\tau}{2}}^+}((v-h)^+)^2 + 4\int_{\mathrm{A}(h)\cap \mathrm{B}_{\frac{\sigma+\tau}{2}}^+}|\nabla v|^2.
  \end{equation}
  Now, let $d>0$ be a constant to be determined later, and consider $k\geq h>0$ such that $d\leq h\leq k\leq 2d.$ Applying the Lemma \ref{degeo1} in this configuration we have
  \begin{equation}
      \int_{\mathrm{A}(h)\cap \mathrm{B}_{\frac{\sigma+\tau}{2}}^+}|\nabla v|^2 \leq \frac{4\mathrm{C}_2}{(\tau-\sigma)^2}\frac{2^{g_0+2}\tilde{d}\mathrm{C}_3+ \mathrm{C}_3^2}{\tilde{d}^2}\int_{\mathrm{A}(h)\cap \mathrm{B}_\tau^+}((v-h)^+)^2,
  \end{equation}
 where $\tilde{d}=\min\{d^{1+g_0},d^{1+\delta_0}\}$ and the constant $\mathrm{C}_2$ in given in Lemma \ref{degeo1}
  for $\lambda\in (0,1)$ such that $\frac{\mathrm{C}_3}{\tilde{d}+\mathrm{C}_3}\lambda^{-2}<1.$
 If we assume that $d\geq 1$ then it is sufficient take $\lambda\in (0,1)$ such that $\frac{\mathrm{C}_3}{1+\mathrm{C}_3}\lambda^{-2}<1,$ and in this case for 
 $$\mathrm{C}_2=(1-\lambda)^{-2}\left(1-\frac{\mathrm{C}_3}{\mathrm{C}_3+1}\lambda^{-2} \right)^{-1}, $$
 we have
 \begin{equation}\label{Eq6}
 \int_{\mathrm{A}(h)\cap \mathrm{B}_{\frac{\sigma+\tau}{2}}^+}|\nabla v|^2 \leq \frac{4\mathrm{C}_2}{(\tau-\sigma)^2}\frac{2^{g_0+2}\mathrm{C}_3+ \mathrm{C}_3^2}{\tilde{d}}\int_{\mathrm{A}(h)\cap \mathrm{B}_\tau^+}((v-h)^+)^2.
 \end{equation}
  Using the estimate \eqref{Eq6} in \eqref{Eq8} we get
 $$ \int_{\mathrm{A}(h)\cap \mathrm{B}_{\frac{\sigma+\tau}{2}}}|\nabla w|^2 \leq \frac{1}{(\tau-\sigma)^2}\left[\frac{16\tilde{d}+ 16\mathrm{C}_2(2^{g_0+2}\mathrm{C}_3+\mathrm{C}_3^2) }{\tilde{d}} \right]\int_{\mathrm{A}(h)\cap \mathrm{B}_\tau^+}((v-h)^+)^2. $$
 Then by \eqref{Eq5} we have
 \begin{equation*}\label{ALt1}
 \int_{\mathrm{A}(k)\cap \mathrm{B}_\sigma^+}((v-k)^+)^2 \leq \frac{\mathrm{C}(n)}{(\tau-\sigma)^2}\left[\frac{8\tilde{d}+ 8\mathrm{C}_2(2^{g_0+2}\mathrm{C}_3+\mathrm{C}_3^2) }{\tilde{d}}\right]|\mathrm{A}(k)\cap \mathrm{B}_{\frac{\sigma+\tau}{2}}|^{\frac{2}{n}}\int_{\mathrm{A}(h)\cap \mathrm{B}_\tau^+}((v-h)^+)^2.
 \end{equation*}
On the other hand, since
 $$|\mathrm{A}(k)\cap \mathrm{B}_\tau^+|^\alpha \leq \frac{1}{(k-h)^{2\alpha}}\left(\int_{\mathrm{A}(h)\cap \mathrm{B}_\tau^+}((v-h)^+)^2 \right)^\alpha, $$
 multiplying this inequality with \eqref{Eq7} and using that $\tilde{d}=d^{1+\delta_0}$ (recall that \(d\geq 1\)) we get the following
 \begin{equation}\label{ALT2}
    |\mathrm{A}(k)\cap \mathrm{B}_\tau^+|^\alpha\int_{\mathrm{A}(k)\cap \mathrm{B}_\sigma^+}((v-k)^+)^2 \leq  
 \frac{\mathrm{C}(n)\bar{\mathrm{C}}|\mathrm{A}(k)\cap \mathrm{B}_{\frac{\sigma+\tau}{2}}|^{\frac{2}{n}}}{(\tau-\sigma)^2(k-h)^{2\alpha}}\left(\int_{\mathrm{A}(h)\cap \mathrm{B}_\tau^+}((v-h)^+)^2\right)^{1+\alpha},
 \end{equation}
 where
 \[
\bar{\mathrm{C}}=\left[\frac{8d^{1+\delta_0}+ 8\mathrm{C}_2(2^{g_0+2}\mathrm{C}_3+\mathrm{C}_3^2) }{d^{1+\delta_0}} \right].
 \]
 Now let $k_i=2d(1-2^{-i}),\,\,\,\sigma_i=\frac{1}{2}(1+2^{-i})$ for all $i\in \mathbb{N},$ and
 $$\psi_i= |\mathrm{A}(k_i)\cap \mathrm{B}_{\sigma_i}^+|^\alpha\int_{\mathrm{A}(k_i)\cap \mathrm{B}_{\sigma_i}^+}((v-k_i)^+)^2. $$
 Then, inequality \eqref{ALT2} implies, with $\sigma=\sigma_{i+1},\,\,\tau=\sigma_i,\,\,k=k_{i+1},\,\,h=k_i$ and $\alpha^2+\alpha=\frac{2}{n},$ that

 \begin{equation*}
\psi_{i+1} \leq 16(4^{1+\alpha})^i\mathrm{C}(n)\frac{8d^{1+\delta_0} + 8\mathrm{C}_2(2^{g_0+2}\mathrm{C}_3+ \mathrm{C}_3^2)}{d^{1+\delta_0 +2\alpha}}\cdot\psi_i.
 \end{equation*}
 By increasing the constant $\mathrm{C}(n)$ and using the that $d\geq 1$, it follows that
 \begin{equation}
\psi_{i+1}\leq \mathrm{C}(n)\frac{1+ \mathrm{C}_2(2^{g_0+2}\mathrm{C}_3+\mathrm{C}_3^2)}{d^{2\alpha}}(4^{1+\alpha})^i\cdot\psi_i.  
 \end{equation}
 Next, with the aim of applying Lemma \ref{pre2}, we take $\mathrm{B}=4^{1+\alpha}$ and
 $$\mathrm{C}=\mathrm{C}(n)\frac{1+ \mathrm{C}_2(2^{g_0+2}\mathrm{C}_3+\mathrm{C}_3^2)}{d^{2\alpha}},$$
 with this, we require that
 $$\psi_1 \leq \mathrm{C}^{-\frac{1}{\alpha}}\mathrm{B}^{-\frac{1+\alpha}{\alpha^2}}. $$
 Since hypothesis \eqref{Eq9} ensures that $\psi_1\leq 1,$ It suffices to have
  $$1 \leq \mathrm{C}^{-\frac{1}{\alpha}}\mathrm{B}^{-\frac{1+\alpha}{\alpha^2}}.$$
 Therefore, the constant $d$ is determined by taking
 $$ d=\max\left\lbrace 1,\,\left(\mathrm{C}(n)(1+ \mathrm{C}_2(2^{g_0+2}\mathrm{C}_3+ \mathrm{C}_3^2))\right)^{\frac{1}{2\alpha}}\cdot 4^{\left(\frac{1+\alpha}{\sqrt{2}\alpha}\right)^2} \right\rbrace. $$
 Finally, it follows from Lemma \ref{pre2} that 
  $$\lim_{i\to +\infty}\psi_i=0, $$
 and hence 
 $$v\leq 2d\quad\text{a.e.}\quad\text{in}\,\,\mathrm{B}_{\frac{1}{2}}^+. $$
 This ends the proof.
\end{proof}

\section{Hölder regularity of the gradient: Proof of Theorem \ref{T1}}\label{Section5}

With the foundational results in place, the next step involves a finer analysis of the region where the partial derivatives exhibit small magnitude, which is crucial for applying our estimates.

 \begin{lemma}\label{Lch}
  Let \(u\) be a minimizer in \(\mathrm{B}^{+}_{1}\) such that  \(\displaystyle\sup_{\mathrm{B}^{+}_{1/2}}|\nabla u| = 1\). Moreover, assume that the origin is a boundary point of the contact set, that is, \(0 \in \partial \{u = 0\} \cap \mathrm{T}_{1/2}\). Then the following statements hold:
 \begin{itemize}
  \item [(i)] There exists a positive constant \(\mathrm{C}_{5}\) such that 
\[
   \sup_{\mathrm{B}_{1/2}^{+}}|\nabla' u|\geq\mathrm{C}_{5}.
\]
 \item[(ii)] There exists a positive constant \(\mathrm{C}_{6}\) such that for any \(i=1,\ldots,n-1\), if 
 \[
 \displaystyle\sup_{\mathrm{B}^{+}_{1/2}}u_{x_{i}}=:\mathrm{M}_{i}\geq \frac{\mathrm{C}_{5}}{\sqrt{n-1}},
 \] 
 then
\[
 \int_{\mathrm{B}^{+}_{1/2}}|u_{x_{i}}-\mathrm{M}_{i}|dx\geq \mathrm{C}_{6}.
\]
 Moreover, the same result holds for \(-u\).
  \end{itemize}
 \end{lemma}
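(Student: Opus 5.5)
Both parts will be proved by contradiction and compactness, in the style of \cite[Lemma 4.1]{AM}. The compactness comes from Proposition \ref{Lipschitzregularity} together with the interior and boundary $C^{1,\alpha}$ theory for $g$-harmonic functions. The non-homogeneity of $g$ is handled through the normalization of Remark \ref{scaling}. Since $\sup_{\mathrm B_{1/2}^+}|\nabla u|=1$ fixes the gradient scale, the family of admissible $g$ can be replaced by $g^*(t)=g(Kt)/g(K)$. This family still satisfies ({\bf H1}) with the same $\delta_0,g_0$. By the two-sided growth bound it is locally uniformly bounded and equicontinuous on compact subsets of $[0,\infty)$. Passing to a subsequence, $g_j\to g_\infty$ locally uniformly, and $g_\infty$ again satisfies ({\bf H1}) in integrated form, i.e.\ it still obeys the two-sided growth control. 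Hence all limits are minimizers of a thin obstacle problem of the same class.

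\textbf{Part (i).} Suppose there are minimizers $u_j$ (possibly for $g_j$) with $\sup_{\mathrm B_{1/2}^+}|\nabla u_j|=1$, $0\in\partial\{u_j=0\}\cap \mathrm T_{1/2}$, and $\sup_{\mathrm B_{1/2}^+}|\nabla' u_j|\to 0$. Because $u_j(0)=0$ and $|\nabla u_j|\le 1$, the sequence $u_j$ is bounded in $C^{0,1}(\overline{\mathrm B_{1/2}^+})$. Extract $u_j\to u_\infty$ uniformly and weakly in $W^{1,G}$. The limit has $\nabla' u_\infty\equiv 0$, so $u_\infty=u_\infty(x_n)$ is one-dimensional. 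A one-dimensional $g_\infty$-harmonic function in $\{x_n>0\}$ has $g_\infty(|u'|)\,\mathrm{sgn}(u')$ constant, hence is affine: $u_\infty=a x_n$ with $u_\infty(0)=0$.

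The main difficulty is to show that $a\neq0$, i.e.\ that the normalization survives in the limit. We need $\nabla u_j\to\nabla u_\infty$ uniformly, or at least strongly, on a set where $|\nabla u_j|$ is near $1$. Since $|\nabla'u_j|\to0$, that set is where $|\partial_{x_n}u_j|\approx 1$.

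\emph{Strategy for $a\neq0$.} First I would try uniform $C^{1,\alpha}$ bounds for $u_j$ up to $\mathrm T$ away from the free boundary, combined with the fact that $\partial_{x_n}u_j$ is monotone in the appropriate sense. If that fails, I would use the Signorini conditions directly. On $\mathrm T$ we have $\partial_{x_n}u_j\le0$ (weakly) and $u_j\ge0$. Then $a x_n\ge 0$ gives $a\ge0$, while the sign condition on $\partial_{x_n}u$ gives $a\le 0$, so $a=0$ and $u_\infty\equiv0$. To reach a contradiction, pick points $y_j\in\overline{\mathrm B_{1/2}^+}$ with $|\partial_{x_n}u_j(y_j)|\ge 1/2$. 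Interior $C^{1,\alpha}$ estimates for $g$-harmonic functions \cite{BM21} (when $y_j$ stays away from $\mathrm T$), or the boundary regularity of the obstacle reformulation of Lemma \ref{obst} near $\mathrm T$ (the difficult case), then force $u_j$ to have oscillation of order $\mathrm{dist}$ near $y_j$. This contradicts $u_j\to0$ uniformly. Because $0$ is a free boundary point, $u_j$ cannot be $a_jx_n$ with $a_j$ bounded below. So the remaining delicate issue is gradient mass concentrating on $\mathrm T$, and I would handle it using the semiconvexity-type control of $\partial_{x_n}u$ coming from the thin obstacle structure.

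\textbf{Part (ii).} Argue by contradiction again. Take $M_{i,j}\ge \mathrm C_5/\sqrt{n-1}$ and $\|u_{j,x_i}-M_{i,j}\|_{L^1(\mathrm B_{1/2}^+)}\to0$. The compactness above gives a limit $u_\infty$ with $u_{\infty,x_i}\equiv M:=\lim M_{i,j}\ge \mathrm C_5/\sqrt{n-1}>0$. On the other hand, $0\in\partial\{u_j=0\}\cap\mathrm T$, so every $u_j$ has zeros of its trace on $\mathrm T$ arbitrarily close to $0$. The trace of $u_\infty$ is therefore nonnegative, vanishes at $0$, and by uniform convergence equals $Mx_i+c(\hat x)$ along $\mathrm T$. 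This is strictly negative at $x=-\epsilon e_i$, a contradiction with $u_\infty\ge0$ on $\mathrm T$. The $L^1$ convergence of $u_{j,x_i}$ transfers to uniform control of the trace via the Lipschitz bound and Fubini along lines parallel to $e_i$, so no gradient compactness is needed here.

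The statement for $-u$ follows by the same argument with $v=-u_{x_i}$. The relevant sign information is again $u\ge0$ on $\mathrm T$ together with $0$ being a free boundary point, which gives contact points on both sides in the $x_i$-direction.
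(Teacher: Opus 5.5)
Your overall strategy (contradiction plus compactness) matches the paper's. Your part (ii) is correct and is the paper's argument. The limit satisfies $\partial_{x_i}u_\infty\equiv M>0$, so it is affine in $x_i$, while its trace on $\mathrm{T}_{1/2}$ is nonnegative and vanishes at $0$. The paper phrases this as the tangential gradient of the trace vanishing at its minimum point $0$. You evaluate the affine limit at $-\epsilon e_i$ (at $+\epsilon e_i$ for $-u$), which is the same thing; no ``contact points on both sides'' are needed. Your remark about the rescaling is a genuine point the paper passes over: $v=u(\rho x)/(\rho\mathrm{K})$ replaces $g$ by $g^*(t)=g(\mathrm{K}t)/g(\mathrm{K})$, so the contradiction sequence should be allowed to vary $g$ with $\delta_0,g_0$ fixed.

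Part (i), however, has a genuine gap. The step ``$u_j\ge0$ on $\mathrm{T}$, so $a x_n\ge 0$, so $a\ge0$'' is wrong. The constraint holds only on $\{x_n=0\}$, where $a x_n$ vanishes identically, so it says nothing about $a$.

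In fact $u=-x_n$ (with $\varphi=-x_n$) is an honest minimizer for every admissible $g$. It satisfies all the Signorini conditions, with $|\nabla u|\equiv 1$ and $\nabla' u\equiv 0$. It fails only the hypothesis that $0$ is a free boundary point, since its contact set is all of $\mathrm{T}_1$. Hence no argument ending in ``$a=0$, contradicting the normalization'' can work unless it uses $0\in\partial\{u_j=0\}$. Your proposal never actually uses this: the sentence ``because $0$ is a free boundary point, $u_j$ cannot be $a_jx_n$'' is exactly what has to be proved.

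The paper goes the other way:
\begin{itemize}
\item it takes $D\neq 0$ from the normalization;
\item it obtains $D<0$ from the flux condition $\frac{g(|\nabla u^k|)}{|\nabla u^k|}u^k_{x_n}\le 0$ on $\mathrm{T}_{1/2}$, tested with $\phi\ge 0$ and passed to the limit;
\item it then tries to contradict $0\in\partial\{u^k=0\}$ by showing $u^k\equiv 0$ on $\mathrm{T}_{1/2}$ for large $k$.
\end{itemize}
That last step is where the real work lies. Uniform convergence to $Dx_n$ only gives $u^k<0$ on $\{x_n\ge\eta\}$ for $k\ge k(\eta)$, not up to $\mathrm{T}$.

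The tools you hold in reserve for the ``difficult case'' near $\mathrm{T}$ are not available here. Uniform $C^{1,\alpha}$ bounds up to $\mathrm{T}$ are precisely what the paper is trying to prove. No semiconvexity-type control of $\partial_{x_n}u$ for the $g$-Laplacian is provided by anything established in the paper.
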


\begin{proof}
 We emphasize that the convergence arguments in both (i) and (ii) holds up to passing to a subsequence. Each of the statements (i) and (ii) will be proven through a \textit{reductio ad absurdum} argument. We begin by proving item (i). Suppose that there exists a sequence \((u^{k})_{k\in\mathbb{N}}\) of minimizers in the class $\mathcal{G}$ such that \(\displaystyle\sup_{\mathrm{B}^{+}_{1/2}}|\nabla u^{k}|=1\) and \(0\in \partial\{u^{k}=0\}\cap \mathrm{T}_{1/2}\), however,
 \begin{eqnarray}\label{convgrad'}
  \sup_{\mathrm{B}^{+}_{1/2}}|\nabla' u^{k}|\to 0\,\,\, \text{as}\,\,\, k\to \infty.
 \end{eqnarray}
 As \(0\) is a point of contact, the Lipschitz regularity of minimizers (Proposition \ref{Lipschitzregularity}) implies that \(u^{k}(0)=0\) and 
\[
\mathrm{c}_{0}\leq\|u^{k}\|_{C^{0,1}(\mathrm{B}^{+}_{1/2})}\leq \mathrm{C_1}\displaystyle\sup_{\mathrm{B}^{+}_{1/2}}|\nabla u^{k}|=\mathrm{C_1},
\]
 for constants \(\mathrm{C_1},\mathrm{c}_{0}>0\), independents of \(k\), where the existence of \(\mathrm{c}_{0}>0\) is ensured too, by the condition \(\sup_{\mathrm{B}^{+}_{1/2}}|\nabla u^{k}|=1\). 

 By considering the even extension of each function, we obtain a sequence $(\tilde{u}^k)_{k\in \mathbb{N}}$ in the class $\mathcal{\tilde{G}},$ which, as previously noted in \eqref{funcionaldoobstaculo}, are minimizers of the functional $\mathcal{\tilde{J}}.$ Since $\tilde{\varphi}\in \tilde{\mathcal{G}},$  it follows that $\tilde{\mathcal{J}}(\tilde{u}^k)\leq \tilde{\mathcal{J}}(\tilde{\varphi})$ for all $k\in \mathbb{N},$ and by Lemma \ref{poincareinequality}, we have
  $$\int_{\mathrm{B}_{1}}G(|\tilde{u}^k-\tilde{\varphi}|)\leq \int_{\mathrm{B}_{1}}G(\mathrm{C}|\nabla(\tilde{u}^k-\tilde{\varphi})|). $$
 Thus there exists a constant $\mathrm{C}>0$ such that for all $k\in \mathbb{N}$
 $$\int_{\mathrm{B}_{1}}G(|\tilde{u}^k|) + G(|\nabla \tilde{u}^k|)\,dx \leq \mathrm{C}. $$
 Therefore, since each $\tilde{u}_k$ is an even extension of $u_k,$ by Lemma \ref{inclusion}, we see that for all $k\in \mathbb{N}$ we have $||u^k||_{W^{1,G}(\mathrm{B}_{1}^+)}\leq \mathrm{C}.$ Consequently, there exists a function $u_{\infty}\in W^{1,G}(\mathrm{B}_{1}^+)$ such that, up to the extraction of a subsequence $u^k \rightharpoonup u_\infty$ in $W^{1,G}(\mathrm{B}_{1}^+).$ By Remark \eqref{trace}, we have that $u^k \rightharpoonup u_\infty$ in $W^{1,1+\delta_{0}}(\mathrm{B}_{1}^+).$  Moreover, by the compact embedding $W^{1,1+\delta_{0}}(\mathrm{B}_{1})\hookrightarrow L^{1+\delta_{0}}(\mathrm{B}_{1})$ we have $u^k\to u_{\infty}$ a.e. in $\mathrm{B}_{1}^+,$ and given that—as discussed in Remark \ref{trace}—the trace operator $T:W^{1,1+\delta_{0}}(\mathrm{B}_{1}^+)\to L^{r}(\partial \mathrm{B}_{1}^+)$ is compact, it follows that $u_\infty=\varphi$ on $(\partial \mathrm{B}_{1})^+$ and $u_\infty\geq 0$ on $\mathrm{T}_1.$ Thus, we conclude that $u_\infty\in \mathcal{G}.$ 
 Now, we observe that the convexity of $G$ implies that
  $$\int_{\mathrm{B}_{1}^+}G(|\nabla u^k|)\,dx \geq  \int_{\mathrm{B}_{1}^+}G(|\nabla u_\infty|)\,dx + \int_{\mathrm{B}_{1}^+}\frac{g(|\nabla u_\infty|)}{|\nabla u_\infty|}\nabla u_\infty\cdot(\nabla u^k- \nabla u_\infty)\,dx, $$
 and since $u^k \rightharpoonup u_\infty$ in $W^{1,G}(\mathrm{B}_{1}^+),$ it follows that
 $$\int_{\mathrm{B}_{1}^+}G(|\nabla u_\infty|)\,dx\leq \liminf_{k\to \infty} \int_{\mathrm{B}_{1}^+}G(|\nabla u^k|)\,dx.$$
  Hence, since $(u^k)_{k\in \mathbb{N}}$ are minimizers in $\mathcal{G},$ it follows that $u_\infty\in \mathcal{G}$ is also a minimizer of the same functional $\mathcal{J}.$ Thus, in particular $u_\infty$ is a $g$-harmonic function in $\mathrm{B}^+_{1/2}$. Moreover, the convergence in \eqref{convgrad'} implies that $\nabla' u_\infty = 0'$ in $\overline{\mathrm{B}_{1/2}^+}$. Hence, it is possible to check that \(u_{\infty}(x)=\mathrm{D}\cdot x_{n}\) in $\overline{\mathrm{B}_{1/2}^+},$ for some \(\mathrm{D}\in\mathbb{R}\setminus \{0\}\) since $\sup_{\mathrm{B}_{1/2}^{+}}|\nabla u^{k}|=1$ for all $k$. We claim that $\mathrm{D} < 0$. Indeed,
let us take a smooth nonnegative test function $\phi$ that vanishes in $\mathrm{T}_{1} \setminus \mathrm{T}_{1/2}$. In this case, by the weak formulation of the problem, we
\[
\int_{\mathrm{T}_{1/2}}\frac{g(|\nabla u^{k}|)}{|\nabla u^{k}|}u^{k}_{x_{n}}\phi dS\leq 0.
\]
By Fatou's Lemma, we have that
\[
\frac{g(|\mathrm{D}|)}{|\mathrm
D|}\mathrm{D}\int_{\mathrm{T}_{1/2}}\phi dS\leq \liminf_{k\to \infty}\int_{\mathrm{T}_{1/2}}\frac{g(|\nabla u^{k}|)}{|\nabla u^{k}|}u^{k}_{x_{n}}\phi dS\leq 0,
\]
implying that 
\[
\frac{g(|\mathrm{D}|)}{|\mathrm
D|}\mathrm{D}\leq0,
\]
which implies that \(\mathrm{D}<0\), since $g$ is a nonnegative function and $\mathrm{D}\neq 0$.
  
We now claim that there exists $k_{0}$ sufficiently large such that $u^{k} = 0$ in $\mathrm{T}_{1/2}$ for all $k \geq k_{0}$. Indeed, if $x = 0$, there is nothing to prove. On the other hand, since $u_{\infty}$ is negative in $\mathrm{B}^{+}_{1/2} \setminus \{x_{n} = 0\}$, by the uniform convergence $u^{k} \to u_{\infty}$, there exists some $k_{0} \in \mathbb{N}$ such that $u^{k}(x) \leq 0$ for all $x \in \mathrm{B}^{+}_{1/2} \setminus \{x_{n} = 0\}$ and all $k \geq k_{0}$. Thus, given $x \in \mathrm{T}_{1/2}$, by the continuity of $u^{k}$ for $k \geq k_{0}$, and using the hypothesis that $u^{k}$ is a minimizer of the thin obstacle problem, we obtain
$$
0 \leq u^{k}(x) = \lim_{\substack{y \to x \\ y \in \mathrm{B}^{+}_{1/2}}} u^{k}(y) \leq 0,
$$
and therefore $u^{k} = 0$ in $\mathrm{T}_{1/2}$ for all $k \gg 1$. Contradicting the fact that the origin $0$ lies on the boundary of $\{u^k=0\}.$ This contradiction establishes claim (i).

The next step is to establish item (ii). Assume that (ii) is false, then there exists a sequence of minimizers \((u^{k})_{k\in\mathbb{N}}\) such that
\[
\sup_{\mathrm{B}_{1/2}^+}|\nabla u^k| = 1, \quad 0 \in \partial\left(\{u^k=0\}\right)\cap \mathrm{T}_{1/2},
\]
  and for some \(i \in \{1,\dots,n-1\}\), we have the following
\[
\sup_{\mathrm{B}_{1/2}^+} (u^{k})_{x_{i}} =: \mathrm{M}_i^k \geq \frac{\mathrm{C}_0}{\sqrt{n-1}}, \quad \text{and} \quad \int_{\mathrm{B}_{1/2}^+} |(u^{k})_{x_{i}} - \mathrm{M}_i^k|^2\,dx \to 0.
\]
Observe that for every \(k \in \mathbb{N}\), we have
\[
\frac{\mathrm{C}_0}{\sqrt{n-1}} \leq \mathrm{M}_i^k \leq 1,
\]
so, there exists \(\mathrm{M}_i^0 \in \left[\frac{\mathrm{C}_0}{\sqrt{n-1}}, 1\right]\) such that \(\mathrm{M}_i^k \to \mathrm{M}_i^0\). Then, since
\[
\int_{\mathrm{B}_{1/2}^+} |(u^{k})_{x_{i}} - \mathrm{M}_i^0|^2 \leq 4\int_{\mathrm{B}_{1/2}^+} |(u^{k})_{x_{i}} - \mathrm{M}_i^k|^2 + 4 \int_{\mathrm{B}_{1/2}^+} |\mathrm{M}_i^k - \mathrm{M}_i^0|^2,
\]
it follows that \((u^{k})_{x_{i}} \to \mathrm{M}_i^0\) in \(L^2(\mathrm{B}_{1/2}^+)\). Since \(0 \in \partial(\{u^k = 0\}) \cap \mathrm{T}_{1/2}\), we have \(u^k(0) = 0\). Hence, for every \(x \in \mathrm{B}_{1/2}^+\),
\[
|u^k(x)| \leq \|\nabla u^k\|_{L^\infty(\mathrm{B}_{1/2}^+)}\cdot |x| \leq |x|,
\]
and for all \(k \in \mathbb{N}\),
\begin{equation}\label{lim1}
\|u^k\|_{W^{1,\infty}(\mathrm{B}_{1/2}^+)} \leq \mathrm{C}.
\end{equation}
Thus, for every \(q \in (1,\infty)\), we obtain \(\|u^k\|_{W^{1,q}(\mathrm{B}_{1/2}^+)} \leq \mathrm{C}(n,q)\). By reflexivity, there exists \(u^0 \in W^{1,q}(\mathrm{B}_{1/2}^+)\) such that
\[
u^k \rightharpoonup u^0 \quad \text{in } W^{1,q}(\mathrm{B}_{1/2}^+).
\]
In particular, taking \(q = 2\), the uniqueness of the weak limit yields
\[
(u^{0})_{x_{i}} \equiv \mathrm{M}_i^0.
\]
Observe that each \(u^k \geq 0\) on \(\mathrm{T}_1\). By the Trace Compactness Theorem as in Remark \ref{trace}, we conclude that \(u^0 \geq 0\) on \(\mathrm{T}_{1/2}\) as well. Furthermore, by \eqref{lim1} and the Arzelà–Ascoli theorem, there exists a function \(w\) such that
\[
u^k \to w \quad \text{uniformly in } \mathrm{B}_{1/2}^+.
\]
Since \(u^k(0) = 0\), we deduce \(w(0) = 0\), and by uniqueness of the limit, \(u^0 = w\); in particular, \(u^0(0) = 0\). Let \(v(x') := u^0(x', 0)\). Then \(x' = 0\) is a minimum point of \(v\), hence \(\nabla v(0) = 0\). Thus, the vanishing of \(\nabla' u^0(0)\) implies that \(\partial_i u^0(0) = 0\).
But this is in contradiction with the fact that we had established
\[
 (u^0)_{x_{i}} \equiv \mathrm{M}_i^0 \geq \frac{\mathrm{C}_0}{\sqrt{n-1}} > 0,
\]
Note that the same argument applies to the sequence \((-u^k)_{k \in \mathbb{N}}\), since in that case we also have \(\sup_{\mathrm{B}_{1/2}^+}|\nabla(-u^k)| = 1\), \(0 \in \partial(\{-u^k = 0\}) \cap \mathrm{T}_{1/2}\), and \(-u^k \leq 0\) on \(\mathrm{T}_1\). Hence, the same conclusion holds for \(-u\).
\end{proof}

 The following lemma can be found in \cite{AM}.
\begin{lemma}\label{aux}
Let $\omega, \omega_1,...,\omega_\mathrm{N} \geq 0$ non decreasing functions on an interval $(0, R_0)$ such that for each $R\leq R_0$ there is a function $\omega_k$ satisfying 
$$
\omega(R)\leq \delta\omega_k(R)\quad\text{and}\quad \omega_k(R/2)\leq (1-\lambda)\omega_k(R),
$$
for constants $\delta>0,$ and $\lambda\in (0,1).$ Then,	
$$
\omega(R)\leq \mathrm{C}\left(\frac{R}{R_0}\right)^{\beta},
$$
where $\beta=\beta(\mathrm{N},\delta, \lambda)$ and $\mathrm{C}=\mathrm{C}(\mathrm{N},\delta, \gamma)\max\limits_{1\leq k\leq \mathrm{N}}\omega_k(R_0).$ 
\end{lemma}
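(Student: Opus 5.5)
The plan is to discretize along dyadic radii and track, at each scale, which index $k$ the hypothesis selects. Two monotonicity facts drive the argument. First, each $\omega_k$ that is selected repeatedly accumulates factors $(1-\lambda)$. Second, because $\omega$ is itself non-decreasing, $\omega$ at a small radius is bounded by $\delta\,\omega_{k}$ evaluated at \emph{any} larger dyadic radius, with $k$ the index chosen at that larger radius. The second fact is what lets us avoid assuming any comparability between the different $\omega_k$.

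Set $R_j=2^{-j}R_0$ for $j\ge 1$, and set $M=\max_{1\le k\le \mathrm N}\omega_k(R_0)$. For each $j\ge1$, the hypothesis applied at $R=R_j\le R_0$ gives an index $k(j)\in\{1,\dots,\mathrm N\}$ with
\[
\omega(R_j)\le \delta\,\omega_{k(j)}(R_j),\qquad \omega_{k(j)}(R_{j+1})\le (1-\lambda)\,\omega_{k(j)}(R_j).
\]

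\textbf{Step 1 (decay along the times an index is chosen).} Fix $k$ and let $j_1<j_2<\dots<j_m$ be the indices $j\le J$ with $k(j)=k$. Since $\omega_k$ is non-decreasing and $R_{j_{l+1}}\le R_{j_l+1}$, we get
\[
\omega_k(R_{j_{l+1}})\le \omega_k(R_{j_l+1})\le (1-\lambda)\,\omega_k(R_{j_l}).
\]
Iterating this, and using $\omega_k(R_{j_1})\le\omega_k(R_0)\le M$, yields $\omega_k(R_{j_m})\le (1-\lambda)^{m-1}M$.

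\textbf{Step 2 (pigeonhole and monotonicity of $\omega$).} Among $j=1,\dots,J$ there are $J$ choices distributed among $\mathrm N$ indices. Hence some $k$ is chosen $m\ge J/\mathrm N$ times. Let $j^*=j_m\le J$ be its last occurrence. Since $\omega$ is non-decreasing and $R_J\le R_{j^*}$, Step 1 gives
\[
\omega(R_J)\le \omega(R_{j^*})\le \delta\,\omega_k(R_{j^*})\le \delta(1-\lambda)^{J/\mathrm N-1}M .
\]
This is the key point. We never need the index chosen at $R_J$ itself to be small; it suffices that the index chosen at some earlier scale has decayed. That step is the main obstacle, and monotonicity of $\omega$ is exactly what resolves it.

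\textbf{Step 3 (interpolation to all radii).} Write $(1-\lambda)^{1/\mathrm N}=2^{-\beta}$, that is, $\beta=-\frac{1}{\mathrm N}\log_2(1-\lambda)>0$, which depends only on $\mathrm N$ and $\lambda$. Then Step 2 reads
\[
\omega(R_J)\le \delta(1-\lambda)^{-1}M\,(R_J/R_0)^{\beta}.
\]
Now take any $R\in(0,R_0)$. If $R\le R_1$, choose $J\ge1$ with $R_{J+1}<R\le R_J$. Monotonicity of $\omega$ gives
\[
\omega(R)\le\omega(R_J)\le \delta(1-\lambda)^{-1}M(R_J/R_0)^\beta\le \delta(1-\lambda)^{-1}2^{\beta}M\,(R/R_0)^\beta.
\]
If $R_1<R<R_0$, apply the hypothesis at $R$ directly: $\omega(R)\le\delta\,\omega_k(R)\le \delta M\le \delta\,2^\beta M(R/R_0)^\beta$. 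Combining the two cases gives $\omega(R)\le \mathrm C\,(R/R_0)^\beta$ with $\mathrm C=\delta(1-\lambda)^{-1}2^{\beta}\max_k\omega_k(R_0)$, as claimed.
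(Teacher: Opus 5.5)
Your proof is correct and complete. It rests on three steps, and each one holds. First, you chain the decay of a fixed $\omega_k$ across non-consecutive selections using its monotonicity. Second, the pigeonhole principle shows some index is selected at least $J/\mathrm N$ times among $R_1,\dots,R_J$. Third, the monotonicity of $\omega$ lets you pass from the last selection $R_{j^*}$ down to $R_J$, so you never have to compare different $\omega_k$'s.

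The argument also gives explicit constants: $\beta=-\frac{1}{\mathrm N}\log_2(1-\lambda)$, which is in fact independent of $\delta$, and $\mathrm C=\delta(1-\lambda)^{-1-1/\mathrm N}\max_k\omega_k(R_0)$.

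The paper itself gives no proof of this lemma; it only refers to \cite{AM}. Your dyadic pigeonhole argument therefore makes the statement self-contained.

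Two points are tacit in your write-up, but the statement already presupposes both:
\begin{itemize}
\item The $\omega_k$ must be defined and non-decreasing on $(0,R_0]$ rather than $(0,R_0)$. The statement evaluates $\omega_k(R_0)$, and you use $\omega_k(R)\le\omega_k(R_0)$.
\item The ``$\gamma$'' in the stated dependence of $\mathrm C$ is evidently a typo for $\lambda$, which matches your constant.
\end{itemize}
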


We are now in a position to present the proof of the main result.
\begin{proof}[{\bf Proof of the Theorem \ref{T1}}]
Once the previous results for the operator \(g\)-Laplacian have been established, the argument follows exactly the same strategy as in Theorem 4.3 of \cite{AM}, by combining these ingredients. For the sake of clarity and as a courtesy to the reader, we include the full proof here. Assume that $0\in \partial\left(\{u=0\}\right)\cap \mathrm{T}_{1}$.
  First, we want to show that for each $\rho\in (0, 1/2)$ there is some $i=1,...,n-1$ for which holds 
  $$\sup_{\mathrm{B}_{\rho/4}^+}u_{x_i} \leq (1-\lambda)\sup_{\mathrm{B}_{\rho/2}^+}u_{x_i}\quad\text{or}\quad \sup_{\mathrm{B}_{\rho/4}^+}(-u)_{x_i} \leq (1-\lambda)\sup_{\mathrm{B}_{\rho/2}^+}(-u)_{x_i}, $$
  for an universal constant $\lambda\in (0,1).$ Let's consider the function given by a scaling and a normalization of the minimizer $u,$ i.e
  $$v(x):= \frac{u(\rho x)}{\rho \mathrm{K}}\quad\forall x\in \mathrm{B}_{1}^+, $$
  where $\mathrm{K}=\sup_{\mathrm{B}_{\rho/2}^+}|\nabla u|. $ By the Remark \ref{scaling} $v$ satisfies the hypotheses of the Lemma \ref{Lch} therefore there exists a constant $\mathrm{C}_5>0$ such that
  $$\sup_{\mathrm{B}_{1/2}^+}|\nabla' v|\geq \mathrm{C}_5. $$
  Moreover, there exists a positive constant $\mathrm{C}_6$ such that if for some $i=1,2,..,n-1$ one has
  \begin{equation}\label{Eq12}
      \sup_{\mathrm{B}_{1/2}^+}v_{x_i}\geq \frac{\mathrm{C}_5}{\sqrt{n-1}},
  \end{equation}
  then
  $$\int_{\mathrm{B}_{1/2}^+}|v_{x_i}- \mathrm{M}_{i}|^2 \geq \mathrm{C}_6,\quad\text{where}\,\,\,\mathrm{M}_{i}=\sup_{\mathrm{B}_{1/2}^+}v_{x_i}. $$
 The same is true for $-v.$ Therefore, if we define the set
 $$\mathrm{A}(k)=\{x\in \mathrm{B}_{1}^+\;\, v_{x_i}(x)>k \}, $$
 we get for constants $k_0\in (0, \mathrm{M}_{i})$ sufficiently close to $\mathrm{M}_{i}$ and $\varrho_1\in (0,1)$ sufficiently close to $1,$ we obtain that
  $$|\mathrm{A}(k_0)\cap \mathrm{B}_{1/2}^+| \leq \varrho_1 |\mathrm{B}_{1/2}^+|. $$
 Applying the Lemma \ref{degeo2}, we have for a constant $\mathrm{C}_0$ sufficiently large to be choose later, and a constant $k\in (0,1)$ that depends on $\mathrm{C}_0$ so that  
 \begin{equation}\label{Eq11}
     |\mathrm{A}(k\mathrm{M}_{i})\cap \mathrm{B}_{1/2}^+| \leq \frac{|\mathrm{B}_{1/2}^+|}{\mathrm{C}_0}.
 \end{equation}
 Applying Lemma \ref{degeo3} we get
 \begin{equation}\label{Eq10}
 \sup_{\mathrm{B}_{1/4}^+}v_{x_i} \leq \mathrm{C}_4\left(\int_{\mathrm{B}_{1/2}^+}((v_{x_i}- \mathrm{M}_{i}k)^+)^2\,dx\right)^{1/2}|\mathrm{A}(\mathrm{M}_{i}k)\cap \mathrm{B}_{1/2}^+|^{\frac{\alpha}{2}} + \mathrm{M}_{i}k.
 \end{equation}
 Since $v_{x_i}\leq \mathrm{M}_{i}$ in $\mathrm{B}_{1/2}^+,$ then
  $$\mathrm{C}_4\left(\int_{\mathrm{B}_{1/2}^+}((v_{x_i}- \mathrm{M}_{i}k)^+)^2\,dx\right)^{1/2}\leq \mathrm{C}_4(1-k)\mathrm{M}_{i}|\mathrm{B}_{1/2}^+|^{1/2}. $$
 Combining \eqref{Eq11} and \eqref{Eq10} we have
 $$\sup_{\mathrm{B}_{1/4}^+}v_{x_i} \leq \frac{\mathrm{C}_4|\mathrm{B}_{1/2}^+|^{\frac{1+\alpha}{2}}}{\mathrm{C}_0^{\alpha/2}}(1-k)\mathrm{M}_{i} + \mathrm{M}_{i}k. $$
 Hence, we choose $\mathrm{C}_0$ to be sufficiently large in order to ensure that
 $$\frac{\mathrm{C}_4|\mathrm{B}_{1/2}^+|^{\frac{1+\alpha}{2}}}{\mathrm{C}_0^{\alpha/2}}= \frac{1}{2}. $$
 It follows that
  $$\sup_{\mathrm{B}_{1/4}^+}v_{x_i} \leq (1-\lambda)\cdot\sup_{\mathrm{B}_{1/2}^+}v_{x_i}, $$
 with $\lambda= \frac{1-k}{2}.$ Recalling the definition of $v,$ it follows from \eqref{Eq12} that if, for some $i=1,...,n-1$ the inequality holds 
 $$\sup_{\mathrm{B}_{\rho/2}^+}u_{x_i} \geq \frac{\mathrm{C}_5}{\sqrt{n-1}}\cdot\sup_{\mathrm{B}_{\rho/2}^+}|\nabla u|, $$
 then
 $$\sup_{\mathrm{B}_{\rho/4}^+}u_{x_i} \leq (1-\lambda)\cdot\sup_{\mathrm{B}_{\rho/2}^+}u_{x_i}. $$	
 The same holds for $-u.$

 Now, using Lemma \ref{Lch} we have
  $$\sup_{\mathrm{B}_{\rho/2}^+}|\nabla' u| \geq \mathrm{C}_5\cdot\sup_{\mathrm{B}_{\rho/2}^+}|\nabla u|,$$
 in particular, there exists some $i=1,...,n-1$ such that it holds that
 $$\sup_{\mathrm{B}_{\rho/2}^+}u_{x_i} \geq \frac{\mathrm{C}_5}{\sqrt{n-1}}\cdot\sup_{\mathrm{B}_{\rho/2}^+}|\nabla u|\quad\text{or}\quad \sup_{\mathrm{B}_{\rho/2}^+}(-u)_{x_i} \geq \frac{\mathrm{C}_5}{\sqrt{n-1}}\cdot\sup_{\mathrm{B}_{\rho/2}^+}|\nabla u|.$$
 Hence, from the previous arguments, it follows that
 $$\sup_{\mathrm{B}_{\rho/4}^+}u_{x_i}\leq (1-\lambda)\sup_{\mathrm{B}_{\rho/2}^+}u_{x_i}\quad\text{or}\quad \sup_{\mathrm{B}_{\rho/4}^+}(-u)_{x_i}\leq (1-\lambda)\sup_{\mathrm{B}_{\rho/2}^+}(-u)_{x_i}. $$
In order to apply Lemma \ref{aux}, we choose $\delta= \frac{\sqrt{n-1}}{\mathrm{C}_5},\,\,\omega(\rho)=\sup_{\mathrm{B}_{\rho/2}^+}|\nabla u|,\,\,\omega_k(\rho)=\sup_{\mathrm{B}_{\rho/2}^+}u_{x_k},$ for $1\leq k\leq n-1$ and $\omega_k(\rho)=\sup_{\mathrm{B}_{\rho/2}^+}(-u)_{x_k}$ for $n\leq k\leq 2n-2$. In this case, we can conclude that
 $$\sup_{\mathrm{B}_{\rho}^+(0)}|\nabla u| \leq \mathrm{C}_7\cdot\rho^{\beta}, $$
 where $\mathrm{C}_7:=C(n,\lambda,\delta)\cdot\sup_{\mathrm{B}_{3/4}^+}|\nabla'u|$ and $\beta=\beta(n,\delta, \lambda)>0.$

 Now for the sets $\Sigma:= \mathrm{B}_{\frac{1}{4}}\cap \{x_n=0\}\cap \{u=0\}$ and $\Gamma:=\mathrm{B}_{\frac{1}{4}}\cap \{x_n=0\}\cap \{u>0\},$ we have 
 $$\partial\left(\{u=0\}\right)\cap \mathrm{B}_{\frac{1}{4}}\cap\{x_n=0\}= \overline{\Sigma}\cap \overline{\Gamma}.$$
 Let $x\in \mathrm{B}_{\frac{1}{4}}^+,$ and $\overline{x}\in \overline{\Sigma}\cap \overline{\Gamma}$ such that
 $$\operatorname{dist}\left(x, \overline{\Sigma}\cap \overline{\Gamma}\right)= |x-\overline{x}|. $$
 The same argument used previously is now applied to the half-ball $\mathrm{B}_{\rho}^+(\overline{x})$ with $\rho= |x-\overline{x}| + \epsilon,$ so we get
  \begin{equation}\label{desig prin}
	|\nabla u(x)| \leq \mathrm{C}_7\cdot \operatorname{dist}\left(x, \overline{\Sigma}\cap \overline{\Gamma}\right)^{\beta}\quad\forall x\in \mathrm{B}_{\frac{1}{2}}^+.
  \end{equation}
 Now, for any points $x_1, x_2\in \mathrm{B}_{\frac{1}{4}}^+,$ define
 $$d_i:=\dist\left(x_i, \overline{\Sigma}\cap \overline{\Gamma}\right)\quad\text{for}\quad i=1,2.$$
 Moreover, let $\overline{x_i}\in \overline{\Sigma}\cap\overline{\Gamma}$ such that $d_i= |x_i-\overline{x_i}|$ for $i=1,2.$ Without loss of generality, we may assume that $d_1\leq d_2.$ Applying the triangle inequality, we obtain the following inequalities: 
 $$d_2\leq |x_2-x_1| + d_1.$$ 
 There are three cases to consider in what follows:\\  {\bf 1°Case:} $d_2\leq 4|x_1-x_2|.$  \\
 Under this condition, we may apply  (\ref{desig prin}) to derive the following
		\begin{align*}
			|\nabla u(x_1)-\nabla u(x_2)| &\leq \mathrm{C}_7\left(d_1^{\beta}+ d_2^{\beta}\right)\\			
			&\leq 2^{2\beta+1}\mathrm{C}_7\cdot|x_1-x_2|^{\beta}.
		\end{align*} 
{\bf 2°Case:} $d_1\leq 4|x_1-x_2|\leq d_2.$\\
Following the same approach as in the previous case, it becomes evident that
      $$|\nabla u(x_1)-\nabla u(x_2)|\leq 2^{3\beta +1}\mathrm{C}_7\cdot|x_1-x_2|^{\beta}. $$
 {\bf 3°Case:} $4|x_1-x_2| < d_1\leq d_2.$\\
Observe that, in this case, the ball $\mathrm{B}_{\frac{d_1}{2}}\left(\frac{x_1+x_2}{2}\right)$ cannot intersect both $\Sigma$ and $\Gamma.$ If the ball $\mathrm{B}_{\frac{d_1}{2}}\left(\frac{x_1+x_2}{2}\right)$ intersects $\Sigma,$ we consider the odd reflection of $u$ given by
      	$$
		   	\overline{u}(x',x_n)=
		   	\begin{cases}
		   		u(x',x_n);\quad\text{if}\quad x_n\geq 0\\
		   		-u(x',-x_n);\quad\text{if}\quad x_n\leq 0.\\
		   	\end{cases}
	 	$$
    By the Schwarz Reflection Principle (cf. \cite[Proposition 2.1]{BM21}), we have that $\tilde{u}$ is $g-$harmonic in $\mathrm{B}_{\frac{d_1}{2}}\left(\frac{x_1+x_2}{2}\right)$. In the case where, that ball intersects $\Gamma,$ we consider the even reflection $\tilde{u}$ which was previously used. In this case, it follows that $\tilde{u}$ is $g-$harmonic in $\mathrm{B}_{\frac{d_1}{2}}\left(\frac{x_1+x_2}{2}\right)$. Therefore, up to considering either the odd or even reflection, we have that 
    $$\Delta_gu=0\quad\text{in}\quad \mathrm{B}_{\frac{d_1}{2}}\left(\frac{x_1+x_2}{2}\right).$$
    Now we define the function
    $$w(x):= \frac{u\left(d_1/2\cdot x + \frac{x_1+x_2}{2}\right)}{d_1/2\cdot d_1^\beta}\quad \forall x\in \mathrm{B}_1(0). $$
    Since $w$ is obtained via normalization and scaling as in Remark \ref{scaling}, it follows that $w$ is $g^*-$harmonic in $\mathrm{B}_1(0).$
    Then, the interior $C^{1,\alpha}$ regularity theory for $g$-harmonic functions (cf. \cite[Theorem 1.7]{Lie91}) provides that there exist constants $\mathrm{C}_8, \theta$ depending on $\delta_0, g_0$ and $n$  such that
    \begin{equation}\label{Eq14}
        \operatornamewithlimits{osc}_{\mathrm{B}_{\frac{|x_1-x_2|}{d_1}}}|\nabla w| \leq 4^\theta \mathrm{C}_8\cdot\sup_{\mathrm{B}_{1/4}}|\nabla w|\left(\frac{|x_1-x_2|}{d_1}\right)^\theta.
    \end{equation}
    From inequality \eqref{desig prin}, we see that
     $$\sup_{\mathrm{B}_{\frac{d_1}{4}}\left(\frac{x_1+x_2}{2}\right)}|\nabla u| \leq \mathrm{C}_7(4d_1)^{\beta}. $$
    Applying this estimate in \eqref{Eq14} we get
    $$
    |\nabla u(x_1)- \nabla u(x_2)|\leq 4^{\theta+\beta} \mathrm{C}_8  \mathrm{C}_7 d_1^{\beta}\left(\frac{|x_1-x_2|}{d_1}\right)^\theta. 
    $$
    Therefore, if $\beta\geq \theta$ we obtain the desired estimate; otherwise, since $\frac{|x_1-x_2|}{d_1}<1$ the inequality above implies that
    $$|\nabla u(x_1)- \nabla u(x_2)|\leq 4^{\theta+\beta} \mathrm{C}_8  \mathrm{C}_7|x_1-x_2|^\beta. $$
  Thus, by setting $\gamma=\min\{\beta, \theta\},$  we conclude that in either case the following estimate holds 
  $$|\nabla u(x_1)- \nabla u(x_2)|\leq 4^{\theta+\beta}2^{3\beta +1} \mathrm{C}_8  \mathrm{C}_7|x_1-x_2|^{\gamma}.$$

Finally, observe that the argument presented can be applied at any ball \(\mathrm{B}_r(z) \subset \mathrm{B}_{3/4}\) centered at a point \(z \in \mathrm{B}_{1/2} \cap \{x_n = 0\}\). Using a covering argument along with the $C^{1,\alpha}$ estimate for $g-$harmonic functions, we conclude the result, possibly with new constants larger than the previous ones. 

\end{proof} 

\subsection*{Acknowledgments}
 J. da Silva Bessa was supported by FAPESP-Brazil under Grant No. 2023/18447-3. P. H. da Costa Silva has been supported by CAPES-Brazil under Grant No. 88881.126989/2025-01.

\subsection*{Conflict of interest}

On behalf of all authors, the corresponding author states that there is no conflict of interest

\subsection*{Data availability statement}

Data availability does not apply to this article as no new data were created or analyzed in this study.

\end{document}